\let\oldtocsection=\tocsection
\let\oldtocsubsection=\tocsubsection
\let\oldtocsubsubsection=\tocsubsubsection
\newtheorem{thm}{Theorem}
\newtheorem{rem}{Remark}
\newtheorem{defn}{Definition}
\newtheorem{lemma}[thm]{Lemma}
\newtheorem{prop}[thm]{Proposition}
\newcommand{\R}{\mathbb{R}}
\newcommand{\C}{\mathbb{C}}
\renewcommand{\tocsection}[2]{\hspace{0em}\oldtocsection{#1}{#2}}
\renewcommand{\tocsubsection}[2]{\hspace{2em}\oldtocsubsection{#1}{#2}}
\renewcommand{\tocsubsubsection}[2]{\hspace{5em}\oldtocsubsubsection{#1}{#2}}
\numberwithin{equation}{section}
\begin{document}

\title{Transformations of harmonic bundles and Willmore surfaces}

\author{A. C. Quintino}

\address{A. C. Quintino, CENTRO DE MATEM\'{A}TICA, APLICA\c{C}\~{O}ES FUNDAMENTAIS E INVESTIGA\c{C}\~{A}O OPERACIONAL, FACULDADE DE CI\^{E}NCIAS DA UNIVERSIDADE DE LISBOA\\1749-016 LISBOA\\PORTUGAL}
\email{amquintino@ciencias.ulisboa.pt} 

\begin{abstract}
Willmore surfaces are the extremals of the Willmore functional
(possibly under a constraint on the conformal structure). With the
characterization of Willmore surfaces by the (possibly \emph{perturbed}) harmonicity of the mean
curvature sphere congruence \cite{blaschke,burstall+calderbank,ejiri,rigoli}, a
zero-curvature formulation follows \cite{burstall+calderbank}.
Deformations on the level of harmonic maps prove to give rise to
deformations on the level of surfaces, with the definition of a
spectral deformation \cite{burstall+calderbank,SD} and of a
B\"{a}cklund transformation \cite{BQ} of Willmore surfaces into
new ones, with a Bianchi permutability between the two \cite{BQ}. This
text is dedicated to a self-contained account of the topic, from a conformally-invariant viewpoint, in Darboux's light-cone model of the conformal $n$-sphere. 
\end{abstract}

\maketitle

\setcounter{tocdepth}{4}

\tableofcontents

\section{Introduction}

Among the classes of Riemannian submanifolds, there is that of
\textit{Willmore surfaces}, named after T. Willmore \cite{willmore} (1965),
although the topic was mentioned by Blaschke \cite{blaschke}
(1929) and by Thomsen \cite{thomsen} (1923), as a variational
problem of optimal geometric realization of a given compact surface in $3$-space regarding   the minimization of some natural energy.

Early in the nineteenth century, Germain \cite{germain1, germain2} studied elastic surfaces. On her pioneering
analysis, she claimed that the elastic force of a thin plate is
proportional to its mean curvature, $H=(k_{1}+k_{2})/2$, for $k_{1}$ and $k_{2}$ the maximum and minimum curvatures among all intersections of the
surface with perpendicular planes, at each point. Since then, the mean curvature
remains a key concept in the theory of elasticity. 

In modern literature on the elasticity of membranes, a weighted sum of the total mean
curvature, the total squared mean curvature and the total Gaussian
curvature is considered the elastic energy of a membrane. By
neglecting the total mean curvature, by physical considerations, and
having in consideration the Gauss-Bonnet Theorem, T. Willmore defined
the \textit{Willmore energy} of a compact oriented surface
$\Sigma$, without boundary, isometrically immersed in $\mathbb R^{3}$ to be $$\mathcal{W}=\int
_{\Sigma}H^{2}dA,$$ averaging the mean curvature square over the
surface. 

From the perspective of energy extremals, the Willmore functional may be extended to isometric immersions $\phi$ of compact oriented surfaces $\Sigma$ in a
general Riemannian manifold $M$ of constant sectional
curvature by means of half (or any other scale) of the total squared norm of
$$\Pi _{0}=\Pi
-g_{\phi}\otimes\mathcal{H},$$
the trace-free part of the second fundamental form $\Pi$, for $\mathcal{H}=\frac{1}{2}\mathrm{tr}(\Pi)$, the mean curvature vector, and $g_{\phi}$ the metric induced in $\Sigma$ by $\phi$. 
In fact, given $(X_{i})_{i=1,2}$ a local orthonormal frame of
$T\Sigma$, the Gauss equation, relating the curvature tensors of $\Sigma$ and $M$, establishes, in particular, 
$$K-\hat{K}=(\Pi(X_{1},X_{1}),\Pi(X_{2},X_{2})-(\Pi(X_{1},X_{2}),\Pi(X_{1},X_{2}),$$
for $K$ the Gaussian curvature of $\Sigma$
and $\hat{K}$ the sectional curvature of $M$, 
and, therefore,
$$|\Pi _{0}|^2=\sum _{i,j}(
\Pi _{0}(X_{i},X_{j}),\Pi
_{0}(X_{i},X_{j}))=2(|\mathcal{H}|^2-K+\hat{K}).$$Hence, for the particular case of surfaces in $\R^{3}$, the two functionals 
share critical points. 

Willmore surfaces are the extremals of the Willmore
functional. \textit{Constrained Willmore surfaces} appear as the
generalization of Willmore surfaces that arises when we consider
extremals of the Willmore functional with respect to \textit{infinitesimally
conformal variations}, rather than with respect to all variations. The Euler-Lagrange equations include a Lagrange multiplier. Willmore surfaces are the constrained Willmore surfaces admitting the zero multiplier.  The
zero multiplier is not necessarily the only multiplier for a
constrained Willmore surface with no constraint on the conformal
structure, though. In fact, the uniqueness of multiplier
characterizes \cite{BQ} non-isothermic constrained Willmore surfaces. Constant mean curvature surfaces in $3$-dimensional space-forms are examples of isothermic constrained Willmore surfaces, as proven by J. Richter \cite{richter}. A classical result by Thomsen \cite{thomsen} characterizes
isothermic Willmore surfaces in $3$-space as minimal surfaces in
some $3$-dimensional space-form. 

It is well-known that the Levi-Civita connection is not a conformal invariant. In fact (see, for example, \cite[Section~3.12]{willmore2}, under a conformal change $g'=e^{2u}g$ of a metric $g$ on $\Sigma$, for some $u\in C^{\infty}(\Sigma,\R)$, the Levi-Civita connections $\nabla$ and $\nabla '$ on $(\Sigma,g)$ and $(\Sigma,g')$, respectively, are related by
$$\nabla'_{X}Y=\nabla_{X}Y+(Xu)Y+(Yu)X-g(X,Y)(du)^{*},$$
for $(du)^{*}$ the
contravariant form of $du$ with respect to $g$, for all $X,Y\in\Gamma(T\Sigma)$. It follows  that, under a conformal change of metric on a Riemannian manifold $M$, the second fundamental form of an isometric immersion $\phi:\Sigma\rightarrow M$ changes according to 
\begin{equation}\label{eq:Pichanges}
\Pi'(X,Y)=\Pi(X,Y)-g_{\phi}(X,Y)\pi_{N_{\phi}}(\phi^{*}(du)^{*}),
\end{equation}
for $\pi_{N_{\phi}}$ the orthogonal projection of the pull-back bundle $\phi^{*}TM$ onto the normal bundle $N_{\phi}=(d\phi(T\Sigma))^{\perp}$ and $\phi^{*}(du)^{*}$ the
pull-back by $\phi$ of $(du)^{*}$; and, therefore, 
$$\mathcal{H}'=e^{-2u\circ\phi}\mathcal{H}-e^{-2u\circ\phi}\pi_{N_{\phi}}(\phi^{*}(du)^{*}),$$
relating the respective mean curvature vectors. 
Hence, under a conformal change of the metric, the trace-free part of the second fundamental form remains invariant, so that its
squared norm and the area element change in an inverse way, leaving
the Willmore energy unchanged. In particular, this establishes the 
class of constrained Willmore surfaces as a conformally-invariant
class.

Conformal invariance motivates us to move from Riemannian to conformal geometry. Our study is one of surfaces in $n$-dimensional space-forms from a conformally-invariant viewpoint. For this, we find a convenient setting in Darboux's light-cone model \cite{darbouxsphere} of the conformal $n$-sphere, viewing the $n$-sphere
not as the round sphere in the Euclidean space $\R^{n+1}$ but as the
celestial sphere in the Lorentzian spacetime $\R^{n+1,1}$. 

A manifestly conformally-invariant formulation of the Willmore energy is presented, following the definition presented by Burstall, Ferus, Leschke, Pedit and Pinkall \cite{BFLPP}, in the
quaternionic setting, for the particular case of $n=4$.

A fundamental construction in conformal geometry of surfaces is the
\textit{mean curvature sphere congruence}, the bundle of $2$-spheres tangent
to the surface and sharing mean curvature vector with it at each
point (although the mean curvature vector is not
conformally-invariant, under a conformal change of the metric it
changes in the same way for the surface and the osculating
$2$-sphere). From the early twentieth century, with the work of Blaschke \cite{blaschke}, the family of mean curvature spheres has been known as the \textit{central sphere
congruence}. Nowadays, after Bryant's paper \cite{bryant}, it goes as well by the name \textit{conformal Gauss map}.

A key result by Blaschke \cite{blaschke} ($n=3$) and, independently, Ejiri \cite{ejiri} and Rigoli \cite{rigoli} (general $n$) characterizes Willmore surfaces by the
harmonicity of the central sphere congruence. The well-developed theory of harmonic maps, and, in particular, the integrable systems approach to these, then applies. The starting point is the fact that, for a map into a Grassmannian, harmonicity amounts to the flatness of a certain family of connections depending on a spectral parameter, according to Uhlenbeck \cite{uhlenbeck}. A zero-curvature characterization of Willmore surfaces follows. This characterization generalizes to constrained Willmore surfaces, as established by Burstall and Calderbank \cite{burstall+calderbank}.

The zero-curvature representation of the harmonic map equations allows
one to deduce two kinds of symmetry: harmonic maps admit a \emph{spectral
deformation} \cite{uhlenbeck}, by exploiting a scaling freedom in the spectral parameter, 
and \emph{B\"{a}cklund transformations}, which
arise by applying chosen gauge transformations to the family of
flat connections, as studied by Terng and Uhlenbeck \cite{terng+uhlenbeck,uhlenbeck}. Aiming to apply this theory to constrained Willmore surfaces, and in order to address the possibly non-harmonic central sphere congruences of constrained
Willmore surfaces, the notion of
\textit{perturbed harmonicity} for a map into a Grassmannian is introduced \cite{BQ}. It applies to the central sphere congruence and it provides a characterization of constrained Willmore surfaces. 

A spectral deformation and B\"{a}cklund transformations of perturbed harmonic maps into new ones are defined \cite{BQ}. Some care is required to see that, when applied to the central sphere congruence of a constrained Willmore surface, each new map still is the central sphere congruence of a surface. Deformations on the level of perturbed harmonic maps prove \cite{BQ} to give rise to deformations on the level of surfaces, with the definition of a spectral deformation and of B\"{a}cklund transformations of constrained Willmore 
surfaces into new ones. This spectral deformation of constrained Willmore surfaces coincides, up to reparametrization, with the one presented by Burstall, Pedit and Pinkall \cite{SD}, in terms of the \emph{Schwarzian derivative} and the \emph{Hopf differential}, later defined by the action of a loop of flat connections, by Burstall and Calderbank \cite{burstall+calderbank}. 

The class of constrained Willmore surfaces is in this way established
as a class of surfaces with strong links to the theory of integrable
systems, admitting a spectral deformation and a B\"{a}cklund transformation, with a Bianchi permutability between the two, as proven in \cite{BQ}. All these transformations corresponding to the zero multiplier preserve the class of Willmore
surfaces. 

The isothermic surface condition is known \cite{SD} to be preserved
under constrained Willmore spectral deformation. As
for B\"{a}cklund transformation of isothermic constrained Willmore surfaces, we
believe it does not necessarily preserve the isothermic condition.
In contrast, the constancy of the mean curvature of a surface in
$3$-dimensional space-form is preserved by both constrained Willmore
spectral deformation, cf. \cite{SD}, and constrained Willmore B\"{a}cklund transformation, cf. \cite{thesis}, for special
choices of parameters, with preservation of both the space-form and
the mean curvature in the latter case. However, constant mean curvature surfaces are not conformally-invariant objects, requiring that we carry a distinguished space-form. This shall be the subject of a forthcoming paper.

\section{Constrained Willmore surfaces and perturbed harmonicity}

Consider $\underline{\C}^{n+2}=\Sigma\times(\R^{n+1,1})^{\C}$
provided with the complex bilinear extension of the metric on
$\underline{\R}^{n+1,1}$. In what follows, we shall make no explicit
distinction between a bundle and its complexification, and move from
real tensors to complex tensors by complex multilinear extension,
with no need for further reference, preserving notation.

Throughout this text, we will consider the identification
$$\wedge^{2}\R^{n+1,1}\cong o(\R^{n+1,1})$$ of the exterior power
$\wedge^{2}\R^{n+1,1}$ with the orthogonal algebra $o(\R^{n+1,1})$
via $$u\wedge v(w):=(u,w)v-(v,w)u$$ for $u,v,w\in\mathbb R^{n+1,1}$. Given $\mu,\eta\in\Omega^{1}(\Sigma\times o(\R^{n+1,1}))$, we use $[\mu\wedge\eta]$ to denote the $2$-form defined from the Lie Bracket $[\,\,,\,]$ in $o(\R^{n+1,1})$:
$$[\mu\wedge\eta]_{(X,Y)}=[\mu_{X},\eta_{Y}]-[\mu_{Y},\eta_{X}],$$for all $X,Y\in\Gamma(T\Sigma)$. We consider the bundle $\mathrm{End}(\underline{\R}^{n+1,1})$, and,
more generally, any bundle of morphisms, provided with the metric
defined by $(\xi, \eta):=\mathrm{tr}(\eta^{t}\xi)$ and we shall move
from a connection on $\underline{\R}^{n+1,1}$ to a connection on
$\mathrm{End}(\underline{\R}^{n+1,1})$ via $\nabla
\xi=\nabla\circ\xi-\xi\circ\nabla$, with preservation of notation.
Note that, in the case of a metric connection $\nabla$ on
$\underline{\R}^{n+1,1}$, we have $$\nabla (u\wedge v)=\nabla u\wedge
v+u\wedge \nabla v,$$ for all $u,v\in\Gamma(\underline{\R}^{n+1,1})$.

Our theory is local and, throughout this text, with no need for
further reference, restriction to a suitable non-empty open set
shall be underlying.

\subsection{Real constrained Willmore surfaces}
\subsubsection{Conformal geometry of the sphere}
Our study is one of surfaces in the conformal $n$-sphere, with
$n\geq 3$, in Darboux's light-cone model \cite{darbouxsphere} of the latter. For this, contemplate the light-cone $\mathcal{L}$ in the Lorentzian vector
space $\R^{n+1,1}$ and its projectivization
$\mathbb{P}(\mathcal{L})$, provided with the conformal structure
defined by a metric $g_{\sigma}$ arising from a never-zero section
$\sigma$ of the tautological bundle
$\pi:\mathcal{L}\rightarrow\mathbb{P}(\mathcal{L})$ via
$$g_{\sigma}(X,Y)=(d\sigma (X), d\sigma(Y)).$$
For $v_{\infty}\in\R^{n+1,1}_{\times}$, set
$$S_{v_{\infty}}:=\{v\in\mathcal{L}:(v,v_{\infty})=-1\},$$ an
$n$-dimensional submanifold $\mathbb R^{n+1,1}$. Given $v\in S_{v_{\infty}}$, 
\begin{equation}\label{eq:TSvinfty}
T_{v}S_{v_{\infty}}=<v,v_{\infty}>^{\perp}.
\end{equation}
The fact that $(v,v_{\infty})\neq 0$ establishes the non-degeneracy of the subspace $<v,v_{\infty}>$ of $\R^{n+1,1}$, establishing a decomposition 
\begin{equation}\label{eq:TSinfty}
\R^{n+1,1}=<v,v_{\infty}>\oplus T_{v}S_{v_{\infty}}.
\end{equation}
In its turn, the nullity of $v$ establishes $<v,v_{\infty}>$ as a $2$-dimensional space with a metric with signature $(1,1)$, showing that $S_{v_{\infty}}$ inherits from
$\R^{n+1,1}$ a positive definite metric. Furthermore: for $v_{\infty}$
non-null, orthoprojection onto $\langle v_{\infty}\rangle^{\perp}$
induces an isometry between $S_{v_{\infty}}$ and $\{v\in\langle
v_{\infty}\rangle^{\perp}:(v,v)=-1/(v_{\infty},v_{\infty})\}$,
whereas, when $v_{\infty}$ is null, for any choice of $v_{0}\in
S_{v_{\infty}}$, orthoprojection onto $\langle
v_{0},v_{\infty}\rangle^{\perp}$ restricts to an isometry of
$S_{v_{\infty}}$. We conclude that $S_{v_{\infty}}$ inherits from
$\R^{n+1,1}$ a positive definite metric of (constant) sectional
curvature $-(v_{\infty},v_{\infty})$, defining a copy of the $n$-sphere, a copy of Euclidean $n$-space or two copies of hyperbolic $n$-space, according to the sign of $(v_{\infty},v_{\infty})$. 

By construction, the bundle projection $\pi$
restricts to give a conformal diffeomorphism
$$\pi_{\vert{S_{v_{\infty}}}}:S_{v_{\infty}}\rightarrow
\mathbb{P}(\mathcal{L})\backslash\mathbb{P}(\mathcal{L}\cap\langle
v_{\infty}\rangle^{\perp}).$$ In particular, choosing $v_{\infty}$
time-like identifies $\mathbb{P}(\mathcal{L})$ with the conformal
$n$-sphere, $$S^{n}\cong\mathbb{P}(\mathcal{L}).$$
This model linearizes 
the conformal geometry of the sphere. For example, $k$-spheres in
$S^{n}$ are identified with
$(k+1,1)$-planes $V$ in $\mathbb R^{n+1,1}$ via
$V\mapsto\mathbb{P}(\mathcal{L}\cap
V)\subset\mathbb{P}(\mathcal{L})$.

\subsubsection{Surfaces in the light-cone picture and central sphere congruence}

For us, a map $\Lambda:\Sigma\rightarrow\mathbb{P}(\mathcal{L})$ is the same as a null line subbundle of the trivial bundle
$\underline{\R}^{n+1,1}=\Sigma\times\R^{n+1,1}$, in the natural way. From this point of view, sections of $\Lambda$ are simply lifts of $\Lambda$ to maps $\Sigma\rightarrow \R^{n+1,1}$. Given such a $\Lambda$, we define
$$\Lambda^{(1)}:=\langle\sigma,d\sigma(T\Sigma)\rangle,$$for
$\sigma$ a lift of $\Lambda$. For further reference, note that $\Lambda$ is an immersion if
and only if the bundle $\Lambda^{(1)}$ has rank $3$.

Let then $\Lambda:\Sigma\rightarrow\mathbb{P}(\mathcal{L})$ be an immersion of an oriented
surface $\Sigma$, which we provide with the conformal structure
$\mathcal{C}_{\Lambda}$ induced by $\Lambda$ and with $J$ the
canonical complex structure (that is, $90^{\circ}$ rotation in the
positive direction in the tangent spaces, a notion that is obviously
invariant under conformal changes of the metric). Observe that every lift of $\sigma:\Sigma\rightarrow \R^{n+1,1}$ of $\Lambda$ is conformal: given $z$ a holomorphic chart of $\Sigma$, $(\sigma_{z},\sigma_{z})=0$ (or, equivalently, $(\sigma_{\bar{z}},\sigma_{\bar{z}})=0$). Set
\begin{equation}\label{eq:Lambdaij}
\Lambda^{1,0}:=\Lambda\oplus
d\sigma(T^{1,0}\Sigma),\,\,\,\,\Lambda^{0,1}:=\Lambda\oplus
d\sigma(T^{0,1}\Sigma),
\end{equation}
independently of the choice of a lift $\sigma$ of $\Lambda$,
defining in this way two complex rank $2$ subbundles of
$\Lambda^{(1)}$, complex conjugate of each other, $$\Lambda^{0,1}=\overline{\Lambda^{1,0}}.$$ The nullity and
conformality of the lifts of $\Lambda$ establish the isotropy of (both) $\Lambda^{1,0}$ (and $\Lambda^{0,1}$), whilst the fact that
$\Lambda$ is an immersion establishes that $\Lambda^{1,0}$ and
$\Lambda^{0,1}$ intersect in $\Lambda$, 
$$\Lambda^{1,0}\cap\Lambda^{0,1}=\Lambda.$$
Let $S:\Sigma\rightarrow \mathcal{G}:=\mathrm{Gr}_{(3,1)}(\mathbb
R^{n+1,1})$ be the \textit{central sphere congruence} of $\Lambda$,
$$S=\Lambda^{(1)}\oplus\langle\triangle\sigma\rangle=\langle\sigma,\sigma_{z},\sigma_{\bar{z}},\sigma_{z\bar{z}}\rangle,$$
for $\sigma$ any lift of $\Lambda$, $\triangle\sigma$ the Laplacian of
$\sigma$ with respect to the metric $g_{\sigma}$ and $z$ a
holomorphic chart of $\Sigma$. We use $\pi_{S}$ and $\pi_{S^{\perp}}$
to denote the orthogonal projections of $\underline{\mathbb R}^{n+1,1}$
onto $S$ and $S^{\perp}$, respectively.

Given $z$ a holomorphic chart of $\Sigma$, let $g_{z}$ denote the metric induced in $\Sigma$
by $z$. Differentiation of $(\sigma, \sigma_{z})=0$ gives $(\sigma,\sigma_{z\bar{z}})=-(\sigma_{z},\sigma_{\bar{z}})$, which the conformality of $z$ proves to be never-zero. In many occasions, it will be useful to consider a
special choice of lift of $\Lambda$, the \textit{normalized} lift
with respect to $z$, the section $\sigma^{z}:\Sigma\rightarrow
\mathcal{L}^{+}$ of $\Lambda$ (given a choice $\mathcal{L}^{+}$ of
one of the two connected components of $\mathcal{L}$) defined by
$g_{\sigma^{z}}=g_{z}$. For further reference, note that this
condition establishes, in particular, that
$(\sigma_{z}^{z},\sigma_{\bar{z}}^{z})$ is constant,
$(\sigma_{z}^{z},\sigma_{\bar{z}}^{z})=\frac{1}{2}$, and, therefore,
\begin{equation}\label{Hillseq}
\pi_{S}\sigma_{zz}^{z}\in\Gamma((\Lambda^{(1)})^{\perp}\cap
S)=\Gamma\Lambda.
\end{equation}

Consider the decomposition of the trivial flat connection $d$ on
$\underline{\mathbb R}^{n+1,1}$ as
$$d=\mathcal{D}\oplus\mathcal{N}$$ for
$\mathcal{D}$ the connection given by the sum of the connections
$\nabla^{S}$ and $\nabla^{S^{\perp}}$ induced on $S$ and
$S^{\perp}$, respectively, by $d$. Note that $\mathcal{D}$ is a
metric connection and, therefore, $\mathcal{N}$ is skew-symmetric,
$$\mathcal{N}\in\Omega^{1}(S\wedge S^{\perp}).$$ Note that, given
$\xi\in\Gamma(S\wedge S^{\perp})$, the transpose of $\xi\vert_{S}$
is $-\xi\vert_{S^{\perp}}$, and define a bundle isomorphism $S\wedge
S^{\perp}\rightarrow \mathrm{Hom}(S,S^{\perp})$ by
$\eta\mapsto\eta\vert_{S}$. Together with the canonical
identification of $S^{*}T\mathcal{G}$ and
$\mathrm{Hom}(S,S^{\perp})$, via $X\mapsto
(\rho\mapsto\pi_{S^{\perp}}(d_{X}\rho))$, this gives an
identification
\begin{equation}\label{eq:fsg}
S^{*}T\mathcal{G}\cong\mathrm{Hom}(S,S^{\perp})\cong S\wedge
S^{\perp},
\end{equation}
of bundles provided with the canonical metrics and connections (for
the connection $\mathcal{D}$ on $\underline{\R}^{n+1,1}$), (see, for
example, \cite{burstall+rawnsley}), which we will consider
throughout. Observe that, under the identification \eqref{eq:fsg}, we have 
\begin{equation}\label{eq:dSmathcalN}
dS=\mathcal{N}.
\end{equation}

We restrict our study to surfaces in $S^{n}$ which are not contained in any subsphere of $S^{n}$. This ensures, in particular, that, given $v_{\infty}\in\R^{n+1,1}$ non-zero, $\Lambda(\Sigma)\nsubseteq \mathbb{P}(\mathcal{L}\cap\langle v_{\infty}\rangle^{\perp})$: if $v_{\infty}$ is space-like, $\mathbb{P}(\mathcal{L}\cap\langle v_{\infty}\rangle^{\perp})$ is an hypersphere in $S^{n}$, whilst, in the case $v_{\infty}$ is time-like or light-like, this is always necessary the case. Hence, given $\sigma$ a lift of $\Lambda$, we have $(\sigma,v_{\infty})\neq 0$ and we define a local immersion 
$$\sigma_{\infty}:=(\pi_{\vert{S_{v_{\infty}}}})^{-1}\circ\Lambda=\frac{-1}{(\sigma,v_{\infty})}\,\sigma:\Sigma\rightarrow S_{v_{\infty}},$$ of $\Sigma$ into the 
space-form $S_{v_{\infty}}$, conformally diffeomorphic to the surface $\Lambda$. The normal  bundle $N_{\infty}$ to $\sigma_{\infty}$ can be identified with the normal bundle $S^{\perp}$ to the central sphere congruence of $\Lambda$, as bundles provided with metrics and connections:

\begin{lemma}\cite{SD}\label{NperpisoSperp}
Let $\mathcal{H}_{\infty}$ denote the mean curvature vector of $\sigma_{\infty}$. Then 
$$\xi\mapsto
\xi+(\xi,\mathcal {H_{\infty}})\sigma _{\infty}$$ defines an isomorphism 
$$\mathcal
{Q}:N_{\infty}\rightarrow S^{\perp},$$ of bundles provided
with a metric and a connection. Furthermore:
\begin{equation}\label{eq:mcvinSperp}
\mathcal
{Q}(\mathcal {H}_{\infty})=-\pi _{S^{\perp}}(v_{\infty}).
\end{equation}
\end{lemma}

\begin{proof}
Let $g_{\infty}$ denote the metric induced in $\Sigma$ by $\sigma_{\infty}$ and $\nabla^{N_{\infty}}$ denote the connection induced in $N_{\infty}$ by the pull-back connection by $\sigma_{\infty}$ of the Levi-Civita connection on $(T\Sigma, g_{\infty})$. According to \eqref{eq:TSvinfty} and \eqref{eq:TSinfty}, the pull-back bundle by $\sigma_{\infty}$ of $T\Sigma$ consists of the orthogonal complement in $\underline {\mathbb{R}}^{n+1,1}$ of the non-degenerate bundle $\langle\sigma _{\infty},v_{\infty}\rangle$, 
$$\sigma_{\infty}^{*}TS_{v_{\infty}}=\langle\sigma _{\infty},v_{\infty}\rangle
^{\perp}.$$
Let $\pi _{N_{\infty}}$ denote the orthogonal
projection of 
$$\underline {\mathbb{R}}^{n+1,1}=d\sigma _{\infty}(T\Sigma)\oplus N
_{\infty}\oplus \langle v_{\infty},\sigma _{\infty}\rangle$$
onto
$N_{\infty}$.  Since the metric in $S_{v_{\infty}}$ is the one
inherited from $\mathbb{R}^{n+1,1}$, the second fundamental form $\Pi _{\infty}$ of $\sigma_{\infty}$ is simply given
by
$$\Pi _{\infty}(X,Y)=\pi _{N_{\infty}}(d_{X}d_{Y}\sigma
_{\infty}),$$ for $X,Y\in \Gamma (T\Sigma)$, so that, given $\xi\in\Gamma
(N_{\infty})$ and $(e_{i})_{i}$ an orthonormal frame of
$(T\Sigma,g_{\infty})$, we have $(\xi,\sum _{i} d_{e_{i}}d_{e_{i}}\sigma
_{\infty})=2(\xi,\mathcal {H}_{\infty})$ and, therefore,
$$(\xi+(\xi,\mathcal {H_{\infty}})\sigma _{\infty},\sum _{i}
d_{e_{i}}d_{e_{i}}\sigma _{\infty})=0.$$ Together with the fact that
$N_{\infty}\subset\langle\sigma _{\infty},v_{\infty}\rangle
^{\perp}$,  this shows that $\xi+(\xi,\mathcal {H_{\infty}})\sigma
_{\infty}$ is, in fact, a section of $S^{\perp}$.

Clearly, $\mathcal {Q}$ is isometric, and, therefore, injective, as
$N_{\infty}$ is non-degenerate. Now
$\mathrm {rank}\, N_{\infty}=n-2=\mathrm {rank}\,S^{\perp}$ shows that $\mathcal
{Q}$ is an isometric isomorphism. Furthermore, given $\xi\in\Gamma
(N_{\infty})$,
$$ \nabla
^{S^{\perp}}(\mathcal {Q}(\xi))=\pi
_{S^{\perp}}(d\xi)+d(\xi,\mathcal {H_{\infty}})\pi
_{S^{\perp}}(\sigma_{\infty})+(\xi,\mathcal {H_{\infty}})\pi
_{S^{\perp}}(d\sigma _{\infty})=\pi _{S^{\perp}}(d\xi),$$ whilst
\begin{eqnarray*}
\mathcal {Q}(\nabla ^{N_{\infty}}\xi)=\pi _{N_{\infty}}(d\xi)+(\pi
_{N_{\infty}}(d\xi),\mathcal {H_{\infty}})\sigma _{\infty}\in\Gamma
(S^{\perp}).
\end{eqnarray*}
To conclude that $\mathcal {Q}$ preserves connections, we just need to
verify that $$d\xi -\pi _{N_{\infty}}(d\xi)\in\Gamma (S).$$ That is
immediate: $d\xi$ is
still a section of $\langle\sigma _{\infty},v_{\infty}\rangle
^{\perp}$,
$$(d\xi,\sigma_{\infty})=(d\xi,\sigma_{\infty})+(\xi,d\sigma_{\infty})=0=(d\xi,v_{\infty})+(\xi,dv_{\infty})=(d\xi,v_{\infty});$$
and, therefore, $d\xi -\pi _{N_{\infty}}(d\xi)$ is the orthogonal projection of $d\xi$ onto the tangent bundle to $\sigma_{\infty}$. 

Finally, the fact that 
$$(\mathcal {Q}(\xi ),\pi _{S^{\perp}}(v_{\infty}))=(\xi ,v_{\infty})+(\xi ,\mathcal
{H}_{\infty})(\sigma _{\infty},v_{\infty})= -(\mathcal
{Q}(\xi),\mathcal {Q}(\mathcal {H}_{\infty})),$$for $\xi\in N_{\infty}$, 
establishes \eqref{eq:mcvinSperp} and completes the proof.
\end{proof}

\subsubsection{The Willmore energy}
Suppose, for the moment, that $\Sigma$ is compact. The \textit{Willmore energy} $\mathcal{W}(\Lambda)$ of $\Lambda$ is given by\footnote{In the literature, different scalings of the Willmore energy can be found. Our choice is justified by the classical scaling in the Dirichlet energy functional.} 
$$\mathcal{W}(\Lambda)=\int _{\Sigma}|\Pi _{0}|^2dA,$$ for $\Pi_{0}$ the trace-free part of the second fundamental form of $\Lambda$ (calculated with respect to any representative metric on $S^{n}$ and independent of that choice). 

Next we present a manifestly conformally-invariant formulation of the Willmore energy.
It follows the definition presented in \cite{BFLPP}, in the
quaternionic setting, for the particular case of $n=4$. The intervention of the conformal structure will restrict to the Hodge $*$-operator, which is conformally-invariant
on $1$-forms over a surface. 

Given $\mu,\eta\in\Omega^{1}(S^{*}T\mathcal{G})$, let $(\mu\wedge\eta)$ be the $2$-form defined from the metric on $S^{*}T\mathcal{G}$:
$$(\mu\wedge\eta)_{(X,Y)}=(\mu_{X},\eta_{Y})-(\mu_{Y},\eta_{X}),$$for all $X,Y\in\Gamma(T\Sigma)$. Note that $$(dS\wedge *dS)=-(*dS\wedge dS)=(dS,dS)dA,$$ $(dS\wedge *dS)$ is a conformally invariant way of writing $(dS,dS)_{g}dA_{g}$, for $g\in\mathcal{C}_{\Lambda}$, with $dA_{g}$ denoting the area element of $(\Sigma,g)$ and $(,)_{g}$ denoting the Hilbert-Schmidt metric on $L((T\Sigma,g),S^{*}T\mathcal{G})$.

\begin{thm}\cite{BFLPP}\label{Wenergy}
$$\mathcal{W}(\Lambda)=\frac{1}{2}\int_{\Sigma}(d S\wedge *d S).$$
\end{thm}

\begin{proof}
By \eqref{eq:dSmathcalN}, fixing a metric on $\Sigma$,
$|dS|^{2}=|\mathcal{N}|^{2}$. To prove the theorem, we fix
$v_{\infty}\in\R^{n+1,1}$ non-zero, provide $\Sigma$ with the metric
induced by $\sigma_{\infty}$ and show that $|\mathcal{N}|^{2}=2|\Pi
_{\infty}|^2$, for $\Pi _{\infty}$ the trace-free part of
the second fundamental form of $\sigma_{\infty}$.

Fixing a local orthonormal frame $\{X_{i}\}_{i}$ of $T\Sigma$, we
have
$$|\mathcal{N}|^{2}=-\sum_{i}\mathrm{tr}(\mathcal{N}_{X_{i}}\mathcal{N}_{X_{i}})=2\sum_{i}\mathrm{tr}(\mathcal{N}_{X_{i}}^{t}\mathcal{N}_{X_{i}}\vert_{S}).$$
Recall that if $(e_{i})_{i}$ and $(\hat{e}_{i})_{i}$ are dual basis
of a vector space $E$ provided with a metric $(,)$, then, given
$\mu\in\mathrm{End}(E)$,
$\mathrm{tr}(\mu)=\sum_{i}(\mu(e_{i}),\hat{e}_{i})$. Let
$\hat{\sigma}_{\infty}$ be the section of $S$ determined by the
conditions $(\hat{\sigma}_{\infty},\hat{\sigma}_{\infty})=0$,
$(\sigma_{\infty},\hat{\sigma}_{\infty})=-1$ and
$(\hat{\sigma}_{\infty}, d\sigma_{\infty})=0$. Then
$(\sigma_{\infty},
d_{X_{1}}\sigma_{\infty},d_{X_{2}}\sigma_{\infty},\hat{\sigma}_{\infty})$
is a frame of $S$ with dual $(-\hat{\sigma}_{\infty},
d_{X_{1}}\sigma_{\infty},d_{X_{2}}\sigma_{\infty},-\sigma_{\infty})$
and we conclude that
$$|\mathcal{N}|^{2}=2\sum_{i,j}(\mathcal{N}_{X_{i}}(d_{X_{j}}\sigma_{\infty}),\mathcal{N}_{X_{i}}(d_{X_{j}}\sigma_{\infty})).$$
Lemma \ref{NperpisoSperp} establishes $\mathcal{N}_{X_{i}}(d_{X_{j}}\sigma_{\infty})=\mathcal{Q}(\Pi
_{\infty}(X_{i},X_{j}))$ and completes the proof.
\end{proof}

\subsubsection{Willmore surfaces and harmonicity}

A conformal immersion is \textit{Willmore} it it extremizes the Willmore functional and \textit{constrained Willmore} it it extremizes the Willmore functional with respect to variations that infinitesimally preserve the conformal structure, that is, variations satisfying   
$$\frac{d}{d t}_{\vert{t=0}}(\delta_{z},\delta_{z})_{t}=0,$$ for the
variation $(,)_{t}$ of the induced metric and $z$ a holomorphic chart.

Theorem \ref{Wenergy} makes it clear that
$$\mathcal{W}(\Lambda)=E(S,\mathcal{C}_{\Lambda}),$$ the Willmore energy of $\Lambda$
coincides with the Dirichlet energy of S with respect to any of the
metrics in the conformal class $\mathcal{C}_{\Lambda}$ (although the
Levi-Civita connection is not conformally-invariant, the Dirichlet
energy of a mapping of a surface is preserved under conformal
changes of the metric (and so is its harmonicity)). Furthermore, in
a very well known result, established by Blaschke \cite{blaschke}, for $n=3$, and, independently, by Ejiri \cite{ejiri} and Rigoli
\cite{rigoli}, for general $n$:
\begin{thm}\cite{blaschke, ejiri,rigoli}\label{LWiffSharmonic} $\Lambda$ is a Willmore surface if and
only if its central sphere congruence
$S:(\Sigma,\mathcal{C}_{\Lambda})\rightarrow\mathcal{G}$ is a
harmonic map.
\end{thm}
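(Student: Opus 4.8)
The plan is to derive the Euler--Lagrange equation of the Willmore functional and recognise it as the harmonic map equation for $S$. By Theorem~\ref{Wenergy} we already know $\mathcal{W}(\Lambda)=E(S,\mathcal{C}_\Lambda)$ is, up to the conformally-invariant Hodge star, the Dirichlet energy of the map $S:(\Sigma,\mathcal{C}_\Lambda)\to\mathcal{G}$ into the pseudo-Riemannian symmetric space $\mathcal{G}=\mathrm{Gr}_{(3,1)}(\R^{n+1,1})$. The first step is therefore to recall that, for a map of a surface, the Dirichlet energy depends only on the conformal class of the domain metric, and that its critical points among \emph{all} maps $\Sigma\to\mathcal{G}$ are exactly the harmonic maps, characterised by the vanishing of the tension field, equivalently by $d^{\mathcal{D}}*dS=0$ under the identification \eqref{eq:fsg}, where $d^{\mathcal{D}}$ is the exterior derivative coupled to the connection $\mathcal{D}$ on $S\wedge S^\perp$. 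So if we could vary $S$ freely, the result would be immediate.

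The subtlety — and the main obstacle — is that in the Willmore problem one is \emph{not} free to vary $S$ arbitrarily: the admissible variations of $S$ are precisely those induced by variations of the underlying immersion $\Lambda$, since $S$ is the mean curvature sphere congruence of $\Lambda$, not an independent field. Thus the second step is to understand the image of the derivative of the assignment $\Lambda\mapsto S$ at a given surface, i.e.\ to identify which infinitesimal deformations $\dot S\in\Gamma(S^*T\mathcal{G})\cong\Omega^0(S\wedge S^\perp)$ arise from infinitesimal deformations $\dot\Lambda$ of the surface. The key structural input here is the enveloping condition: $\Lambda\subset S$ and, more precisely, $\Lambda^{(1)}\subset S$, together with the conformal-immersion normalisations encoded in \eqref{Hillseq}. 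Differentiating these constraints along a variation shows that $\dot S$, viewed as an element of $\mathrm{Hom}(S,S^\perp)$, is constrained on the subbundle $\Lambda^{1,0}\oplus\Lambda^{0,1}$ in a way that reflects the first- and second-order contact of sphere and surface; in fact one finds that the relevant component of $\dot S$ ranges over (essentially) all of $\Omega^0(S\wedge S^\perp)$ modulo a piece that is automatically $L^2$-orthogonal to $d^{\mathcal{D}}*dS$. Concretely, one writes $dS=\mathcal{N}=\mathcal{N}'+\mathcal{N}''$ according to the $(1,0)$/$(0,1)$-decomposition, uses the flatness of $d$ (the Maurer--Cartan equation $\mathcal{D}$-curvature $+\,\tfrac12[\mathcal{N}\wedge\mathcal{N}]=0$ and $d^{\mathcal{D}}\mathcal{N}=0$) to rewrite $d^{\mathcal{D}}*dS$, and then shows that pairing it against an arbitrary admissible $\dot S$ recovers the full harmonic-map equation because the ``forbidden'' directions contribute nothing.

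The third step is the actual variational computation: with $\delta S$ an admissible variation and using $\delta\mathcal{W}=\int_\Sigma(d^{\mathcal{D}}\delta S\wedge *dS)=-\int_\Sigma(\delta S,\,d^{\mathcal{D}}*dS)$ after integration by parts (legitimate since $\Sigma$ is compact without boundary), one concludes that $\Lambda$ is Willmore if and only if $(d^{\mathcal{D}}*dS)$ is orthogonal to all admissible $\delta S$. The remaining point is to check that this orthogonality forces $d^{\mathcal{D}}*dS=0$ outright, rather than merely the vanishing of some components; this is exactly where the analysis of admissible variations from the second step is used. One shows the component of $d^{\mathcal{D}}*dS$ in the ``automatically orthogonal'' directions vanishes for independent algebraic reasons — it lies in $\mathrm{Hom}(\Lambda^{(1)},\Lambda)\cap(\text{image of }dS)$, which by \eqref{Hillseq} and a type argument is already zero — so that orthogonality to the admissible directions is equivalent to full vanishing. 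Hence $S$ is harmonic. For the converse, harmonicity of $S$ gives $d^{\mathcal{D}}*dS=0$, which immediately kills $\delta\mathcal{W}$ for every variation, admissible or not, so $\Lambda$ is Willmore. I expect the genuinely delicate part to be the bookkeeping in the second step: correctly identifying the space of admissible $\delta S$ and proving that the potentially-missed component of the tension field is forced to vanish by the contact conditions \eqref{Hillseq}; the rest is the standard first-variation-of-Dirichlet-energy argument transported into the $S\wedge S^\perp$ formalism.
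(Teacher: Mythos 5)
Your strategy is essentially the paper's: identify $\mathcal{W}$ with the Dirichlet energy of $S$, observe that only those variations of $S$ induced by variations of $\Lambda$ are admissible, use the contact conditions to constrain the tension field, and conclude. But the two points you treat as routine are where the content lies, and as written they are gaps. First, your first-variation formula $\delta\mathcal{W}=\int_\Sigma(d^{\mathcal{D}}\delta S\wedge *dS)$ omits a term: since $\mathcal{W}(\Lambda_t)=\tfrac12\int_\Sigma(dS_t\wedge *_t\,dS_t)$ and the Hodge star $*_t$ of the induced conformal structure $\mathcal{C}_{\Lambda_t}$ varies with $t$, there is an extra contribution $\tfrac12\int_\Sigma(dS\wedge\dot{*}\,dS)$. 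The conformal invariance of the Dirichlet energy, which you invoke, concerns rescaling within a fixed conformal class and says nothing about varying the class. The paper kills this term by writing $\dot{*}\,dS=-(dS)\dot{J}$, noting that $\dot{J}$ anticommutes with $J$ and so swaps $T^{1,0}\Sigma$ and $T^{0,1}\Sigma$, and then using the conformality of $S$, namely $(d_{X\pm iJX}S,d_{X\pm iJX}S)=0$. Without this, the Euler--Lagrange equation you derive is not yet the harmonic map equation.

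Second, the decisive step --- that orthogonality of $\tau_z:=4\nabla_{\delta_{\bar z}}S_z$ to all admissible $\dot S$ forces $\tau_z=0$ --- is asserted rather than proved, and your supporting claim is misstated: $\tau_z$ is a section of $\mathrm{Hom}(S,S^{\perp})$, so it cannot ``lie in $\mathrm{Hom}(\Lambda^{(1)},\Lambda)$''. What \eqref{Hillseq} actually yields (via $(\nabla_{\delta_z}S_{\bar z})\sigma^z_z=\nabla^{S^{\perp}}_{\delta_z}(\pi_{S^{\perp}}\sigma^z_{z\bar z})-\pi_{S^{\perp}}(\nabla^S_{\delta_z}\sigma^z_z)_{\bar z}=0$ and its conjugate) is $\Lambda^{(1)}\subset\ker\tau_z$, hence $\mathrm{Im}\,\tau_z^{t}\subset(\Lambda^{(1)})^{\perp}\cap S=\Lambda$. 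That inclusion is what closes the argument: for an admissible variation $\dot S\sigma=\pi_{S^{\perp}}\dot\sigma$, so $\mathrm{tr}(\tau_z^{t}\dot S)=\lambda$ where $\tau_z^{t}(\pi_{S^{\perp}}\dot\sigma)=\lambda\sigma$, and since $\pi_{S^{\perp}}\dot\sigma$ is freely prescribable one chooses the variation so that $\lambda$ is positive wherever $\tau_z\neq0$, contradicting $\int_\Sigma\mathrm{tr}(\tau_z^{t}\dot S)\,dA_z=0$. You need to supply this positivity argument; ``orthogonality to the admissible directions is equivalent to full vanishing'' is exactly the statement to be proved, not a remark. (Your easy direction, harmonic $\Rightarrow$ Willmore, matches the paper once the $\dot{*}$ issue above is settled.)
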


Next we present a proof of Theorem \ref{LWiffSharmonic} in the
light-cone picture. This is a generalization of the proof presented
in \cite{BFLPP}, in the quaternionic setting, for the particular
case of $n=4$.\newline

\begin{proof} Given a variation $\Lambda_{t}$ of $\Lambda$ and
$S_{t}$ the corresponding variation of $S$ through central sphere
congruences, the Dirichlet energy $E(S_{t},\mathcal {C}_{t})$ of
$S_{t}$ with respect to the conformal structure $\mathcal {C}_{t}$
induced in $\Sigma$ by $\Lambda_{t}$ is given by
$\frac{1}{2}\int_{\Sigma}(dS_{t}\wedge *_{t}dS_{t})$, for
$\mathrm{dA}_{t}$ and $*_{t}$ the area element and the Hodge
$*$-operator of $(\Sigma,g_{t})$, respectively, fixing
$g_{t}\in\mathcal{C}_{t}$. Hence
$$\frac{d}{dt}_{\mid_{t=0}}E(S_{t},\mathcal {C}_{t})=\frac{1}{2}\int
_{\Sigma}((d\dot{S}\wedge
*dS)+(dS\wedge\dot{*}dS)+(dS\wedge*d\dot{S})),$$ abbreviating
$\frac{d}{dt}_{\mid_{t=0}}$ by a dot. Let $(J_{t})_{t}$ be the
corresponding variation of $J$ through canonical complex structures.
Differentiation at $t=0$ of $*_{t}dS_{t}=-(dS_{t})J_{t}$ gives
$\dot{*}dS=-(dS)\dot{J}$, whilst that of $J_{t}^{2}=-I$ gives
$\dot{J}J=-J\dot{J}$ and, in particular, that $\dot{J}$ intertwines
the eigenspaces of $J$. The $\mathcal{C}_{\Lambda}$-conformality of
$S$, $(d_{X\pm iJX}S,d_{X\pm iJX}S)=0$, respectively, for
$X\in\Gamma(T\Sigma)$, establishes then $(dS\wedge\dot{*}dS)=0$ and,
therefore,
$$\frac{d}{dt}_{\mid_{t=0}}E(S_{t},\mathcal
{C}_{t})=\frac{d}{dt}_{\mid_{t=0}}E(S_{t},\mathcal {C}_{0}).$$ It is
now clear that if
$S:(\Sigma,\mathcal{C}_{\Lambda})\rightarrow\mathcal{G}$ is harmonic
then $\Lambda$ is Willmore.

Conversely, suppose $\Lambda$ is Willmore, fix $z$ a holomorphic
chart of $(\Sigma,\mathcal{C}_{\Lambda})$ and let us show that the
tension field $\tau_{z}$ of $S:(\Sigma,g_{z})\rightarrow
\mathcal{G}$ vanishes. First of all, observe that
$$4\nabla_{\delta_{z}}S_{\bar{z}}=\tau_{z}=4\nabla_{\delta_{\bar{z}}}S_{z}$$
to conclude that $\Lambda^{(1)}\subset \mathrm{ker}\,\tau_{z}$: by
\eqref{Hillseq},
$$(\nabla_{\delta_{z}}S_{\bar{z}})\sigma_{z}^{z}=\nabla^{S^{\perp}}_{\delta_{z}}(\pi_{S^{\perp}}\sigma_{z\bar{z}}^{z})-\pi_{S^{\perp}}(\nabla^{S}_{\delta_{z}}\sigma_{z}^{z})_{\bar{z}}=0,$$
and, similarly,
$(\nabla_{\delta_{\bar{z}}}S_{z})\sigma_{\bar{z}}^{z}=0$, whilst
$(\nabla_{\delta_{z}}S_{\bar{z}})\sigma^{z}=0$ is immediate. It
follows that $\mathrm{Im}\tau_{z}^{t}\subset\Lambda$. Fix
$\Lambda_{t}=\langle\sigma_{t}\rangle$ a variation of $\Lambda$ and
let $S_{t}$ be the corresponding variation of $S$ through central
sphere congruences. Then
$\tau_{z}^{t}(\pi_{S^{\perp}}\dot{\sigma})=\lambda\sigma_{0}$, for
some $\lambda\in C^{\infty}(\Sigma,\R)$, and, therefore,
$\mathrm{tr}(\tau_{z}^{t}\dot{S})=\lambda$ (note that
$\dot{S}\sigma=\pi_{S^{\perp}}\dot{\sigma}$). On the other hand,
classically, $$0=\frac{d}{dt}_{\mid_{t=0}}E(S_{t},\mathcal
{C}_{\Lambda})=-\int_{\Sigma}(\dot{S},\tau_{z})dA_{z}=-\int_{\Sigma}\mathrm{tr}(\tau_{z}^{t}\dot{S})dA_{z},$$
for $dA_{z}$ the area element of $(\Sigma,g_{z})$. Now suppose
$\tau_{z}$ is non-zero. Then so is
$\tau_{z}^{t}\in\Gamma(\mathrm{Hom}(S^{\perp},S))$, so we can choose
$\sigma_{t}$ such that $\lambda$ is positive, which leads to a
contradiction and completes the proof.
\end{proof}

Having characterized Willmore surfaces by the harmonicity of the
central sphere congruence, and recalling \eqref{eq:dSmathcalN}, we
deduce the Willmore surface equation,
\begin{equation}\label{eq:Willmore eq}
d^{\mathcal{D}}*\mathcal{\mathcal{N}}=0.
\end{equation}
More generally, we have a manifestly conformally-invariant
characterization of constrained Willmore surfaces in space-forms,
first established in \cite{SD} and reformulated in
\cite{burstall+calderbank} as follows:

\begin{thm}\cite{burstall+calderbank} \label{CWtemp}
$\Lambda$ is a constrained Willmore surface if and only if there
exists a real form $q\in\Omega ^{1}(\Lambda\wedge\Lambda ^{(1)})$
with
\begin{equation}\label{eq:curlyDextderivofq}
d^{\mathcal{D}}q=0
\end{equation}
such that
\begin{equation}\label{eq:mainCWeq}
d^{\mathcal{D}}*\mathcal{N}=2\,[q\wedge *\mathcal{N}].
\end{equation}
Such a form $q$ is said to be a \emph{[Lagrange] multiplier} for $\Lambda$ and $\Lambda$ is said to be a \emph{$q$-constrained Willmore surface}. 
\end{thm}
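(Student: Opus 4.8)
The plan is to derive the constrained Willmore condition by computing the first variation of the Willmore energy under infinitesimally conformal variations and translating the resulting Euler--Lagrange equation into the zero-curvature language of $\mathcal{D}$ and $\mathcal{N}$. The starting point is the same variational computation as in the proof of Theorem~\ref{LWiffSharmonic}: for a variation $\Lambda_t$ of $\Lambda$ with corresponding central sphere congruence $S_t$ and conformal structure $\mathcal{C}_t$, one has
\[
\frac{d}{dt}\Big|_{t=0}\mathcal{W}(\Lambda_t)=-\int_\Sigma\mathrm{tr}(\tau_z^t\dot S)\,dA_z,
\]
but now $\Lambda$ is only required to be a critical point among those variations that are infinitesimally conformal, i.e.\ those for which $\dot{(Z^{1,0},Z^{1,0})}=0$. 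So the first step is to identify, inside the space of all infinitesimal variations $\dot S$, the subspace $\mathcal V$ tangent to infinitesimally conformal variations, and to characterize its annihilator.

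Next I would use the familiar Lagrange-multiplier principle: $\Lambda$ is constrained Willmore precisely when $dE(S,\mathcal C_\Lambda)$ annihilates $\mathcal V$, equivalently when $dE$ lies in the annihilator of $\mathcal V$, which by the constraint being ``$\frac{d}{dt}(Z^{1,0},Z^{1,0})_t=0$'' should be spanned by the differentials of the constraint functionals, i.e.\ by terms of the form $\int_\Sigma \langle q, \text{(variation of the induced metric)}\rangle$. The key computation is then to show that the variation of the induced conformal structure, paired against a $1$-form $q$, can be rewritten as $\int_\Sigma \mathrm{tr}((\text{something involving }q)\,\dot S)\,dA_z$, where that ``something'' must land in $\Omega^1(\Lambda\wedge\Lambda^{(1)})$; this is where the shape of the multiplier $q\in\Omega^1(\Lambda\wedge\Lambda^{(1)})$ is forced. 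Combining, criticality under conformal variations becomes $\tau_z = (\text{contribution of }q)$, and after identifying the tension field with $d^{\mathcal D}*\mathcal N$ via \eqref{eq:dSmathcalN} (as in the proof of \eqref{eq:Willmore eq}), and recognizing the $q$-contribution as the commutator term $2[q\wedge *\mathcal N]$, one obtains \eqref{eq:mainCWeq}. The closedness condition \eqref{eq:curlyDextderivofq} then appears as the integrability/consistency requirement that this multiplier be a well-defined closed form, equivalently that the $q$-term genuinely arise as a gradient; alternatively it is the condition that makes the right-hand side of \eqref{eq:mainCWeq} automatically $\mathcal D$-closed-modulo-the-equation so that the deformed connection family has zero curvature.

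For the converse, I would assume such a $q$ exists with $d^{\mathcal D}q=0$ and \eqref{eq:mainCWeq} holds, and run the variational identity backwards: for an arbitrary infinitesimally conformal variation the term $\int_\Sigma\mathrm{tr}(\tau_z^t\dot S)$ splits, by \eqref{eq:mainCWeq}, into the $q$-term, which I must show vanishes on infinitesimally conformal variations precisely because $q$ takes values in $\Lambda\wedge\Lambda^{(1)}$ and is $\mathcal D$-closed (an integration by parts using $d^{\mathcal D}q=0$), leaving $\frac{d}{dt}|_{t=0}\mathcal W(\Lambda_t)=0$. The main obstacle I anticipate is the bookkeeping in the forward direction: precisely pinning down that the gradient of the conformal constraint, transported through the identification $S^*T\mathcal G\cong S\wedge S^\perp$ of \eqref{eq:fsg}, produces exactly a term of the form $[q\wedge *\mathcal N]$ with $q$ valued in $\Lambda\wedge\Lambda^{(1)}$ and no larger bundle — i.e.\ showing that the subbundle $\Lambda\wedge\Lambda^{(1)}$ (and not merely $S\wedge S$ or some bigger piece) is the right home for the multiplier. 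This requires carefully using \eqref{Hillseq}, the structure of $\Lambda^{1,0},\Lambda^{0,1}$ from \eqref{eq:Lambdaij}, and the fact established in the proof of Theorem~\ref{LWiffSharmonic} that $\mathrm{Im}\,\tau_z^t\subset\Lambda$ and $\Lambda^{(1)}\subset\ker\tau_z$, which constrain where the correction term can live.
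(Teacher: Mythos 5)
Your overall strategy (first variation of $\mathcal W$ plus a Lagrange--multiplier argument for the conformal constraint) is the same as the paper's, which itself only sketches the argument and defers the details to \cite{burstall+calderbank}; your identification of $\Lambda\wedge\Lambda^{(1)}$ as the home of the multiplier and of the algebraic term as $2[q\wedge *\mathcal N]$ is consistent with the paper's explicit formula $q_{\delta_z}\sigma_z=-\tfrac12 q^z\sigma$. The genuine gap is your treatment of the closedness condition \eqref{eq:curlyDextderivofq}. The map from a variation to the variation of the induced conformal structure is purely algebraic in the \emph{normal} part of the variation (it is contraction with the Hopf differential $\Pi^{2,0}$), so the naive multiplier argument you describe --- pairing the Willmore gradient against ``the differentials of the constraint functionals'' --- produces only the Euler--Lagrange equation \eqref{eq:mainCWeq} with an \emph{arbitrary} $q\in\Omega^1(\Lambda\wedge\Lambda^{(1)})$ (an arbitrary quadratic differential $q^z\,dz^2$): no integration by parts occurs, hence no differential condition on $q$ appears. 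Neither of your two fallback justifications closes this: ``that the multiplier be a well-defined closed form'' is not a consequence of anything you set up, and appealing to flatness of the family $d^{\lambda}_q$ is circular here, since the zero-curvature formulation (Theorem~\ref{CWzerocurv}) is derived \emph{from} Theorem~\ref{CWtemp} and is not available inside its proof.

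The missing idea is the one the paper's sketch flags: reparametrization invariance together with Weyl's Lemma. Because $\mathcal W$ sees only the normal part of a variation while the tangential part changes the pointwise conformal structure by $\overline{\bar\partial}(\dot f^{\top})$, a normal variation extends to an infinitesimally conformal one exactly when $\langle\dot f^{\perp},\Pi^{2,0}\rangle$ lies in $\mathrm{im}\,\overline{\bar\partial}$, which Weyl's Lemma identifies with $(H^0K)^{\perp}$. Annihilating this subspace (strictly larger than the kernel of the algebraic constraint map) forces the multiplier $Q=q^z\,dz^2$ to satisfy $\overline{\bar\partial}^{\,*}Q=0$, i.e.\ to be a \emph{holomorphic} quadratic differential; it is this holomorphicity, fed through $q_{\delta_z}\sigma_z=-\tfrac12 q^z\sigma$, that yields \eqref{eq:curlyDextderivofq} (compare Proposition~\ref{withvswithoutdecomps}, which shows $d^{\mathcal D}q=0$ packages exactly the structural condition $q^{1,0}\in\Omega^{1,0}(\Lambda\wedge\Lambda^{0,1})$ together with holomorphicity of $q^z$). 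The same mechanism is needed in your converse: the $q$-term vanishes on infinitesimally conformal variations because $\mathrm{im}\,\overline{\bar\partial}$ pairs to zero with holomorphic quadratic differentials, not by a generic integration by parts against a $\mathcal D$-closed form.
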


\begin{proof}
Calculus of variations techniques show that
the variational Willmore energy relates to the variational surface
by
$$\dot{\mathcal{W}}=\int_{\Sigma}((d^{\mathcal{D}}*\mathcal{\mathcal{N}})\wedge\dot{\Lambda}),$$
for some non-degenerate pairing $(\,\wedge\,)$. For general
variations, $\dot{\Lambda}$ can be arbitrary, establishing the
Willmore surface equation \eqref{eq:Willmore eq}, whereas
infinitesimally conformal variations are characterized by the normal
variational being in the image of the conformal Killing operator
$\overline{\bar{\partial}}$, which, according to Weyl's Lemma,
consists of $(H^{0}K)^{\perp}$, for $H^{0}K$ the space of
holomorphic quadratic differentials. The result follows by defining
a multiplier $q$ from a quadratic differential $q^{z}dz^{2}\in
(H^{0}K)^{\perp}$ via
$q_{\delta_{z}}\sigma_{z}=-\frac{1}{2}\,q^{z}\sigma$, for $\sigma$ a
lift of $\Lambda$ and $z$ a holomorphic chart of $\Sigma$.
\end{proof}

Willmore surfaces are the $0$-constrained Willmore surfaces. The
zero multiplier is not necessarily the only multiplier for a
constrained Willmore surface with no constraint on the conformal
structure, though. In fact, the uniqueness of multiplier
characterizes non-isothermic constrained Willmore surfaces, as we
shall see below in this text.

The characterization of constrained Willmore surfaces above
motivates a natural extension to surfaces that are not necessarily
compact.

Next we present a useful result, which establishes, in particular,
that if $q$ is a multiplier for $\Lambda$, then $q^{1,0}$ takes
values in $\Lambda \wedge\Lambda ^{0,1}$.

\begin{lemma}\label{withvswithoutdecomps}
Given $q\in\Omega ^{1}(\Lambda\wedge\Lambda ^{(1)})$ real,

$i)$\,\,if $d^{\mathcal{D}}q=0$ then $q^{1,0}\in\Omega
^{1,0}(\Lambda \wedge\Lambda ^{0,1})$ or, equivalently,
$q^{0,1}\in\Omega ^{0,1}(\Lambda \wedge\Lambda ^{1,0})$;

$ii)$\,\,$d^{\mathcal{D}}q=0$ if and only if
$d^{\mathcal{D}}q^{1,0}=0$, or, equivalently,
$d^{\mathcal{D}}q^{0,1}=0;$

$iii)$\,\,$d^{\mathcal{D}}q=0$ if and only if $d^{\mathcal{D}}*q=0$.
\end{lemma}

\begin{proof}
Fix $z$ a holomorphic chart of $\Sigma$. First of all, observe that
a section $\xi$ of $\Lambda \wedge\Lambda ^{(1)}$ is a section of
$\Lambda \wedge \Lambda ^{1,0}$ if and only if $\xi(\sigma
_{z}^{z})=0$. Suppose $d^{\mathcal{D}}q=0$. Then, in particular,
$d^{\mathcal{D}}q\,(\delta_{z},\delta_{\bar{z}})\,\sigma^{z}=0$, or,
equivalently, $$\mathcal{D}_{\delta_{z}}(q_{\delta
_{\bar{z}}}\sigma^{z})-q_{\delta_{\bar{z}}}(\mathcal{D}_{\delta_{z}}\sigma^{z})-\mathcal{D}_{\delta_{\bar{z}}}(q_{\delta_{z}}\sigma^{z})+q_{\delta_{z}}(\mathcal{D}_{\delta_{\bar{z}}}\sigma^{z})=0,$$
establishing
\begin{equation}\label{eq:qsigmazdeltabarzigualqsigmabarzdeltaz}
q_{\delta_{z}}\sigma^{z}_{\bar{z}}=q_{\delta_{\bar{z}}}\sigma_{z}^{z}.
\end{equation}
In its turn,
$d^{\mathcal{D}}q\,(\delta_{z},\delta_{\bar{z}})\,\sigma_{z}^{z}=0$ implies $$\mathcal{D}_{\delta_{z}}(q_{\delta
_{\bar{z}}}\sigma_{z}^{z})-\mathcal{D}_{\delta_{\bar{z}}}(q_{\delta_{z}}\sigma_{z}^{z})+q_{\delta_{z}}
\sigma_{z\bar{z}}^{z}=0,$$ by \eqref{Hillseq}. On the other hand, the
skew-symmetry of $q$ establishes
$(q\sigma_{z\bar{z}}^{z},\sigma_{z\bar{z}}^{z})=0$ and, therefore,
\begin{equation}\label{eq:que}
q\sigma_{z\bar{z}}^{z}=\mu\sigma_{z}+\eta\sigma_{\bar{z}}^{z},
\end{equation}
for some $\mu,\eta\in\Omega^{1}(\underline{\C})$. Hence
$$\mathcal{D}_{\delta_{z}}(q_{\delta
_{\bar{z}}}\sigma_{z}^{z})+\mu_{\delta
_{z}}\sigma_{z}^{z}=\mathcal{D}_{\delta_{\bar{z}}}(q_{\delta_{z}}
\sigma_{z}^{z})-\eta_{\delta_{z}}\sigma_{\bar{z}}^{z}.$$ It is
obvious that a section of $\Lambda\wedge\Lambda ^{(1)}$ transforms
sections of $\Lambda^{(1)}$ into sections of $\Lambda$, so that, in
particular, both $q_{\delta _{\bar{z}}}\sigma_{z}^{z}$ and
$q_{\delta_{z}} \sigma_{z}^{z}$ are sections of $\Lambda$. We
conclude that $\mathcal{D}_{\delta_{z}}(q_{\delta
_{\bar{z}}}\sigma_{z}^{z})+\mu_{\delta _{z}}\sigma_{z}^{z}$ is a
section of $\Lambda^{1,0}\cap\Lambda^{0,1}=\Lambda$. Write
$q_{\delta _{\bar{z}}}\sigma_{z}^{z}=\lambda\sigma^{z}$, with
$\lambda\in\Gamma(\underline{\C})$. Then $$\lambda_{z}\sigma^{z}
+(\lambda +\mu_{\delta _{z}})\sigma_{z}^{z}=\gamma\sigma^{z},$$ for
some $\gamma\in\Gamma (\underline{\C})$. In particular,
$\lambda=-\mu_{\delta _{z}}$. Equation \eqref{eq:que} establishes,
on the other hand, $$q=-2\mu\,
\sigma^{z}\wedge\sigma^{z}_{z}-2\eta\,\sigma^{z}\wedge\sigma^{z}_{\bar{z}}$$
and, in particular,
$q_{\delta_{z}}\sigma^{z}_{\bar{z}}=\mu_{\delta_{z}}\sigma^{z}$.
Equation \eqref{eq:qsigmazdeltabarzigualqsigmabarzdeltaz} completes
the proof of $i)$.

Next we prove $ii)$. By \eqref{Hillseq},
$$\mathcal{D}^{1,0}\Gamma(\Lambda^{1,0})\subset\Omega^{1,0}(\Lambda^{1,0})$$
or, equivalently,
$$\mathcal{D}^{0,1}\Gamma(\Lambda^{0,1})\subset\Omega^{0,1}(\Lambda^{0,1})$$
and, therefore, following $i)$, $d^{\mathcal{D}}q^{1,0}\in\Omega
^{2}(\Lambda\wedge\Lambda^{0,1})$ and
$d^{\mathcal{D}}q^{0,1}\in\Omega ^{2}(\Lambda \wedge\Lambda
^{1,0})$. Hence $d^{\mathcal{D}}q=0$ forces $d^{\mathcal{D}}q^{1,0}$
and $d^{\mathcal{D}}q^{0,1}$ to vanish separately. The reality of
$q$ completes the proof of $ii)$.

As for $iii)$, it is immediate from $ii)$.
\end{proof}

\subsubsection{Constrained Willmore surfaces: a zero-curvature characterization}

For a map into a Grassmannian, harmonicity amounts to the flatness of a family of connections, according to Uhlenbeck \cite{uhlenbeck}. With the characterization of Willmore surfaces by the harmonicity of
the central sphere congruence, a zero-curvature characterization of Wilmore surfaces  follows. More generally, the constrained Willmore surface equations amount to the flatness of a certain\footnote{The associated family of flat connections presented in \cite{BQ} corresponds to a different choice of orientation in $\Sigma$.} family of connections too:
\begin{thm}\cite{burstall+calderbank} \label{CWzerocurv}
$\Lambda$ is a constrained Willmore surface if and only if there
exists a real form $q\in\Omega^{1}(\Lambda\wedge\Lambda^{(1)})$ such that 
$$d^{\lambda}_{q}:=\mathcal{D}+\lambda
^{-1}\mathcal{N}^{1,0}+\lambda\mathcal{N}^{0,1}+(\lambda
^{-2}-1)q^{1,0}+(\lambda ^{2}-1)q^{0,1}$$ is flat for all
$\lambda\in S^{1}$.
\end{thm}

Before proceeding to the proof of the theorem, and for further reference, observe that, given $a,b\in\R^{n+1,1}$ and $T\in
o(\mathbb{R}^{n+1,1})$, $$[T,a\wedge b]=(Ta)\wedge b+a\wedge (Tb),$$
to conclude that
\begin{equation}\label{eq:LieLambdawedgeLambda1}
[\Lambda\wedge\Lambda^{(1)},\Lambda\wedge\Lambda^{(1)}]\subset\Lambda\wedge\Lambda=\{0\}.
\end{equation}
Now we proceed to the proof of the theorem:
\begin{proof}
According to the decomposition
\begin{equation}\label{eq:SSperpparallelornot}
o(\underline{\R}^{n+1,1})=(\wedge^{2}S\oplus\wedge^{2}S^{\perp})\oplus
S\wedge S^{\perp},
\end{equation}
the flatness of $d$, characterized by
$$0=R^{\mathcal{D}}+d^{\mathcal{D}}\mathcal{N}+\frac{1}{2}\,[\mathcal{N}\wedge
\mathcal{N}],$$ encodes two structure equations,
namely, 
\begin{equation}\label{eq:GReq}
R^{\mathcal{D}}+\frac{1}{2}\,[\mathcal{N}\wedge
\mathcal{N}]=0
\end{equation}
and 
\begin{equation}\label{eq:Ceq}
d^{\mathcal{D}}\mathcal{N}=0.
\end{equation}
Now suppose $q\in\Omega^{1}(\Lambda\wedge\Lambda^{(1)})$ is a real form. Given $\lambda\in S^{1}$, set $$A^{\lambda}=d^{\lambda}_{q}-\mathcal{D}\in\Omega^{1}(\mathrm{End}(\underline{R}^{n+1,1})).$$ The curvature tensor of $d^{\lambda}_{q}$ is given by 
$$R^{d^\lambda_{q}}=R^{\mathcal{D}}+d^{\mathcal{D}}A^{\lambda}+\frac{1}{2}[A^{\lambda}\wedge A^{\lambda}].$$Since there are no non-zero
$(2,0)$- or $(0,2)$-forms over a surface, we have 
$$\frac{1}{2}[A^{\lambda}\wedge A
^{\lambda}]=[\mathcal{N}^{1,0}\wedge
\mathcal{N}^{0,1}]+(\lambda
^{-1}-\lambda)\big([q^{1,0}\wedge \mathcal{N}^{0,1} ]-[
q^{0,1}\wedge\mathcal{N}^{1,0}]\big)+(2-\lambda
^{-2}-\lambda ^{2})[q^{1,0}\wedge q^{0,1}]$$ and equation \eqref{eq:GReq} 
establishes then
$$R^{d^{\lambda}_{q}}=d^{\mathcal{D}} A^{\lambda}+(\lambda
^{-1}-\lambda)\big([q^{1,0}\wedge\mathcal{N}^{0,1}
]-[q^{0,1}\wedge\mathcal{N}^{1,0}
])+\frac{1}{2}\,(2-\lambda ^{-2}-\lambda ^{2})[q\wedge q].$$But, according to   \eqref{eq:LieLambdawedgeLambda1}, $[q\wedge q]=0$. Hence
$$R^{d^{\lambda}_{q}}=d^{\mathcal{D}} A^{\lambda}+(\lambda
^{-1}-\lambda)\big([q^{1,0}\wedge\mathcal{N}^{0,1}
]-[q^{0,1}\wedge\mathcal{N}^{1,0}
]).$$
In its
turn, equation \eqref{eq:Ceq} gives
$$d^{\mathcal{D}}\mathcal{N}^{1,0}=\frac{i}{2}\,d^{\mathcal{D}}*\mathcal{N}=-d^{\mathcal{D}}\mathcal{N}^{0,1}.$$
We conclude that
$$R^{d^{\lambda}_{q}}=\frac{\lambda
^{-1}-\lambda}{2}\,i\,(d^{\mathcal{D}}*\mathcal{N}-2[q\wedge
*\mathcal{N}])+(\lambda
^{-2}-1)\,d^{\mathcal{D}}q^{1,0}+(\lambda
^{2}-1)\,d^{\mathcal{D}}q^{0,1}.$$
Yet again, according to the decomposition \eqref{eq:SSperpparallelornot}, 
it follows that
$R^{d^{\lambda}_{q}}=0$ if and only if both
\begin{equation}\label{eq:CWeqlambda1}
\frac{\lambda
^{-1}-\lambda}{2}\,i\,(d^{\mathcal{D}}*\mathcal{N}-2[q\wedge
*\mathcal{N}])=0
\end{equation}
and
\begin{equation}\label{eq:CWeqlambda2}
(\lambda ^{-2}-1)\,d^{\mathcal{D}}q^{1,0}+(\lambda
^{2}-1)\,d^{\mathcal{D}}q^{0,1}=0
\end{equation}
hold. Organizing equations \eqref{eq:CWeqlambda1} and
\eqref{eq:CWeqlambda2} by powers of $\lambda$ completes the proof.
\end{proof}

\subsubsection{Constrained Willmore surfaces and the isothermic surface condition}

Isothermic surfaces are classically defined by the existence of
conformal curvature line coordinates. Equation \eqref{eq:Pichanges} makes it clear that, although the second fundamental form is not conformally invariant, conformal curvature
line coordinates are preserved under conformal changes of the metric
and, therefore, so is the isothermic surface condition. The next
result presents a manifestly conformally-invariant formulation of
the isothermic surface condition, established in \cite{BDPP} and
discussed also in \cite{BS}.

\begin{lemma}\cite{BDPP,BS}
$\Lambda$ is isothermic if and only if there exists a non-zero
closed real $1$-form
$\eta\in\Omega^{1}(\Lambda\wedge\Lambda^{(1)})$. Under these
conditions, we may say that $(\Lambda,\eta)$ is an isothermic
surface. The form $\eta$ is defined up to a real constant scale.
\end{lemma}

\begin{rem}\label{deta0}
According to the decomposition \eqref{eq:SSperpparallelornot}, given
$\eta\in\Omega^{1}(\Lambda\wedge\Lambda^{(1)})$,
$d\eta=d^{\mathcal{D}}\eta+[\mathcal{N}\wedge\eta]$ vanishes if and
only if $d^{\mathcal{D}}\eta=0=[\mathcal{N}\wedge\eta]$.
\end{rem}

\begin{prop}\cite{BQ}\label{uniqq}
A constrained Willmore surface has a unique multiplier if and only
if it is not an isothermic surface. Furthermore:

$i)$\,\,if $q_{1}\neq q_{2}$ are multipliers for $\Lambda$, then
$(\Lambda,*(q_{1}-q_{2}))$ is isothermic;

$ii)$\,\,if $(\Lambda,\eta)$ is an isothermic $q$-constrained
Willmore surface, then the set of multipliers to $\Lambda$ is the
affine space $q+\langle *\eta\rangle_{\R}$.
\end{prop}

\begin{proof}
It is immediate, noting that $[\mathcal{N}\wedge\eta]=[*\eta\wedge
*\mathcal{N}]$ and recalling Lemma \ref{withvswithoutdecomps} -
$iii)$.
\end{proof}

A classical result by Thomsen \cite{thomsen} characterizes
isothermic Willmore surfaces in $3$-space as minimal surfaces in
some $3$-dimensional space-form. Constant mean curvature surfaces in
$3$-dimensional space-forms are examples of isothermic constrained
Willmore surfaces, as proven by J. Richter \cite{richter}. However,
isothermic constrained Willmore surfaces in $3$-space are not
necessarily constant mean curvature surfaces in some space-form, as
established by an example, presented in \cite{BPP}, of a constrained
Willmore cylinder that does not have constant mean curvature in any
space-form.

\subsection{Complexified constrained Willmore surfaces}

The transformations of a constrained Willmore surface $\Lambda$ we
present below in this work are, in particular, pairs
$((\Lambda^{1,0})^{*},(\Lambda^{0,1})^{*})$ of transformations
$(\Lambda^{1,0})^{*}$ and $(\Lambda^{0,1})^{*}$ of $\Lambda^{1,0}$
and $\Lambda^{0,1}$, respectively. The fact that $\Lambda^{1,0}$ and
$\Lambda^{0,1}$ intersect in a rank $1$ bundle will ensure that
$(\Lambda^{1,0})^{*}$ and $(\Lambda^{0,1})^{*}$ have the same
property. The isotropy of $\Lambda^{1,0}$ and $\Lambda^{0,1}$ will
ensure that of $(\Lambda^{1,0})^{*}$ and $(\Lambda^{0,1})^{*}$ and,
therefore, that of their intersection. The reality of the bundle
$\Lambda^{1,0}\cap\Lambda^{0,1}$ is preserved by the spectral
deformation, but it is not clear that the same is necessarily true
for B\"{a}cklund transformation. This motivates the definition of
\textit{complexified surface}.

Fix a conformal structure $\mathcal{C}$ on $\Sigma$ and consider the
corresponding complex structure on $\Sigma$. Let $\hat{d}$ be a flat
metric connection on $\underline{\C}^{n+2}$ and $d$ denote the
trivial flat connection. In what follows, omitting the reference to
some specific connection shall be understood as an implicit
reference to $d$.

\begin{defn}\label{defcomplexsurface}
We define a \emph{complexified} $\hat{d}$-\emph{surface} to be a
pair $(\Lambda^{1,0},\Lambda^{0,1})$ of isotropic rank $2$
subbundles of $\underline{\C}^{n+2}$ intersecting in a rank $1$
bundle $$\Lambda:=\Lambda^{1,0}\cap\Lambda^{0,1}$$ such that
$$\hat{d}^{1,0}\Gamma\Lambda\subset\Omega^{1,0}\Lambda^{1,0},\,\,\,\,\,\,\hat{d}^{0,1}\Gamma\Lambda\subset\Omega^{0,1}\Lambda^{0,1}.$$
\end{defn}
Obviously, given $\Lambda$ a (real) surface in
$\mathbb{P}(\mathcal{L})$, $(\Lambda^{1,0}, \Lambda^{0,1})$ is a
complexified surface with respect to $\mathcal{C}_{\Lambda}$.
Henceforth, we drop the term "complexified" and use \textit{real
surface} when referring explicitly to a complexified surface
$(\Lambda^{1,0}, \Lambda^{0,1})$ defining a real surface $\Lambda$.
Observe that $(\Lambda^{1,0},\Lambda^{0,1})$ is a real surface if
and only if $\Lambda$ is a real bundle (recall that $\Lambda$ is an
immersion if and only if the bundle $\Lambda^{1,0}+\Lambda^{0,1}$
has rank $3$).

Observe, on the other hand, that, in the particular case of a real
surface $(\Lambda^{1,0},\Lambda^{0,1})$, the notation
$\Lambda^{1,0}$ and $\Lambda^{0,1}$ is consistent with
\eqref{eq:Lambdaij}. Indeed, the $\mathcal{C}$-isotropy of
$\Lambda^{1,0}$ characterizes the $\mathcal{C}$-conformality of the
lifts of $\Lambda$, or equivalently, the fact that
$\mathcal{C}=\mathcal{C}_{\Lambda}$.

\subsubsection{Constrained Willmore surfaces and perturbed 
harmonic bundles}

Theorem \ref{CWzerocurv} motivates the definition of \textit{perturbed 
harmonicity} for a bundle, which we present next and which will apply
to the central sphere congruence to provide a characterization of constrained Willmore surfaces. In the particular case of a bundle of $(3,1)$-planes in $\R^{n+1,1}$, our notion of perturbed harmonicity coincides with the notion of $2$-\emph{perturbed harmonicity}  introduced in \cite{BQ}.

Given $V$ a non-degenerate subbundle of $\underline{\C}^{n+2}$,
consider the decomposition
$\hat{d}=\mathcal{D}_{V}^{\hat{d}}+\mathcal{N}_{V}^{\hat{d}}$ for
$\mathcal{D}_{V}^{\hat{d}}$ the metric connection on
$\underline{\C}^{n+2}$ given by the sum of the connections induced
on $V$ and $V^{\perp}$ by $\hat{d}$.

\begin{defn}
A non-degenerate rank $\,4$ subbundle $V$ of $\underline{\C}^{n+2}$
is said to be a central sphere congruence of a
$\hat{d}$-surface $(\Lambda^{1,0},\Lambda^{0,1})$ if
$$\Lambda^{1,0}+\Lambda^{0,1}\subset V$$ and
$$(\mathcal{N}_{V}^{\hat{d}})^{1,0}\Lambda^{0,1}=0=(\mathcal{N}_{V}^{\hat{d}})^{0,1}\Lambda^{1,0}.$$
\end{defn}

\begin{rem}
Let $(\Lambda^{1,0},\Lambda^{0,1})$ be a surface, $\sigma\neq 0$ be
a section of $\Lambda$ and $z$ be a holomorphic chart of $\Sigma$.
Note that $\Lambda^{1,0}+\Lambda^{0,1}\subset V$ establishes
$$(\mathcal{N}_{V})_{\vert\Lambda}=0$$ and then
$\mathcal{N}_{V}^{1,0}\Lambda^{0,1}=0=\mathcal{N}_{V}^{0,1}\Lambda^{1,0}$
reads $\sigma_{z\bar{z}}\in\Gamma V$. Hence, generically (if
$\sigma\wedge\sigma_{z}\neq 0\neq\sigma\wedge\sigma_{\bar{z}}$),
$\Lambda^{1,0}$, $\Lambda^{0,1}$ and $V$ are all determined by
$\Lambda$: $\Lambda^{1,0}=\langle\sigma,\sigma_{z}\rangle$,
$\Lambda^{0,1}=\langle\sigma,\sigma_{\bar{z}}\rangle$ and
$$V=\langle\sigma,\sigma_{z},\sigma_{\bar{z}},\sigma_{z\bar{z}}\rangle.$$
In particular, the complexification of the central sphere congruence
of a real surface $\Lambda$ is the unique central sphere congruence
of the corresponding surface $(\Lambda^{1,0},\Lambda^{0,1})$.
\end{rem}

For further reference:

\begin{lemma}\label{curlyDfacts}
Suppose $V$ is a central sphere congruence of a
$\hat{d}$-surface $(\Lambda^{1,0},\Lambda^{0,1})$. Then
$$(\mathcal{D}_{V}^{\hat{d}})^{1,0}\Gamma\Lambda^{1,0}\subset\Omega^{1,0}\Lambda^{1,0},\,\,\,\,(\mathcal{D}_{V}^{\hat{d}})^{0,1}\Gamma\Lambda^{0,1}\subset\Omega^{0,1}\Lambda^{0,1}.$$
\end{lemma}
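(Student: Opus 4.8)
The plan is to work locally in a holomorphic chart $z$ and reduce everything to a statement about a section $\sigma\neq 0$ of $\Lambda$. First I would record the pointwise description of the bundles: since $(\Lambda^{1,0},\Lambda^{0,1})$ is a $\hat d$-surface, the defining condition $\hat d^{1,0}\Gamma\Lambda\subset\Omega^{1,0}\Lambda^{1,0}$ says exactly that $\hat d_{\delta_z}\sigma\in\Gamma\Lambda^{1,0}$, so $\Lambda^{1,0}=\langle\sigma,\hat d_{\delta_z}\sigma\rangle$ and, conjugating, $\Lambda^{0,1}=\langle\sigma,\hat d_{\delta_{\bar z}}\sigma\rangle$. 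Similarly, the central sphere congruence condition, together with $\Lambda^{1,0}+\Lambda^{0,1}\subset V$ (which already forces $(\mathcal N_V^{\hat d})_{\vert\Lambda}=0$, i.e. $\hat d\Gamma\Lambda\subset\Gamma V$), reads $\hat d_{\delta_z}\hat d_{\delta_{\bar z}}\sigma\in\Gamma V$, as noted in the Remark. So $V=\langle\sigma,\hat d_{\delta_z}\sigma,\hat d_{\delta_{\bar z}}\sigma,\hat d_{\delta_z}\hat d_{\delta_{\bar z}}\sigma\rangle$ generically.

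Next I would translate the claim $(\mathcal D_V^{\hat d})^{1,0}\Gamma\Lambda^{1,0}\subset\Omega^{1,0}\Lambda^{1,0}$ into the statement: for every section $\xi$ of $\Lambda^{1,0}$, the projection $\pi_V(\hat d_{\delta_z}\xi)$ lies in $\Gamma\Lambda^{1,0}$. Since $\Lambda^{1,0}$ is spanned by $\sigma$ and $\hat d_{\delta_z}\sigma$, and since multiplication by smooth functions only produces further terms in $\langle\sigma,\hat d_{\delta_z}\sigma\rangle\subset\Lambda^{1,0}$, it suffices to check this on the two generators. For $\xi=\sigma$: $\hat d_{\delta_z}\sigma\in\Gamma\Lambda^{1,0}\subset\Gamma V$, so $\pi_V(\hat d_{\delta_z}\sigma)=\hat d_{\delta_z}\sigma\in\Gamma\Lambda^{1,0}$, done. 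For $\xi=\hat d_{\delta_z}\sigma$: here $\hat d_{\delta_z}\hat d_{\delta_z}\sigma=\sigma_{zz}$ (in $\hat d$-notation) need not lie in $V$, so I must show its $V$-component lies in $\Lambda^{1,0}$. Write $\hat d_{\delta_z}\hat d_{\delta_z}\sigma=\alpha\sigma+\beta\,\hat d_{\delta_z}\sigma+\gamma\,\hat d_{\delta_{\bar z}}\sigma+\delta\,\hat d_{\delta_z}\hat d_{\delta_{\bar z}}\sigma+(\text{terms in }V^{\perp})$, using that $V$ is non-degenerate so that $\underline{\C}^{n+2}=V\oplus V^\perp$; then $\pi_V(\hat d_{\delta_z}\hat d_{\delta_z}\sigma)=\alpha\sigma+\beta\,\hat d_{\delta_z}\sigma+\gamma\,\hat d_{\delta_{\bar z}}\sigma+\delta\,\hat d_{\delta_z}\hat d_{\delta_{\bar z}}\sigma$, and I need to force $\gamma=\delta=0$. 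The vanishing of these coefficients should come from isotropy: $\Lambda^{1,0}$ being isotropic gives $(\sigma,\sigma)=0$, $(\sigma,\hat d_{\delta_z}\sigma)=0$, $(\hat d_{\delta_z}\sigma,\hat d_{\delta_z}\sigma)=0$; differentiating these with the metric connection $\hat d$ yields, e.g., $(\hat d_{\delta_z}\hat d_{\delta_z}\sigma,\sigma)=0$ and $(\hat d_{\delta_z}\hat d_{\delta_z}\sigma,\hat d_{\delta_z}\sigma)=0$. Pairing the displayed decomposition against a suitable "dual" frame of $V$ — e.g. against $\hat d_{\delta_{\bar z}}\sigma$ and against $\hat d_{\delta_z}\hat d_{\delta_{\bar z}}\sigma$, after noting which inner products among the four generators vanish — should isolate $\gamma$ and $\delta$ and kill them.

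The statement for $(\mathcal D_V^{\hat d})^{0,1}\Gamma\Lambda^{0,1}$ follows by the conjugate argument (swapping $z\leftrightarrow\bar z$ and $1,0\leftrightarrow 0,1$), with no reality assumption needed since the central-sphere and surface conditions are symmetric in the two chiralities.

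\textbf{Main obstacle.} The routine-looking part is the bookkeeping of inner products among $\sigma,\hat d_{\delta_z}\sigma,\hat d_{\delta_{\bar z}}\sigma,\hat d_{\delta_z}\hat d_{\delta_{\bar z}}\sigma$, but the genuine point — and the step I expect to be fiddly — is checking that $V$ really does admit a frame in which pairing is triangular enough to read off that $\pi_V(\hat d_{\delta_z}\hat d_{\delta_z}\sigma)$ has no component along $\Lambda^{0,1}\setminus\Lambda$ nor along the "normal-to-$\Lambda^{(1)}$ inside $V$" direction. Concretely: I must verify that $(\hat d_{\delta_z}\hat d_{\delta_z}\sigma,\,\hat d_{\delta_{\bar z}}\sigma)=0$ and that the component along $\hat d_{\delta_z}\hat d_{\delta_{\bar z}}\sigma$ vanishes; the first is a one-line differentiation of $(\hat d_{\delta_z}\sigma,\hat d_{\delta_{\bar z}}\sigma)$ against the already-known $(\hat d_{\delta_z}\hat d_{\delta_z}\sigma,\cdot)$ relations, but making this clean requires being careful that $\hat d$ is metric (so that $\delta_z(\xi,\eta)=(\hat d_{\delta_z}\xi,\eta)+(\xi,\hat d_{\delta_z}\eta)$ with the complex-bilinear extension of the metric) and that $\delta_z(1,0)$-differentiation interacts correctly with $(1,0)$- and $(0,1)$-pieces. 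If the naive frame is not triangular, the fallback is to work projectively: show directly that $\pi_{\Lambda^{1,0}}^\perp\big(\pi_V(\hat d_{\delta_z}\hat d_{\delta_z}\sigma)\big)$ pairs to zero with everything in $V$, hence vanishes by non-degeneracy of $V$.
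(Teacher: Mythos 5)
Your plan rests on the right mechanism and, once the bookkeeping is done, it closes — but two points deserve attention, and the paper's own argument sidesteps both.

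First, the pairings you single out are not the ones that do the work. Writing $\pi_V(\hat d_{\delta_z}\hat d_{\delta_z}\sigma)=\alpha\sigma+\beta\,\hat d_{\delta_z}\sigma+\gamma\,\hat d_{\delta_{\bar z}}\sigma+\delta\,\hat d_{\delta_z}\hat d_{\delta_{\bar z}}\sigma$ and using the Gram matrix of that frame (whose only nonzero entries are $(\hat d_{\delta_z}\sigma,\hat d_{\delta_{\bar z}}\sigma)=c$, $(\sigma,\hat d_{\delta_z}\hat d_{\delta_{\bar z}}\sigma)=-c$ and the self-pairing of the last vector), pairing against $\sigma$ gives $-\delta c$ and pairing against $\hat d_{\delta_z}\sigma$ gives $\gamma c$; so $\gamma=\delta=0$ follows from $(\hat d_{\delta_z}\hat d_{\delta_z}\sigma,\sigma)=0=(\hat d_{\delta_z}\hat d_{\delta_z}\sigma,\hat d_{\delta_z}\sigma)$, i.e.\ from orthogonality to $\Lambda^{1,0}$ itself, obtained by differentiating the isotropy relations. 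By contrast, the quantity $(\hat d_{\delta_z}\hat d_{\delta_z}\sigma,\hat d_{\delta_{\bar z}}\sigma)$ that you flag as the thing to verify is neither zero for a general lift (it isolates $\beta$, which is generically nonzero) nor needed. Second, your frame $\sigma,\hat d_{\delta_z}\sigma,\hat d_{\delta_{\bar z}}\sigma,\hat d_{\delta_z}\hat d_{\delta_{\bar z}}\sigma$ only spans $V$ generically: the definition of a complexified $\hat d$-surface does not guarantee $\sigma\wedge\hat d_{\delta_z}\sigma\neq0$, nor that these four sections exhaust $V$, so your main plan proves the lemma only on an open dense set.

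Your fallback is in fact the paper's proof, and it is both cleaner and fully general: write $\Lambda^{1,0}=\langle\sigma,\tau\rangle$ with $\sigma\in\Gamma\Lambda$ and $\tau$ an arbitrary complementary generator; then $(\mathcal D_V^{\hat d})^{1,0}\sigma\in\Omega^{1,0}\Lambda^{1,0}$ directly from the two defining conditions, and, since $\mathcal D_V^{\hat d}$ is metric and $\Lambda^{1,0}$ isotropic, $((\mathcal D_V^{\hat d})^{1,0}\tau,\sigma)=-(\tau,(\mathcal D_V^{\hat d})^{1,0}\sigma)=0$ and $((\mathcal D_V^{\hat d})^{1,0}\tau,\tau)=\tfrac12 d^{1,0}(\tau,\tau)=0$. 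The punchline is that $\Lambda^{1,0}$, being isotropic of rank $2$ in the rank $4$ non-degenerate bundle $V$, is maximally isotropic, so $(\Lambda^{1,0})^{\perp}\cap V=\Lambda^{1,0}$ and the orthogonality just established forces $(\mathcal D_V^{\hat d})^{1,0}\tau$ into $\Lambda^{1,0}$. This buys you: no choice of chart, no second derivatives, no Gram matrix, and no genericity hypothesis. I recommend you restructure the write-up around that observation and keep the explicit frame computation, if at all, only as an illustration.
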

\begin{proof}
First of all, observe that, as
$\mathrm{rank}\,\Lambda^{1,0}=\frac{1}{2}\,\mathrm{rank}\,V=\mathrm{rank}\,\Lambda^{0,1}$,
the isotropy of both $\Lambda^{1,0}$ and $\Lambda^{0,1}$ establishes
their maximal isotropy in $V$. Write $\Lambda^{1,0}=\langle\sigma,\tau\rangle$, with
$\sigma\in\Gamma\Lambda$. Since 
$$(\mathcal{D}_{V}^{\hat{d}})^{1,0}\sigma=\pi_{V}\circ\hat{d}^{1,0}\circ\pi_{V}\sigma\in\Omega^{1,0}\Lambda^{1,0},$$
the fact that
$\mathcal{D}_{V}^{\hat{d}}$ is a metric connection, together with the isotropy of $\Lambda^{1,0}$, shows that
$$((\mathcal{D}_{V}^{\hat{d}})^{1,0}\tau,\sigma)=-(\tau,
(\mathcal{D}_{V}^{\hat{d}})^{1,0}\sigma)=0,$$ whereas  
$$((\mathcal{D}_{V}^{\hat{d}})^{1,0}\tau,\tau)=\frac{1}{2}\,d^{1,0}(\tau,\tau)=0.$$
We conclude that
$(\mathcal{D}_{V}^{\hat{d}})^{1,0}\tau\perp\Lambda^{1,0}$ and,
therefore, that $(\mathcal{D}_{V}^{\hat{d}})^{1,0}\tau$ takes values in $\Lambda^{1,0}$. A similar
argument establishes
$(\mathcal{D}_{V}^{\hat{d}})^{0,1}\Gamma\Lambda^{0,1}\subset\Omega^{0,1}\Lambda^{0,1}$.
\end{proof}

\begin{defn}
A non-degenerate bundle $V\subset\underline{\C}^{n+2}$ is said to be
$\hat{d}$-\emph{perturbed harmonic} if there exists a $1$-form $q$
with values in $\wedge^{2}V\oplus \wedge^{2}V^{\perp}$ such that,
for each $\lambda\in\C\backslash \{0\}$, the metric connection
$$\hat{d}^{\lambda ,q}_{V}:=\mathcal{D}_{V}^{\hat{d}}+\lambda
^{-1}(\mathcal{N}_{V}^{\hat{d}})^{1,0}+\lambda(\mathcal{N}_{V}^{\hat{d}})^{0,1}+(\lambda
^{-2}-1)q^{1,0}+(\lambda ^{2}-1)q^{0,1},$$ on
$\underline{\C}^{n+2}$, is flat. In this case, we say that $V$ is
\emph{$(q,\hat{d})$-perturbed harmonic} or, in the case $q=0$,
\emph{$\hat{d}$-harmonic}. In the particular case of $\hat{d}=d$,
$V$ a real bundle and $q$ a real form, we say that $V$ is a
\emph{real $q$-perturbed harmonic bundle}.
\end{defn}

\begin{defn}
A $\hat{d}$-surface $(\Lambda^{1,0},\Lambda^{0,1})$ is said to be
 \emph{$\hat{d}$-constrained Willmore} if it admits a $(q,\hat{d})$-perturbed 
harmonic central sphere congruence with
\begin{equation}\label{eq:multiplierspecifics}
q^{1,0}\in\Omega^{1,0}(\wedge^{2}\Lambda^{0,1}),\,\,\,\,\,\,q^{0,1}\in\Omega^{0,1}(\wedge^{2}\Lambda^{1,0}).
\end{equation}
If $q=0$, we say that $(\Lambda^{1,0},\Lambda^{0,1})$ is
\emph{$\hat{d}$-Willmore}. In the particular case of $\hat{d}=d$,
$(\Lambda^{1,0},\Lambda^{0,1})$ a real surface and $q$ a real form,
we say that $(\Lambda^{1,0},\Lambda^{0,1})$ is a \emph{real
$q$-constrained Willmore surface}.
\end{defn}

From Theorem \ref{CWzerocurv} and Lemma 
\ref{withvswithoutdecomps}, it follows that the real constrained
Willmore surface condition is preserved under the correspondence
$$(\Lambda^{1,0},\Lambda^{0,1})\longleftrightarrow\Lambda^{1,0}\cap\Lambda^{0,1}$$
for real surfaces $(\Lambda^{1,0},\Lambda^{0,1})$, with preservation
of multipliers:
\begin{thm}\cite{BQ}\label{CWzerocurvature}
Suppose $(\Lambda^{1,0},\Lambda^{0,1})$ is a real surface. Then
$\Lambda$ is constrained Willmore if and only if $S$ is
$q$-perturbed harmonic, for some real $1$-form $q$ with
$q^{1,0}\in\Omega^{1,0}(\wedge^{2}\Lambda^{0,1})$.
\end{thm}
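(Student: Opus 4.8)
The statement is essentially a translation of Theorem \ref{CWzerocurv} into the language of complexified surfaces, so the plan is to show that the two zero-curvature connections coincide once one unwinds the definitions. First I would fix a real surface $(\Lambda^{1,0},\Lambda^{0,1})$ with $\Lambda = \Lambda^{1,0}\cap\Lambda^{0,1}$ a real null line bundle, and observe that its central sphere congruence $S$ (in the sense of Theorem \ref{LWiffSharmonic}, so $S = \langle\sigma,\sigma_z,\sigma_{\bar z},\sigma_{z\bar z}\rangle$ for a lift $\sigma$) is precisely the unique $d$-central sphere congruence of $(\Lambda^{1,0},\Lambda^{0,1})$ in the sense of the earlier definition, by the Remark following that definition: $\Lambda^{1,0}+\Lambda^{0,1}\subset S$ forces $\mathcal{N}_{S}|_{\Lambda}=0$, and $\mathcal{N}_S^{1,0}\Lambda^{0,1}=0=\mathcal{N}_S^{0,1}\Lambda^{1,0}$ is equivalent to $\sigma_{z\bar z}\in\Gamma S$, which holds by construction. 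Hence $\mathcal{D}_S^d = \mathcal{D}$ and $\mathcal{N}_S^d = \mathcal{N}$, and the family of connections $d_S^{\lambda,q}$ from the definition of $q$-constrained harmonicity is literally the family $d_q^{\lambda}$ of Theorem \ref{CWzerocurv}, provided $q$ is the same $1$-form in both pictures.

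The remaining point is to match the constraint conditions on $q$. In Theorem \ref{CWzerocurv}, a multiplier is a $1$-form $q\in\Omega^1(\Lambda\wedge\Lambda^{(1)})$; in the constrained-harmonicity definition, $q$ takes values in $\wedge^2 S\oplus\wedge^2 S^{\perp}$, and $q$-constrained Willmore additionally demands $q^{1,0}\in\Omega^{1,0}(\wedge^2\Lambda^{0,1})$, $q^{0,1}\in\Omega^{0,1}(\wedge^2\Lambda^{1,0})$. I would argue the equivalence of the two descriptions of admissible $q$ as follows. For the forward direction (assume $\Lambda$ is constrained Willmore): by Theorem \ref{CWzerocurv} there is a real $q\in\Omega^1(\Lambda\wedge\Lambda^{(1)})$ making $d_q^{\lambda}$ flat. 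Since $\Lambda,\Lambda^{(1)}\subset S$, we have $\Lambda\wedge\Lambda^{(1)}\subset\wedge^2 S\subset\wedge^2 S\oplus\wedge^2 S^{\perp}$, so $q$ is an admissible-type form for constrained harmonicity. Proposition \ref{withvswithoutdecomps}$(i)$ gives $q^{1,0}\in\Omega^{1,0}(\Lambda\wedge\Lambda^{0,1})$ and $q^{0,1}\in\Omega^{0,1}(\Lambda\wedge\Lambda^{1,0})$; since $\Lambda\subset\Lambda^{0,1}$ we get $\Lambda\wedge\Lambda^{0,1}\subset\wedge^2\Lambda^{0,1}$, hence $q^{1,0}\in\Omega^{1,0}(\wedge^2\Lambda^{0,1})$ as required, and symmetrically for $q^{0,1}$. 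Flatness of $d_q^{\lambda}=d_S^{\lambda,q}$ for all $\lambda\in S^1$ extends to all $\lambda\in\C\setminus\{0\}$ by the standard argument that the curvature is a Laurent polynomial in $\lambda$ vanishing on a circle; thus $S$ is $q$-constrained harmonic with the stated decomposition of $q$, and the definition of $\hat d$-constrained Willmore (with $\hat d = d$) is met.

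For the converse, suppose $S$ is $q$-constrained harmonic for a real $q$ with $q^{1,0}\in\Omega^{1,0}(\wedge^2\Lambda^{0,1})$; by reality $q^{0,1}=\overline{q^{1,0}}\in\Omega^{0,1}(\wedge^2\Lambda^{1,0})$. I would first check that $q$ automatically takes values in $\Lambda\wedge\Lambda^{(1)}$: indeed $\wedge^2\Lambda^{0,1}$ is spanned by $\sigma\wedge\sigma_{\bar z}$ (locally), and $\sigma\wedge\sigma_{\bar z}\in\Gamma(\Lambda\wedge\Lambda^{(1)})$ since $\sigma\in\Gamma\Lambda$ and $\sigma_{\bar z}\in\Gamma\Lambda^{(1)}$; so $q^{1,0}$, and likewise $q^{0,1}$, and hence $q=q^{1,0}+q^{0,1}$, is a real form in $\Omega^1(\Lambda\wedge\Lambda^{(1)})$. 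Then flatness of $d_S^{\lambda,q}=d_q^{\lambda}$ for all $\lambda\in\C\setminus\{0\}$ in particular holds for $\lambda\in S^1$, so Theorem \ref{CWzerocurv} applies and $\Lambda$ is a $q$-constrained Willmore surface. I expect the only mild subtlety — the "hard part," such as it is — to be the careful bookkeeping that the $d$-central sphere congruence of the complexified surface is exactly the classical $S$ (so that $\mathcal{D}_S^d,\mathcal{N}_S^d$ agree with $\mathcal{D},\mathcal{N}$) and that the parameter range $S^1$ versus $\C\setminus\{0\}$ makes no difference; both are by now routine, the first via the Remark above and Lemma \ref{curlyDfacts}, the second via the Laurent-polynomial curvature argument.
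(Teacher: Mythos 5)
Your proposal is correct and follows essentially the same route as the paper, which derives this theorem as an immediate consequence of Theorem \ref{CWzerocurv} together with Proposition \ref{withvswithoutdecomps}$(i)$ (giving $q^{1,0}\in\Omega^{1,0}(\Lambda\wedge\Lambda^{0,1})=\Omega^{1,0}(\wedge^{2}\Lambda^{0,1})$). Your additional bookkeeping --- identifying the classical $S$ with the $d$-central sphere congruence of the complexified surface, and extending flatness from $S^{1}$ to $\C\setminus\{0\}$ via the Laurent-polynomial curvature argument --- is sound and merely makes explicit what the paper leaves implicit.
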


\section{Transformations of perturbed harmonic bundles and constrained Willmore surfaces}

Fix a conformal structure $\mathcal{C}$ on $\Sigma$ and consider the
corresponding complex structure on $\Sigma$. Let $\mathrm{Ad}$
denote the adjoint representation of the orthogonal group on the
orthogonal algebra. Note that, given $T\in O(\mathbb{R}^{n+1,1})$
and $u,v\in\R^{n+1,1}$, $$\mathrm{Ad}_{T}(u\wedge v)=Tu\wedge Tv.$$

Let $V$ be a non-degenerate subbundle of $\underline{\C}^{n+2}$ and
$\pi_{V}$ and $\pi_{V^{\perp}}$ denote the orthogonal projections of
$\underline{\C}^{n+2}$ onto $V$ and $V^{\perp}$, respectively, and
$\rho$ denote reflection across $V$, $$\rho=\pi_{V}-\pi_{V^{\perp}}.$$
Let $(\Lambda^{1,0},\Lambda^{0,1})$ be a surface admitting $V$ as a
central sphere congruence. As usual, we write $\Lambda$ for
$\Lambda^{1,0}\cap\Lambda^{0,1}$. Suppose $V$ is $q$-perturbed 
harmonic for some $q\in\Omega^{1}(\wedge^{2}V\oplus
\wedge^{2}V^{\perp})$ satisfying conditions
\eqref{eq:multiplierspecifics}.

\subsection{Spectral deformation}

For each $\lambda\in\C\backslash\{0\}$, the flatness of the metric
connection $d^{\lambda,q}_{V}$ on $\underline{\mathbb C}^{n+2}$
establishes the existence of an isometry
$$\phi^{\lambda,q}_{V}:(\underline{\mathbb C}^{n+2},
d^{\lambda,q}_{V})\rightarrow(\underline{\mathbb C}^{n+2},d)$$ of
bundles, preserving connections, defined on a simply connected
component of $\Sigma$ and unique up to a M\"{o}bius transformation.

\begin{lemma}\label{changeconnection} Let $\hat{d}$ be a flat metric connection
on $\underline{\C}^{n+2}$ and $$\phi:(\underline{\mathbb C}^{n+2},
\hat{d})\rightarrow(\underline{\mathbb C}^{n+2},d)$$ be an isometry
of bundles, preserving connections. Then $V$ is
$(q,\hat{d})$-perturbed harmonic, for some $q$, if and only if
$\phi V$ is $\mathrm{Ad}_{\phi}q$-perturbed harmonic.
\end{lemma}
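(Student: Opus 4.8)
The plan is to show directly that the gauge transformation by $\phi$ intertwines the loop of connections $\hat d^{\lambda,q}_V$ attached to $V$ with the loop $d^{\lambda,\mathrm{Ad}_\phi q}_{\phi V}$ attached to $\phi V$. Since $\phi$ is an isometry of bundles preserving connections, it carries $\hat d$ to $d$ and hence (being orthogonal) carries the orthogonal decomposition $\underline{\C}^{n+2}=V\oplus V^\perp$ to $\phi V\oplus(\phi V)^\perp$, so that $\phi\,\pi_V=\pi_{\phi V}\,\phi$ and likewise for $\pi_{V^\perp}$. From this, conjugation by $\phi$ sends the $\hat d$-induced sum connection $\mathcal{D}^{\hat d}_V$ to the $d$-induced sum connection $\mathcal{D}^{d}_{\phi V}$, and sends the off-diagonal part $\mathcal{N}^{\hat d}_V\in\Omega^1(V\wedge V^\perp)$ to $\mathrm{Ad}_\phi\mathcal{N}^{\hat d}_V\in\Omega^1(\phi V\wedge(\phi V)^\perp)$, which is precisely $\mathcal{N}^{d}_{\phi V}$; the point here is that $\phi\circ\hat d=d\circ\phi$ forces the decomposition $d=\mathrm{Ad}_\phi\mathcal{D}^{\hat d}_V+\mathrm{Ad}_\phi\mathcal{N}^{\hat d}_V$ to agree, block by block, with $d=\mathcal{D}^{d}_{\phi V}+\mathcal{N}^{d}_{\phi V}$. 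The type decomposition into $(1,0)$ and $(0,1)$ parts is respected because $\phi$ is $\C$-linear and the complex structure on $\Sigma$ is fixed throughout. Finally $\mathrm{Ad}_\phi$ sends $\wedge^2 V\oplus\wedge^2 V^\perp$ to $\wedge^2(\phi V)\oplus\wedge^2(\phi V)^\perp$, so $\mathrm{Ad}_\phi q$ has the right form to be a multiplier for $\phi V$.

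Putting these identifications together, conjugation by $\phi$ carries $\hat d^{\lambda,q}_V$ to
$$\mathrm{Ad}_\phi\bigl(\mathcal{D}^{\hat d}_V+\lambda^{-1}(\mathcal{N}^{\hat d}_V)^{1,0}+\lambda(\mathcal{N}^{\hat d}_V)^{0,1}+(\lambda^{-2}-1)q^{1,0}+(\lambda^2-1)q^{0,1}\bigr)$$
acting after $\phi\circ\hat d=d\circ\phi$, which equals
$$\mathcal{D}^{d}_{\phi V}+\lambda^{-1}(\mathcal{N}^{d}_{\phi V})^{1,0}+\lambda(\mathcal{N}^{d}_{\phi V})^{0,1}+(\lambda^{-2}-1)(\mathrm{Ad}_\phi q)^{1,0}+(\lambda^2-1)(\mathrm{Ad}_\phi q)^{0,1}=d^{\lambda,\mathrm{Ad}_\phi q}_{\phi V}.$$
Since flatness of a connection is preserved under gauge transformation by an automorphism, $\hat d^{\lambda,q}_V$ is flat for all $\lambda\in\C\backslash\{0\}$ if and only if $d^{\lambda,\mathrm{Ad}_\phi q}_{\phi V}$ is, which is exactly the assertion that $V$ is $(q,\hat d)$-constrained harmonic if and only if $\phi V$ is $\mathrm{Ad}_\phi q$-constrained harmonic. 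One should also note at the outset that $\phi V$ is non-degenerate (as the image of a non-degenerate bundle under an isometry) so the notions involved make sense.

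I expect the main point requiring care — rather than a genuine obstacle — to be the verification that the \emph{block decomposition} $\hat d=\mathcal{D}^{\hat d}_V+\mathcal{N}^{\hat d}_V$ transforms correctly, i.e.\ that $\mathrm{Ad}_\phi\mathcal{D}^{\hat d}_V$ really is the sum connection $\mathcal{D}^d_{\phi V}$ and not merely some metric connection. This comes down to the elementary but essential observation that $\phi$ being a connection-preserving isometry means $\phi$ intertwines the orthogonal projections, together with the fact that $\mathcal{D}^{\hat d}_V=\pi_V\hat d\pi_V+\pi_{V^\perp}\hat d\pi_{V^\perp}$; once $\phi\pi_V\phi^{-1}=\pi_{\phi V}$ is in hand the rest is a formal computation conjugating each term and using $\phi\hat d\phi^{-1}=d$. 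Everything else — the type decomposition, the behaviour of $\mathrm{Ad}_\phi$ on $\wedge^2$, and the gauge-invariance of flatness — is standard.
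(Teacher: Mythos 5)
Your proposal is correct and follows essentially the same route as the paper: the paper's proof consists precisely of the observation that $\mathcal{D}_{\phi V}=\phi\circ\mathcal{D}^{\hat d}_V\circ\phi^{-1}$ and $\mathcal{N}_{\phi V}=\phi\,\mathcal{N}^{\hat d}_V\,\phi^{-1}$, whence $d^{\lambda,\mathrm{Ad}_\phi q}_{\phi V}=\phi\circ\hat d^{\lambda,q}_V\circ\phi^{-1}$, which is exactly your computation. You merely spell out the intertwining of the orthogonal projections and the gauge-invariance of flatness that the paper leaves implicit.
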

\begin{proof}
It is immediate from the fact that
\begin{equation}\label{eq:curlyNphi}
\mathcal{D}_{\phi
V}=\phi\circ\mathcal{D}_{V}^{\hat{d}}\circ\phi^{-1},\,\,\,\,\mathcal{N}_{\phi
V}=\phi\,\mathcal{N}_{V}^{\hat{d}}\,\phi^{-1}
\end{equation}
and, therefore, $$d_{\phi V}^{\lambda,q}=\phi\circ
\hat{d}\,^{\lambda,\mathrm{Ad}_{\phi^{-1}}q}_{V}\circ\phi^{-1}.$$
\end{proof}

Set
$$q_{\lambda}:=\lambda ^{-2}q^{1,0}+\lambda^{2}q^{0,1},$$for
$\lambda\in\C\backslash\{0\}$. The fact that $q$ takes values in
$\wedge^{2}V\oplus \wedge^{2}V^{\perp}$ establishes, in particular,
$$\mathcal{D}^{d^{\lambda,q}_{V}}_{V}=\mathcal{D}_{V}+(\lambda^{-2}-1)q^{1,0}+(\lambda^{0,1}-1)q^{0,1},$$
whereas
$$\mathcal{N}^{d^{\lambda,q}_{V}}_{V}=\lambda^{-1}\mathcal{N}_{V}^{1,0}+\lambda\mathcal{N}_{V}^{0,1},$$
and, therefore, $$(d^{\lambda
,q}_{V})^{\mu,q_{\lambda}}_{V}=d^{\lambda\mu ,q}_{V},$$for all
$\lambda,\mu\in \C\backslash \{0\}$. From the flatness of
$d^{\lambda,q}_{V}$, for all $\lambda\in\C\backslash \{0\}$, we
conclude that of $(d^{\lambda ,q}_{V})^{\mu,q_{\lambda}}_{V}$, for
all $\lambda,\mu\in \C\backslash \{0\}$ and, therefore, that $V$ is
$d^{\lambda,q}_{V}$-perturbed harmonic, for all
$\lambda\in\backslash\C\{0\}$. We define then a spectral deformation
of $V$ into new perturbed harmonic bundles by setting, for each
$\lambda$ in $\C\{0\}$,
$$V^{\lambda}_{q}:=\phi^{\lambda,q}_{V}V.$$

\begin{thm}\cite{BQ}\label{CHspecdeform}
$V^{\lambda}_{q}$ is
$\mathrm{Ad}_{\phi^{\lambda}_{q}}q_{\lambda}$-perturbed harmonic,
for each $\lambda\in \C\{0\}$.
\end{thm}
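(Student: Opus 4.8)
The plan is to derive the statement directly from the machinery already set up, without a fresh computation.  By construction, $V^{\lambda}_{q}=\phi^{\lambda,q}_{V}V$, where $\phi^{\lambda,q}_{V}\colon(\underline{\C}^{n+2},d^{\lambda,q}_{V})\to(\underline{\C}^{n+2},d)$ is the connection-preserving isometry trivialising the flat connection $d^{\lambda,q}_{V}$.  So I would first invoke Lemma~\ref{changeconnection} with $\hat{d}=d^{\lambda,q}_{V}$ and $\phi=\phi^{\lambda,q}_{V}$: it says that $V$ is $(q',d^{\lambda,q}_{V})$-constrained harmonic (for some form $q'$) if and only if $\phi^{\lambda,q}_{V}V=V^{\lambda}_{q}$ is $\mathrm{Ad}_{\phi^{\lambda,q}_{V}}q'$-constrained harmonic, now with respect to $d$.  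Hence everything reduces to identifying the right $q'$ witnessing the $d^{\lambda,q}_{V}$-constrained harmonicity of $V$.

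The second step is to pin down $q'=q_{\lambda}$.  The key identity, already recorded just before the theorem, is
\[
(d^{\lambda,q}_{V})^{\mu,q_{\lambda}}_{V}=d^{\lambda\mu,q}_{V},\qquad \lambda,\mu\in\C\setminus\{0\},
\]
which rests on the computations $\mathcal{D}^{d^{\lambda,q}_{V}}_{V}=\mathcal{D}_{V}+(\lambda^{-2}-1)q^{1,0}+(\lambda^{2}-1)q^{0,1}$ and $\mathcal{N}^{d^{\lambda,q}_{V}}_{V}=\lambda^{-1}\mathcal{N}^{1,0}_{V}+\lambda\mathcal{N}^{0,1}_{V}$, valid because $q$ takes values in $\wedge^{2}V\oplus\wedge^{2}V^{\perp}$ and therefore contributes only to the $\mathcal{D}_{V}$-part of the splitting relative to $V$.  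Since $d^{\lambda\mu,q}_{V}$ is flat for every choice of $\lambda\mu\in\C\setminus\{0\}$ — that is exactly the hypothesis that $V$ is $q$-constrained harmonic — the family $(d^{\lambda,q}_{V})^{\mu,q_{\lambda}}_{V}$ is flat for all $\mu\in\C\setminus\{0\}$, with $\lambda$ fixed.  By the very definition of constrained harmonicity, this says precisely that $V$ is $(q_{\lambda},d^{\lambda,q}_{V})$-constrained harmonic.  One should also check that $q_{\lambda}=\lambda^{-2}q^{1,0}+\lambda^{2}q^{0,1}$ still takes values in $\wedge^{2}V\oplus\wedge^{2}V^{\perp}$, which is clear since $q^{1,0}$ and $q^{0,1}$ do and the bidegree decomposition of $q_{\lambda}$ is $q_{\lambda}^{1,0}=\lambda^{-2}q^{1,0}$, $q_{\lambda}^{0,1}=\lambda^{2}q^{0,1}$.

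Putting the two steps together: $V$ is $(q_{\lambda},d^{\lambda,q}_{V})$-constrained harmonic, so Lemma~\ref{changeconnection} yields that $V^{\lambda}_{q}=\phi^{\lambda,q}_{V}V$ is $\mathrm{Ad}_{\phi^{\lambda,q}_{V}}q_{\lambda}$-constrained harmonic, which — writing $\phi^{\lambda}_{q}$ for $\phi^{\lambda,q}_{V}$ as in the statement — is exactly the claim.  I do not anticipate a genuine obstacle here: the theorem is essentially a bookkeeping consequence of the cocycle-type relation $(d^{\lambda,q}_{V})^{\mu,q_{\lambda}}_{V}=d^{\lambda\mu,q}_{V}$ plus Lemma~\ref{changeconnection}.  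The only point demanding a little care is the verification that $q_{\lambda}$ has the correct target bundle and that its $(1,0)$- and $(0,1)$-parts are as claimed, so that ``$(q_{\lambda},d^{\lambda,q}_{V})$-constrained harmonic'' is even a meaningful assertion; this is immediate from the formula for $q_{\lambda}$ but should be stated explicitly.
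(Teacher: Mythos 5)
Your proposal is correct and follows essentially the same route as the paper: the text immediately preceding the theorem establishes the cocycle relation $(d^{\lambda,q}_{V})^{\mu,q_{\lambda}}_{V}=d^{\lambda\mu,q}_{V}$ from the computation of $\mathcal{D}^{d^{\lambda,q}_{V}}_{V}$ and $\mathcal{N}^{d^{\lambda,q}_{V}}_{V}$, deduces that $V$ is $(q_{\lambda},d^{\lambda,q}_{V})$-constrained harmonic, and the theorem then follows from Lemma~\ref{changeconnection} exactly as you argue. Your explicit remark that $q_{\lambda}$ still takes values in $\wedge^{2}V\oplus\wedge^{2}V^{\perp}$ with the stated bidegree parts is a worthwhile clarification but not a departure from the paper's argument.
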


A deformation on the level of constrained Willmore surfaces follows:

\begin{thm}\cite{BQ}
For each $\lambda\in\C\backslash\{0\}$,
$(\phi^{\lambda,q}_{V}\,\Lambda^{1,0},\phi^{\lambda,q}_{V}\,\Lambda^{0,1})$
is a $\mathrm{Ad}_{\phi^{\lambda}_{q}}(q_{\lambda})$-constrained
Willmore surface, admitting $V^{\lambda}_{q}$ as a central sphere
congruence. Furthermore, if $(\Lambda^{1,0},\Lambda^{0,1})$ is a
real constrained Willmore surface, then so is
$$\Lambda^{\lambda}_{q}:=\phi^{\lambda,q}_{V}\,\Lambda,$$
for all $\lambda\in S^{1}$.
\end{thm}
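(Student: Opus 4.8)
The plan is to verify the three assertions in turn: that the pair $(\phi^{\lambda,q}_{V}\Lambda^{1,0},\phi^{\lambda,q}_{V}\Lambda^{0,1})$ is again a surface admitting $V^{\lambda}_{q}$ as a central sphere congruence; that this surface is constrained Willmore with the claimed multiplier; and, in the real case for $\lambda\in S^1$, that we recover a genuine real constrained Willmore surface. For the first point I would simply transport structure by the isometry $\phi^{\lambda,q}_{V}$: since $\phi^{\lambda,q}_{V}$ is a vector bundle isomorphism preserving the metric, it sends isotropic rank-$2$ subbundles to isotropic rank-$2$ subbundles, and it preserves ranks of intersections, so $\phi^{\lambda,q}_{V}\Lambda^{1,0}\cap\phi^{\lambda,q}_{V}\Lambda^{0,1}=\phi^{\lambda,q}_{V}\Lambda$ is rank $1$. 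The surface conditions $\hat{d}^{1,0}\Gamma\Lambda^\lambda_q\subset\Omega^{1,0}\Lambda^{1,0}_{\lambda,q}$ etc.\ follow because $\phi^{\lambda,q}_{V}$ carries $d^{\lambda,q}_{V}$ to $d$, and the $(1,0)$/$(0,1)$ decomposition is unaffected since $d^{\lambda,q}_{V}$ differs from $\mathcal{D}_{V}$ by terms respecting the type decomposition restricted to $\Lambda$ (indeed $\mathcal{N}_V|_\Lambda=0$ and $q|_\Lambda=0$ because $q$ takes values in $\wedge^2V\oplus\wedge^2V^\perp$ with $\Lambda\subset V$ and \eqref{eq:multiplierspecifics} placing $q^{1,0},q^{0,1}$ inside $\Lambda^{1,0},\Lambda^{0,1}$). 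The same reasoning, together with \eqref{eq:curlyNphi}, shows $V^\lambda_q$ satisfies the central sphere congruence conditions for the new surface.

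For the second assertion I would invoke Theorem~\ref{CHspecdeform} together with Lemma~\ref{changeconnection}: we already know $V^\lambda_q$ is $\mathrm{Ad}_{\phi^\lambda_q}q_\lambda$-constrained harmonic, so it only remains to check that $\mathrm{Ad}_{\phi^\lambda_q}q_\lambda$ satisfies the multiplier conditions \eqref{eq:multiplierspecifics} relative to the \emph{new} surface, i.e.\ that $(\mathrm{Ad}_{\phi^\lambda_q}q_\lambda)^{1,0}\in\Omega^{1,0}(\wedge^2\phi^\lambda_q\Lambda^{0,1})$ and $(\mathrm{Ad}_{\phi^\lambda_q}q_\lambda)^{0,1}\in\Omega^{0,1}(\wedge^2\phi^\lambda_q\Lambda^{1,0})$. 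Since $q_\lambda=\lambda^{-2}q^{1,0}+\lambda^2 q^{0,1}$ has $(q_\lambda)^{1,0}=\lambda^{-2}q^{1,0}\in\Omega^{1,0}(\wedge^2\Lambda^{0,1})$ and $(q_\lambda)^{0,1}=\lambda^2 q^{0,1}\in\Omega^{0,1}(\wedge^2\Lambda^{1,0})$, and $\mathrm{Ad}_{\phi^\lambda_q}$ acts on $\wedge^2$ by $u\wedge v\mapsto \phi^\lambda_q u\wedge \phi^\lambda_q v$, this is immediate: $\mathrm{Ad}_{\phi^\lambda_q}(\wedge^2\Lambda^{0,1})=\wedge^2(\phi^\lambda_q\Lambda^{0,1})$, and $\phi^\lambda_q=\phi^{\lambda,q}_V$ is complex-linear so it preserves the type decomposition of forms. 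Hence $(\phi^{\lambda,q}_{V}\Lambda^{1,0},\phi^{\lambda,q}_{V}\Lambda^{0,1})$ is $\mathrm{Ad}_{\phi^\lambda_q}(q_\lambda)$-constrained Willmore by definition.

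For the reality statement, suppose $(\Lambda^{1,0},\Lambda^{0,1})$ is a real surface, so $\Lambda$ is a real bundle and $q$ is a real form; I want $\Lambda^\lambda_q=\phi^{\lambda,q}_V\Lambda$ to be real for $\lambda\in S^1$. The key is that for $\lambda\in S^1$ the connection $d^{\lambda,q}_V$ is a \emph{real} connection: writing $\bar\lambda=\lambda^{-1}$ one checks $\overline{\lambda^{-1}\mathcal{N}_V^{1,0}}=\lambda\mathcal{N}_V^{0,1}$ (using that $\mathcal{N}_V$ is real so $\overline{\mathcal{N}_V^{1,0}}=\mathcal{N}_V^{0,1}$) and $\overline{(\lambda^{-2}-1)q^{1,0}}=(\lambda^2-1)q^{0,1}$ (using reality of $q$), so $\overline{d^{\lambda,q}_V}=d^{\lambda,q}_V$. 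Therefore $\phi^{\lambda,q}_V$ may be chosen real — concretely, $\overline{\phi^{\lambda,q}_V}$ is also a connection-preserving isometry $(\underline{\C}^{n+2},d^{\lambda,q}_V)\to(\underline{\C}^{n+2},d)$, hence differs from $\phi^{\lambda,q}_V$ by a (constant) Möbius transformation, which can be absorbed into the choice, making $\phi^{\lambda,q}_V$ commute with conjugation — and then it sends the real bundle $\Lambda$ to a real bundle $\Lambda^\lambda_q$. Combined with the previous paragraph, $\Lambda^\lambda_q$ is then a real constrained Willmore surface; moreover its multiplier $\mathrm{Ad}_{\phi^\lambda_q}(q_\lambda)$ is real for $\lambda\in S^1$ since $\overline{q_\lambda}=\overline{\lambda^{-2}}q^{0,1}+\overline{\lambda^2}q^{1,0}=\lambda^2 q^{0,1}+\lambda^{-2}q^{1,0}=q_\lambda$ and $\phi^\lambda_q$ is real. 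The main obstacle I anticipate is precisely the bookkeeping in this last step: pinning down that the ambiguity in $\phi^{\lambda,q}_V$ (a Möbius transformation) can genuinely be fixed so that reality is preserved — i.e.\ showing the conjugate isometry lands in the same gauge orbit and that the connecting Möbius map is itself real or can be split off — rather than merely that $d^{\lambda,q}_V$ is formally a real connection; everything else is transport of structure by an isometry, which is routine.
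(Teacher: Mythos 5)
Your proposal is correct and follows essentially the same route as the paper: you establish that $q\Lambda=0$ and $\mathcal{N}_{V}\Lambda=0$ so that $d^{\lambda,q}_{V}$ agrees with $d$ on $\Gamma\Lambda$, transport the surface and central-sphere-congruence structure through the isometry, invoke Theorem~\ref{CHspecdeform} for the constrained harmonicity with multiplier $\mathrm{Ad}_{\phi^{\lambda}_{q}}q_{\lambda}$, and use the reality of $d^{\lambda,q}_{V}$ for $\lambda\in S^{1}$ to choose $\phi^{\lambda,q}_{V}$ real. The only point you flag as delicate --- fixing the M\"{o}bius ambiguity so that $\phi^{\lambda,q}_{V}$ is real --- is handled exactly as the paper implicitly does: a real flat metric connection admits a real parallel trivialization of $\underline{\R}^{n+1,1}$ on a simply connected domain, which one then extends complex-linearly.
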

\begin{proof}
By \eqref{eq:multiplierspecifics}, together with the isotropy of
$\Lambda^{i,j}$, for $i\neq j\in\{0,1\}$, we have $q\Lambda=0$. On
the other hand, the centrality of $V$ with respect to
$(\Lambda^{1,0},\Lambda^{0,1})$ gives
$\mathcal{N}_{V}\Lambda=\pi_{V^{\perp}}\circ d\Lambda=0$. Hence
\begin{equation}\label{eq:dlambdacoincideswithdonLambda}
(d^{\lambda,q}_{V})_{\vert{\Gamma\Lambda}}=d_{\vert{\Gamma\Lambda}}
\end{equation}
and we conclude that
$(\phi^{\lambda,q}_{V}\,\Lambda^{1,0},\phi^{\lambda,q}_{V}\,\Lambda^{0,1})$
is still a surface. Suppose, furthermore, that
$(\Lambda^{1,0},\Lambda^{0,1})$ is a real $q$-constrained Willmore
surface. Given $\lambda\in S^{1}$, $d^{\lambda}_{q}$ is real, so
that we can choose $\phi^{\lambda,q}_{V}$ to be real, in which case
$$\overline{\phi^{\lambda,q}_{V}\Lambda}=\phi^{\lambda,q}_{V}\Lambda,$$
$\Lambda^{\lambda}_{q}$ is a real surface. It is obvious, on the
other hand, that as $V$ is a central sphere congruence of
$(\Lambda^{1,0},\Lambda^{0,1})$, $\phi^{\lambda,q}_{V}\,V$ is a
central sphere congruence of
$(\phi^{\lambda,q}_{V}\,\Lambda^{1,0},\phi^{\lambda,q}_{V}\,\Lambda^{0,1})$.
Theorem \ref{CHspecdeform} completes the proof.
\end{proof}

Note that spectral deformation corresponding to the zero multiplier
preserves the class of Willmore surfaces. 

This spectral deformation of real constrained Willmore surfaces coincides, up to
reparametrization, with the one presented in \cite{SD}, in terms of
the \textit{Schwarzian derivative} and the \textit{Hopf
differential} (see \cite[Section 6.4.1]{thesis}).

An alternative perspective on this spectral deformation of
perturbed harmonic bundles and constrained Willmore surfaces is that of a
change of flat connection on $\underline{\C}^{n+2}$: if
$V\subset(\underline{\C}^{n+2},d)$ is perturbed harmonic, then so
is $V\subset(\underline{\C}^{n+2},d^{\lambda,q}_{V})$, as well as,
if $(\Lambda^{1,0}, \Lambda^{0,1})$ is constrained Willmore [with
respect to $d$] then $(\Lambda^{1,0}, \Lambda^{0,1})$ is still constrained Willmore with respect
to $d^{\lambda,q}_{V}$, for all $\lambda\in\C\backslash\{0\}$. In
the real case, this is the interpretation of loop group theory in
\cite{burstall+calderbank}.

\subsection{Dressing action}\label{dressingaction}
We use a version of the dressing action theory of Terng and
Uhlenbeck \cite{terng+uhlenbeck} to build transformations of $V$ into new
perturbed harmonic bundles and thereafter transformations of
$(\Lambda^{1,0},\Lambda^{0,1})$ into new constrained Willmore
surfaces. For that, we give conditions on a dressing
$r(\lambda)\in\Gamma(O(\underline{\mathbb C}^{n+2}))$ such that the
gauging $r(\lambda)\circ d^{\lambda,q}_{V}\circ r(\lambda)^{-1}$ of
$d^{\lambda,q}_{V}$ by $r(\lambda)$ establishes the perturbed 
harmonicity of some bundle $\hat{V}$ from the perturbed 
harmonicity of $V$.\newline

The $\mathcal{D}_{V}$-parallelness of $V$ and $V^{\perp}$, together
with the fact that $\mathcal{N}_{V}$ intertwines $V$ and
$V^{\perp}$, whereas $q$ preserves them, makes clear that
\begin{equation}\label{eq:rholam}
d^{-\lambda,q}_{V}=\rho\circ d_{V}^{\lambda,q}\circ\rho^{-1},
\end{equation}
for $\lambda\in\C\backslash \{0\}$. Suppose we have
$r(\lambda)\in\Gamma(O(\underline{\C}^{n+2}))$ such that
$\lambda\mapsto r(\lambda)$ is rational in $\lambda$, $r$ is
holomorphic and invertible at $\lambda=0$ and $\lambda=\infty$ and
twisted in the sense that
\begin{equation}\label{eq:RcommuteRho}
\rho\,r(\lambda)\,\rho^{-1}=r(-\lambda),
\end{equation}
for $\lambda\in\mathrm{dom}(r)$. In particular, it follows that both
$r(0)$ and $r(\infty)$ commute with $\rho$, and, therefore, that
\begin{equation}\label{eq:r0rinfpreserveVperp}
r(0)_{\vert_{V}},r(\infty)_{\vert_{V}}\in\Gamma(O(V)).
\end{equation}
Define $\hat{q}\in\Omega^{1}(\wedge^{2}V\oplus \wedge^{2}V^{\perp})$
by setting
$$\hat{q}^{1,0}:=\mathrm{Ad}_{r(0)}q^{1,0},\,\,\,\,\,\,\hat{q}\,^{0,1}:=\mathrm{Ad}_{r(\infty)}q^{0,1}.$$
Define a new family of metric connections on $\underline{\C}^{n+2}$
by setting
$$\hat{d}^{\lambda,\hat{q}}_{V}:=r(\lambda)\circ
d^{\lambda,q}_{V}\circ r(\lambda)^{-1}.$$ Suppose that there exists
a holomorphic extension of
$\lambda\mapsto\hat{d}^{\lambda,\hat{q}}_{V}$ to
$\lambda\in\C\backslash\{0\}$ through metric connections on
$\underline{\C}^{n+2}$. We shall see later how to construct such
$r=r(\lambda)$, but assume, for the moment, that we have got one. In
that case, as we, crucially, verify next, the notation
$\hat{d}^{\lambda,\hat{q}}_{V}$ is not merely formal:
\begin{prop}\cite{BQ}\label{cruxh}
$$\hat{d}^{\lambda,\hat{q}}_{V}=\mathcal{D}_{V}^{\hat{d}}+\lambda^{-1}(\mathcal{N}_{V}^{\hat{d}})^{1,0}+\lambda(\mathcal{N}_{V}^{\hat{d}})^{0,1}+(\lambda^{-2}-1)\hat{q}^{1,0}+(\lambda^{2}-1)\hat{q}^{0,1},$$
for the flat metric connection
$\hat{d}:=\hat{d}^{1,\hat{q}}_{V}=\mathrm{lim}_{\lambda\rightarrow
1}r(\lambda)\circ d^{\lambda,q}_{V}\circ r(\lambda)^{-1}$ and
$\lambda\in\C\backslash \{0\}$.
\end{prop}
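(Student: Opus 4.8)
The plan is to verify the asserted formula for $\hat{d}^{\lambda,\hat{q}}_{V}$ by exploiting the structure of $d^{\lambda,q}_{V}$ as a Laurent polynomial in $\lambda$ (and $\lambda^{-1}$) with connection-valued coefficients, the twisting condition \eqref{eq:RcommuteRho}, and a uniqueness argument for the extension. Concretely, write $\hat{d}^{\lambda,\hat{q}}_{V}=r(\lambda)\circ d^{\lambda,q}_{V}\circ r(\lambda)^{-1}$ and observe that, since $r$ and $r^{-1}$ are holomorphic and invertible at $\lambda=0$ and $\lambda=\infty$, the right-hand side is a priori a function of $\lambda$ holomorphic on $\C\backslash\{0\}$ with at worst poles (of controlled order, namely order $2$) at $\lambda=0$ and $\lambda=\infty$ once we have the assumed holomorphic extension through metric connections. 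So $\lambda\mapsto\hat{d}^{\lambda,\hat{q}}_{V}$, as a difference from the trivial connection, is a $\mathrm{End}(\underline{\C}^{n+2})$-valued $1$-form depending rationally on $\lambda$ with possible poles only at $0$ and $\infty$; hence it is a finite Laurent polynomial $\sum_{k=-2}^{2}\lambda^{k}A_{k}$ for $1$-forms $A_{k}$.

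Next I would pin down the coefficients $A_k$ by evaluating residues/leading terms at $\lambda=0$ and $\lambda=\infty$ and using the twisting. At $\lambda\to 0$: $d^{\lambda,q}_{V}=\lambda^{-2}q^{1,0}+\lambda^{-1}(\mathcal{N}_{V})^{1,0}+(\mathcal{D}_{V}-q^{1,0}-q^{0,1})+\cdots$, and conjugating by $r(\lambda)=r(0)+O(\lambda)$ shows $A_{-2}=\mathrm{Ad}_{r(0)}q^{1,0}=\hat{q}^{1,0}$ and similarly, peeling off the next order, $A_{-1}=\mathrm{Ad}_{r(0)}(\mathcal{N}_{V})^{1,0}$. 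Symmetrically, at $\lambda\to\infty$, $A_{2}=\mathrm{Ad}_{r(\infty)}q^{0,1}=\hat{q}^{0,1}$ and $A_{1}=\mathrm{Ad}_{r(\infty)}(\mathcal{N}_{V})^{0,1}$. The twisting \eqref{eq:RcommuteRho} combined with \eqref{eq:rholam} — i.e.\ $d^{-\lambda,q}_{V}=\rho\circ d^{\lambda,q}_{V}\circ\rho^{-1}$ — forces $\rho\circ\hat{d}^{-\lambda,\hat{q}}_{V}\circ\rho^{-1}=\hat{d}^{\lambda,\hat{q}}_{V}$, so each $A_k$ has the parity of $\lambda^k$ under $\rho$-conjugation; in particular $A_{-1},A_{1}$ anticommute with $\rho$ hence are $\mathcal{N}_V^{\hat d}$-type (off-diagonal in the $V\oplus V^\perp$ splitting), while $A_{-2},A_0,A_2$ commute with $\rho$ (block-diagonal). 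To identify $A_{-1}$ and $A_1$ with $(\mathcal{N}_V^{\hat d})^{1,0}$ and $(\mathcal{N}_V^{\hat d})^{0,1}$ where $\hat d:=\hat d^{1,\hat q}_V$, and $A_0$ with $\mathcal{D}_V^{\hat d}-\hat q^{1,0}-\hat q^{0,1}$, I would set $\lambda=1$ to read off $\hat d=\sum_k A_k=A_0+A_{-1}+A_1+A_{-2}+A_2$, then decompose $\hat d$ into its $\mathcal{D}_V^{\hat d}$ and $\mathcal{N}_V^{\hat d}$ parts using the $V\oplus V^\perp$ splitting and the type ($(1,0)$ vs $(0,1)$) decomposition: the $\rho$-off-diagonal part of $\hat d$ is $\mathcal{N}_V^{\hat d}=A_{-1}+A_1$, and matching $(1,0)$- and $(0,1)$-types (the $\rho$-off-diagonal pieces $A_{-1},A_1$ being of pure type by the same residue computation) gives $A_{-1}=(\mathcal{N}_V^{\hat d})^{1,0}$, $A_1=(\mathcal{N}_V^{\hat d})^{0,1}$; and then $A_0=\mathcal{D}_V^{\hat d}-\hat q^{1,0}-\hat q^{0,1}$ since $A_0+A_{-2}+A_2$ is the $\rho$-diagonal part of $\hat d$, which is $\mathcal{D}_V^{\hat d}$, and $A_{-2}+A_2=\hat q^{1,0}+\hat q^{0,1}$. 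Substituting $A_{-2}=\hat q^{1,0}$, $A_2=\hat q^{0,1}$, $A_0=\mathcal{D}_V^{\hat d}-\hat q^{1,0}-\hat q^{0,1}$ into $\sum_k\lambda^k A_k$ yields exactly the claimed expression.

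The main obstacle I expect is the bookkeeping at the two poles: one must genuinely show that conjugation by $r(\lambda)$ does not create poles of order higher than $2$, and that the order-$2$ and order-$1$ terms at each of $0,\infty$ are captured purely by $r(0)$ resp.\ $r(\infty)$ (the higher Taylor coefficients of $r$ contributing only at lower order), which is where the holomorphy and invertibility of $r$ at $0$ and $\infty$, and the assumed holomorphic extension through metric connections on $\C\backslash\{0\}$, are all used. Once the Laurent structure $\sum_{k=-2}^{2}\lambda^k A_k$ is established and the residues computed, the parity argument from twisting and the single evaluation at $\lambda=1$ are formal. I would also remark that flatness of $\hat d$ is inherited: $\hat d^{\lambda,\hat q}_V$ is flat for every $\lambda\in\C\backslash\{0\}$ because it is gauge-equivalent to the flat $d^{\lambda,q}_V$ on the overlap and then extends holomorphically, so in particular $\hat d=\hat d^{1,\hat q}_V$ is a flat metric connection, justifying the final clause of the statement.
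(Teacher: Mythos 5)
Your argument is essentially the paper's: expand $\hat{d}^{\lambda,\hat q}_V$ as a Laurent polynomial $\sum_{k=-2}^{2}\lambda^{k}A_{k}$ using the holomorphy and invertibility of $r$ at $0$ and $\infty$ together with the order-two poles of $d^{\lambda,q}_V$ there, read off $A_{-2}=\hat q^{1,0}$ and $A_{2}=\hat q^{0,1}$ from the leading terms, and then use the twisting identity $\hat d^{-\lambda,\hat q}_V=\rho\circ\hat d^{\lambda,\hat q}_V\circ\rho^{-1}$ at $\lambda=1$ to identify the $\rho$-anticommuting part $A_{-1}+A_{1}$ with $\mathcal{N}^{\hat d}_V$ and the $\rho$-commuting part with $\mathcal{D}^{\hat d}_V+\hat q^{1,0}+\hat q^{0,1}$; the paper does exactly this, merely splitting into $(1,0)$- and $(0,1)$-parts at the outset instead of invoking parity for all coefficients at once. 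One correction: your intermediate formulas $A_{-1}=\mathrm{Ad}_{r(0)}\mathcal{N}_V^{1,0}$ and $A_{1}=\mathrm{Ad}_{r(\infty)}\mathcal{N}_V^{0,1}$ are wrong, because conjugating the order-two pole $\lambda^{-2}q^{1,0}$ by $r(\lambda)=r(0)+O(\lambda)$ also contributes the derivative term $\tfrac{d}{d\lambda}\vert_{\lambda=0}\,\mathrm{Ad}_{r(\lambda)}q^{1,0}$ to the $\lambda^{-1}$ coefficient; the correct expression is exactly the paper's equation \eqref{eq:calNhatd0,1}, which is needed later in the proof of Theorem \ref{eq:thm8.4.2paraja}. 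Since you ultimately identify $A_{\pm1}$ via the parity-and-type argument rather than via these explicit formulas, your proof is unaffected, but those two formulas should be deleted or corrected.
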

\begin{proof}
The fact that $r$ is holomorphic and invertible at $\lambda=0$ and that 
$(d^{\lambda,q}_{V})^{0,1}=\mathcal{D}_{V}^{0,1}+\lambda
\mathcal{N}_{V}^{0,1}+(\lambda^{2}-1)q^{0,1}$ is holomorphic on $\C$
establishes that the connection
$$(\hat{d}^{\lambda,\hat{q}}_{V})^{0,1}=r(\lambda)\circ(d^{\lambda,q}_{V})^{0,1}\circ
r(\lambda)^{-1},$$ which admits a holomorphic extension to
$\lambda\in\C\backslash\{0\}$, admits, furthermore, a holomorphic
extension to $\lambda\in\C$. Thus, locally,
$$(\hat{d}^{\lambda,\hat{q}}_{V})^{0,1}=A_{0}^{0,1}+\sum_{i\geq
1}\lambda^{i}A_{i}^{0,1},$$ with $A_{0}$ connection and
$A_{i}\in\Omega^{1}(o(\underline{\C}^{n+2}))$, for all $i$.
Considering then limits of
$$\lambda^{-2}A_{0}^{0,1}+\sum_{i\geq
1}\lambda^{i-2}A_{i}^{0,1}=r(\lambda)\circ
(\lambda^{-2}\mathcal{D}_{V}^{0,1}+\lambda^{-1}\mathcal{N}_{V}^{0,1}+(1-\lambda^{-2})q^{0,1})\circ
r(\lambda)^{-1},$$when $\lambda$ goes to infinity, we get
$$A_{2}^{0,1}+\mathrm{lim}_{\lambda\rightarrow \infty}\sum_{i\geq
3}\lambda^{i-2}A_{i}^{0,1}=\mathrm{Ad}_{r(\infty)}\,q^{0,1},$$ which
shows that $A_{i}^{0,1}=0$, for all $i\geq 3$, and that
$A_{2}^{0,1}=\hat{q}^{0,1}$. Considering now limits of
$$A_{0}^{0,1}+\lambda A_{1}^{0,1}+\lambda^{2}\hat{q}^{0,1}=r(\lambda)\circ
(\mathcal{D}_{V}^{0,1}+\lambda\mathcal{N}_{V}^{0,1}+(\lambda^{2}-1)q^{0,1})\circ
r(\lambda)^{-1},$$when $\lambda$ goes to $0$, we conclude that
$$A_{0}^{0,1}=r(0)\circ(\mathcal{D}_{V}^{0,1}-q^{0,1})\circ
r(0)^{-1}$$ and, therefore, that
$$(\hat{d}^{\lambda,\hat{q}}_{V})^{0,1}=r(0)\circ(\mathcal{D}_{V}^{0,1}-q^{0,1})\circ
r(0)^{-1} +\lambda A_{1}^{0,1}+\lambda^{2}\hat{q}^{0,1}.$$ As for
$$(\hat{d}^{\lambda,\hat{q}}_{V})^{1,0}=r(\lambda)\circ
(\mathcal{D}_{V}^{1,0}+\lambda^{-1}\mathcal{N}^{1,0}+(\lambda^{-2}-1)q^{1,0})\circ
r(\lambda)^{-1},$$ which has a pole at $\lambda=0$, we have, for
$\lambda$ away from $0$,
\begin{equation}\label{eq:dhat01holomwithpole0}
\sum_{i\geq 1}\lambda^{-i}A_{-i}^{1,0}+A^{1,0}_{0}+\sum_{i\geq
1}\lambda^{i}A_{i}^{1,0}=r(\lambda)\circ
(\mathcal{D}_{V}^{1,0}+\lambda^{-1}\mathcal{N}^{1,0}+(\lambda^{-2}-1)q^{1,0})\circ
r(\lambda)^{-1},
\end{equation}
with $A_{-i}^{1,0}\in\Omega^{1}(o(\underline{\C}^{n+2}))$, for all
$i\geq 1$. Considering limits of \eqref{eq:dhat01holomwithpole0}
when $\lambda$ goes to infinity, shows that $A_{i}^{1,0}=0$, for all
$i\geq 1$, and that
$$A^{1,0}_{0}=r(\infty)\circ(\mathcal{D}_{V}^{1,0}-q^{1,0})\circ
r(\infty)^{-1}.$$ Multiplying then both members of equation
\eqref{eq:dhat01holomwithpole0} by $\lambda^{2}$ and considering
limits when $\lambda$ goes to $0$, we conclude that
$A^{1,0}_{-2}=\hat{q}^{1,0}$ and that $A_{-i}^{1,0}=0$, for all
$i\geq 3$, and, ultimately, that
$$(\hat{d}^{\lambda,\hat{q}}_{V})^{1,0}=r(\infty)\circ(\mathcal{D}^{1,0}_{V}-q^{1,0})\circ
r(\infty)^{-1}+\lambda^{-1}A_{-1}^{1,0}+\lambda^{-2}\hat{q}^{1,0}.$$
Thus
\begin{eqnarray*}
\hat{d}_{V}^{\lambda,\hat{q}}&=&r(0)\circ(\mathcal{D}_{V}^{0,1}-q^{0,1}+q^{1,0})\circ r(0)^{-1}\\&& \mbox{}+r(\infty)\circ(\mathcal{D}^{1,0}_{V}-q^{1,0}+q^{0,1})\circ r(\infty)^{-1}\\
&& \mbox{}+ \lambda^{-1}A_{-1}^{1,0}+\lambda
A_{1}^{0,1}+(\lambda^{-2}-1)\hat{q}^{1,0}+(\lambda^{2}-1)\hat{q}^{0,1},
\end{eqnarray*}
for $\lambda\in\C\backslash \{0\}$, and, in particular,
$$\hat{d}=r(0)\circ(\mathcal{D}_{V}^{0,1}-q^{0,1}+q^{1,0})\circ r(0)^{-1}+r(\infty)\circ(\mathcal{D}^{1,0}_{V}-q^{1,0}+q^{0,1})\circ r(\infty)^{-1}+
A_{-1}^{1,0}+A_{1}^{0,1}.$$ The fact that $r(0)$ and $r(\infty)$
(and so $r(0)^{-1}$ and $r(\infty)^{-1}$), as well as $q$, preserve
$V$ and $V^{\perp}$, together with the
$\mathcal{D}_{V}$-parallelness of $V$ and of $V^{\perp}$, shows that
$\hat{d}-(A_{-1}^{1,0}+A_{1}^{0,1})$ preserves $\Gamma(V)$ and
$\Gamma(V^{\perp})$. On the other hand, equations \eqref{eq:rholam}
and \eqref{eq:RcommuteRho} combine to give
$$\hat{d}^{-\lambda,\hat{q}}_{V}=\rho\circ\hat{d}^{\lambda,\hat{q}}_{V}\circ\rho^{-1},$$
for all $\lambda\in\C\backslash\{0\}$ away from the poles of $r$ and
then, by continuity, on all of $\C\backslash\{0\}$. The particular
case of $\lambda=1$ gives $\rho
(A_{-1}^{1,0}+A_{1}^{0,1})_{\vert_{V}}=-(A_{-1}^{1,0}+A_{1}^{0,1})_{\vert_{V}}$
and $\rho
(A_{-1}^{1,0}+A_{1}^{0,1})_{\vert_{V^{\perp}}}=-(A_{-1}^{1,0}+A_{1}^{0,1})_{\vert_{V^{\perp}}}$,
showing that $$A_{-1}^{1,0}+A_{1}^{0,1}\in\Omega^{1}(V\wedge
V^{\perp}).$$ We conclude that
\begin{equation}\label{eq:mathcalDdosr}
r(0)\circ(\mathcal{D}_{V}^{0,1}-q^{0,1}+q^{1,0})\circ
r(0)^{-1}+r(\infty)\circ(\mathcal{D}^{1,0}_{V}-q^{1,0}+q^{0,1})\circ
r(\infty)^{-1}=\mathcal{D}_{V}^{\hat{d}}
\end{equation}
and $$A_{-1}^{1,0}=(\mathcal{N}_{V}^{\hat{d}})^{1,0},\,\,\,
A_{1}^{0,1}=(\mathcal{N}_{V}^{\hat{d}})^{0,1},$$ completing the
proof.
\end{proof}

The flatness of $d^{\lambda,q}_{V}$ for all $\lambda\in\C\backslash
\{0\}$ establishes that of $\hat{d}^{\lambda,\hat{q}}_{V}$, for all
non-zero $\lambda$ away from the poles of $r$ and then, by
continuity, for all $\lambda\in\C\backslash\{0\}$. By Proposition
\ref{cruxh}, we conclude that $V$ is $(\hat{q},\hat{d})$-perturbed 
harmonic. Suppose $1\in\mathrm{dom}(r)$. By Lemma
\ref{changeconnection}, it follows that:
\begin{thm}\cite{BQ}\label{CHtransf}
$r(1)^{-1}V$ is a $\mathrm{Ad}_{r(1)^{-1}}\,\hat{q}$-perturbed 
harmonic bundle.
\end{thm}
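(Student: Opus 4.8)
The plan is to deduce Theorem~\ref{CHtransf} directly from the two results immediately preceding it, namely Proposition~\ref{cruxh} and Lemma~\ref{changeconnection}. The key observation is that Proposition~\ref{cruxh} identifies the gauged family $\hat{d}^{\lambda,\hat{q}}_{V}$ as being exactly of the form $\hat{d}^{\lambda,\hat{q}}_{V}$ built from the flat metric connection $\hat{d}=\hat{d}^{1,\hat{q}}_{V}$ and the $1$-form $\hat{q}$ (which by construction takes values in $\wedge^{2}V\oplus\wedge^{2}V^{\perp}$, since $r(0)$ and $r(\infty)$ preserve both $V$ and $V^{\perp}$ by \eqref{eq:r0rinfpreserveVperp}). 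Combined with the flatness of $\hat{d}^{\lambda,\hat{q}}_{V}$ for all $\lambda\in\C\backslash\{0\}$ — which follows from the flatness of $d^{\lambda,q}_{V}$ for all such $\lambda$, since gauging preserves flatness, together with a continuity argument across the poles of $r$ — this says precisely that $V$ is $(\hat{q},\hat{d})$-constrained harmonic.

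First I would record, as already noted in the paragraph preceding the theorem, that $V$ is $(\hat{q},\hat{d})$-constrained harmonic with $\hat{d}$ a flat metric connection on $\underline{\C}^{n+2}$. Next, since $1\in\mathrm{dom}(r)$, the operator $r(1)\in\Gamma(O(\underline{\C}^{n+2}))$ is a well-defined bundle isometry, and $\phi:=r(1)$ gives an isometry $\phi:(\underline{\C}^{n+2},\hat{d})\rightarrow(\underline{\C}^{n+2},d)$ preserving connections, because $\hat{d}=\hat{d}^{1,\hat{q}}_{V}=r(1)\circ d^{1,q}_{V}\circ r(1)^{-1}$ and $d^{1,q}_{V}=d$ (evaluate the defining formula for $d^{\lambda,q}_{V}$ at $\lambda=1$: the coefficients $(\lambda^{-2}-1)$ and $(\lambda^{2}-1)$ vanish, and $\mathcal{D}_{V}+\mathcal{N}_{V}^{1,0}+\mathcal{N}_{V}^{0,1}=\mathcal{D}_{V}+\mathcal{N}_{V}=d$). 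Hence $\phi$ conjugates $\hat{d}$ to $d$.

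Then I would apply Lemma~\ref{changeconnection} with this $\phi$, in the direction: $V$ is $(\hat{q},\hat{d})$-constrained harmonic if and only if $\phi^{-1}V=r(1)^{-1}V$ is $\mathrm{Ad}_{\phi^{-1}}\hat{q}$-constrained harmonic, i.e.\ $\mathrm{Ad}_{r(1)^{-1}}\hat{q}$-constrained harmonic. (Strictly, Lemma~\ref{changeconnection} is stated for $\phi$ mapping the given connection to $d$; applying it with the isometry $\phi^{-1}$ in the role of the lemma's $\phi$ requires that source and target be swapped, but this is symmetric, or one simply uses the ``only if'' direction of the lemma as stated, reading $\phi V$ there as $r(1)^{-1}V$ and $\hat{d}$ there as $\hat{d}$ here — the point being the intertwining relations \eqref{eq:curlyNphi}.) This yields exactly the assertion that $r(1)^{-1}V$ is a $\mathrm{Ad}_{r(1)^{-1}}\hat{q}$-constrained harmonic bundle.

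The main obstacle is not in the logical chain, which is short, but in being careful about the two technical points already flagged: first, that the holomorphic extension hypothesis on $\lambda\mapsto\hat{d}^{\lambda,\hat{q}}_{V}$ is genuinely in force (this is an assumption at this stage, to be discharged later when $r$ is constructed), and second, the continuity argument extending flatness of $\hat{d}^{\lambda,\hat{q}}_{V}$ from $\C\backslash(\{0\}\cup\mathrm{poles}(r))$ to all of $\C\backslash\{0\}$ — since $\hat{d}^{\lambda,\hat{q}}_{V}$ depends holomorphically on $\lambda$ there by Proposition~\ref{cruxh}, its curvature does too, vanishes on a dense open set, hence vanishes identically. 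Everything else is bookkeeping with the $\mathrm{Ad}$-equivariance of the construction.
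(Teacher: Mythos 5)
Your argument is correct and is essentially the paper's own proof: the paper likewise deduces the theorem from the flatness of $\hat{d}^{\lambda,\hat{q}}_{V}$ (gauging plus continuity across the poles of $r$), Proposition~\ref{cruxh} identifying this family as the constrained-harmonicity family for $(V,\hat{q},\hat{d})$, and Lemma~\ref{changeconnection} applied to the connection-preserving isometry $r(1)^{-1}:(\underline{\C}^{n+2},\hat{d})\rightarrow(\underline{\C}^{n+2},d)$. The only nit is that the isometry playing the role of $\phi$ in Lemma~\ref{changeconnection} is $r(1)^{-1}$ rather than $r(1)$, since $\hat{d}=r(1)\circ d\circ r(1)^{-1}$; you in effect acknowledge and correct this in your parenthetical, so the logic goes through unchanged.
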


Note that this transformation preserves the harmonicity condition.

A transformation on the level of constrained Willmore surfaces
follows, with some extra condition, as we shall see next. Set
$$\hat{\Lambda}^{1,0}:=r(\infty)\Lambda^{1,0},\,\,\,\,\,\,\hat{\Lambda}^{0,1}:=r(0)\Lambda^{0,1}$$
and $$\hat{\Lambda}=\hat{\Lambda}^{1,0}\cap \hat{\Lambda}^{0,1}.$$
Suppose, furthermore, that
\begin{equation}\label{eq:condondet}
\mathrm{det}\,r(0)_{\vert_{V}}=\mathrm{det}\,r(\infty)_{\vert_{V}}.
\end{equation}
Then:

\begin{thm}\cite{BQ}\label{eq:thm8.4.2paraja}
$(r(1)^{-1}\hat{\Lambda}^{1,0},r(1)^{-1}\hat{\Lambda}^{0,1})$ is a
$\mathrm{Ad}_{r(1)^{-1}}\hat{q}$-constrained Willmore surface
admitting $r(1)^{-1}V$ as a central sphere congruence.
\end{thm}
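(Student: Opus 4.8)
The plan is to prove the statement first for the flat metric connection $\hat d := \hat d^{1,\hat q}_V$ supplied by Proposition \ref{cruxh} and then to transport it to $d$. The transport is essentially free: evaluating the definition of $d^{\lambda,q}_V$ at $\lambda = 1$ gives $d^{1,q}_V = \mathcal D_V + \mathcal N_V = d$, so $\hat d = r(1)\,d\,r(1)^{-1}$ and $\phi := r(1)^{-1}$ is an isometry $(\underline{\C}^{n+2},\hat d)\to(\underline{\C}^{n+2},d)$ preserving connections. Such a $\phi$ acts fibrewise, hence preserves ranks, isotropy, intersections and the $(1,0)/(0,1)$-splitting of forms; combined with \eqref{eq:curlyNphi} it therefore carries a $\hat d$-surface admitting a $\hat d$-central sphere congruence $V$, with a multiplier $\hat q$ satisfying \eqref{eq:multiplierspecifics}, to the surface $(\phi\hat\Lambda^{1,0},\phi\hat\Lambda^{0,1})$ admitting $\phi V$ as a central sphere congruence and $\mathrm{Ad}_\phi\hat q$ as a multiplier (Lemma \ref{changeconnection} supplying the constrained harmonicity, already recorded as Theorem \ref{CHtransf}). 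Since $\phi V = r(1)^{-1}V$ and $\mathrm{Ad}_\phi\hat q = \mathrm{Ad}_{r(1)^{-1}}\hat q$, this is precisely the assertion, so it suffices to prove the three $\hat d$-statements for $(\hat\Lambda^{1,0},\hat\Lambda^{0,1})$.

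The multiplier condition is immediate: $\hat q^{1,0} = \mathrm{Ad}_{r(0)}q^{1,0}\in\Omega^{1,0}(\wedge^2 r(0)\Lambda^{0,1}) = \Omega^{1,0}(\wedge^2\hat\Lambda^{0,1})$, and symmetrically $\hat q^{0,1}\in\Omega^{0,1}(\wedge^2\hat\Lambda^{1,0})$, using $\mathrm{Ad}_T(u\wedge v) = Tu\wedge Tv$ and \eqref{eq:multiplierspecifics} for $(\Lambda^{1,0},\Lambda^{0,1})$. That $\hat\Lambda^{1,0} = r(\infty)\Lambda^{1,0}$ and $\hat\Lambda^{0,1} = r(0)\Lambda^{0,1}$ are isotropic rank-$2$ subbundles with $\hat\Lambda^{1,0}+\hat\Lambda^{0,1}\subset V$ follows from $r(0)|_V,r(\infty)|_V\in\Gamma(O(V))$, cf.\ \eqref{eq:r0rinfpreserveVperp}. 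The first delicate point is that $\hat\Lambda := \hat\Lambda^{1,0}\cap\hat\Lambda^{0,1}$ is a rank-$1$ bundle: $\hat\Lambda^{1,0}$ and $\hat\Lambda^{0,1}$ are maximal isotropic subbundles of the rank-$4$ bundle $V$, and two maximal isotropic planes of a four-dimensional quadratic space meet in a line exactly when they lie in opposite rulings, an isometry of $V$ preserving or interchanging the two rulings according as its determinant on $V$ is $+1$ or $-1$. Since $\Lambda^{1,0}\cap\Lambda^{0,1} = \Lambda$ has rank $1$, $\Lambda^{1,0}$ and $\Lambda^{0,1}$ lie in opposite rulings of $V$, and hypothesis \eqref{eq:condondet} makes $r(\infty)$ and $r(0)$ effect the same permutation of the two rulings, so $r(\infty)\Lambda^{1,0}$ and $r(0)\Lambda^{0,1}$ still lie in opposite rulings and meet pointwise in a line; as both subbundles are smooth and the intersection has constant rank, $\hat\Lambda$ is a smooth line subbundle.

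The heart of the proof is that $V$ is a $\hat d$-central sphere congruence, that is, $(\mathcal N^{\hat d}_V)^{0,1}\hat\Lambda^{1,0} = 0$ and $(\mathcal N^{\hat d}_V)^{1,0}\hat\Lambda^{0,1} = 0$. I would prove the first and leave the second, which is its mirror image (expanding at $\lambda = 0$ instead of $\lambda = \infty$, with $r(0)$, $\Lambda^{0,1}$, $q^{1,0}$ in place of $r(\infty)$, $\Lambda^{1,0}$, $q^{0,1}$). By Proposition \ref{cruxh}, $(\hat d^{\lambda,\hat q}_V)^{0,1} = r(\lambda)\,(d^{\lambda,q}_V)^{0,1}\,r(\lambda)^{-1}$ is a polynomial of degree $\le 2$ in $\lambda$ whose coefficient of $\lambda$ is $(\mathcal N^{\hat d}_V)^{0,1}$. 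Evaluating this on a section $r(\infty)t$ of $\hat\Lambda^{1,0}$, with $t\in\Gamma\Lambda^{1,0}$, and expanding $r(\lambda)^{-1}r(\infty) = I + \lambda^{-1}h_1 + \cdots$ near $\lambda = \infty$, one uses $\mathcal N^{0,1}_V t = 0$ (as $V$ is the $d$-central sphere congruence of $(\Lambda^{1,0},\Lambda^{0,1})$) and $q^{0,1}t = 0$ (isotropy of $\Lambda^{1,0}$, since $q^{0,1}\in\Omega^{0,1}(\wedge^2\Lambda^{1,0})$) to see that the $\lambda^2$-term disappears and the coefficient of $\lambda$ reduces to $r(\infty)\,q^{0,1}(h_1 t)$. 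Finally the twisting \eqref{eq:RcommuteRho} forces the leading correction term of $r$ at $\infty$ to anticommute with $\rho$, so $h_1$ maps $V$ into $V^\perp$; as $q^{0,1}$ takes values in $\wedge^2 V$ it annihilates $V^\perp$, whence $q^{0,1}(h_1 t) = 0$ and $(\mathcal N^{\hat d}_V)^{0,1}\hat\Lambda^{1,0} = 0$. This is the step I expect to be the main obstacle: no single identity is hard, but the argument demands careful bookkeeping of the Laurent expansions of $r(\lambda)\,d^{\lambda,q}_V\,r(\lambda)^{-1}$ at both $0$ and $\infty$, together with a clean use of \eqref{eq:RcommuteRho} and \eqref{eq:r0rinfpreserveVperp}, to cancel every spurious term.

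It remains to check the surface conditions $\hat d^{1,0}\Gamma\hat\Lambda\subset\Omega^{1,0}\hat\Lambda^{1,0}$ and $\hat d^{0,1}\Gamma\hat\Lambda\subset\Omega^{0,1}\hat\Lambda^{0,1}$. Writing $\hat d = \mathcal D^{\hat d}_V + \mathcal N^{\hat d}_V$ and taking $\hat s\in\Gamma\hat\Lambda$: since $\hat\Lambda\subset\hat\Lambda^{1,0}\cap\hat\Lambda^{0,1}$, the previous paragraph gives $\mathcal N^{\hat d}_V\hat s = 0$, while for the $\mathcal D^{\hat d}_V$-term I substitute the explicit formula \eqref{eq:mathcalDdosr}. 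For the $(0,1)$-part, writing $\hat s = r(0)s$ with $s\in\Gamma\Lambda^{0,1}$, one has $\mathcal D^{0,1}_V s\in\Omega^{0,1}\Lambda^{0,1}$ by Lemma \ref{curlyDfacts}, $q^{0,1}s\in\Omega^{0,1}\Lambda$ since $\wedge^2\Lambda^{1,0}$ carries the isotropic plane $\Lambda^{0,1}$ into $\Lambda$, and $\hat q^{0,1}\hat s = r(\infty)\,q^{0,1}(r(\infty)^{-1}\hat s) = 0$ because $r(\infty)^{-1}\hat s\in\Lambda^{1,0}$ is isotropic; hence $\hat d^{0,1}\hat s\in\Omega^{0,1}\hat\Lambda^{0,1}$, and the $(1,0)$-case is symmetric. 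With all three $\hat d$-statements in hand, transporting by $r(1)^{-1}$ as in the first paragraph completes the proof.
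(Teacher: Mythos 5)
Your proposal is correct and follows essentially the same route as the paper: reduce to the $\hat d$-statements for $(\hat\Lambda^{1,0},\hat\Lambda^{0,1})$ and transport by $r(1)^{-1}$, use the two rulings of the isotropic Grassmannian of $V$ together with \eqref{eq:condondet} for the rank-$1$ intersection, the formula \eqref{eq:mathcalDdosr} plus Lemma \ref{curlyDfacts} for the $\mathcal{D}$-conditions, and the vanishing of the first-order term of the expansion of $r(\lambda)\circ d^{\lambda,q}_{V}\circ r(\lambda)^{-1}$ for \eqref{eq:chapeusNs}. Your identification of the residual term as $r(\infty)\,q^{0,1}(h_{1}t)$ with $h_{1}$ anticommuting with $\rho$ is exactly the paper's observation \eqref{eq:rrsat0} that the logarithmic derivative of $r$ at the endpoint lies in $\Gamma(V\wedge V^{\perp})$, merely written at $\lambda=\infty$ instead of $\lambda=0$.
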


\begin{proof}
First of all, note that, by \eqref{eq:multiplierspecifics},
$\hat{q}^{i,j}\in\Omega^{i,j}(\wedge^{2}\hat{\Lambda}^{j,i})$ and,
therefore,
$$(\mathrm{Ad}_{r(1)^{-1}}\hat{q})^{i,j}\in\Omega^{i,j}(\wedge^{2}r(1)^{-1}\hat{\Lambda}^{j,i}),$$
for $i\neq j\in\{0,1\}$. In the light of Theorem \ref{CHtransf}, we
are left to verify that
$(r(1)^{-1}\hat{\Lambda}^{1,0},r(1)^{-1}\hat{\Lambda}^{0,1})$ is a
surface admitting $r(1)^{-1}V$ as a central sphere congruence.

The fact that $\Lambda^{1,0}$ and $\Lambda^{0,1}$ are rank $2$
isotropic subbundles of $V$ ensures that so are
$\hat{\Lambda^{1,0}}$ and $\hat{\Lambda^{0,1}}$, as $r(0)$ and
$r(\infty)$ are orthogonal transformations and preserve $V$. To see
that $\hat{\Lambda}$ is $\mathrm{rank}\,1$, we use some well-known
facts about the Grassmannian $\mathcal{G}_{W}$ of isotropic
$2$-planes in a complex $4$-dimensional space $W$: it has two
components, each an orbit of the special orthogonal group $SO(W)$,
intertwined by the action of elements of $O(W)\backslash SO(W)$, and
for which any element intersects any element of the other component
in a line while distinct elements of the same component have trivial
intersection. Since $\mathrm{rank}\,\Lambda=1$, $\Lambda^{1,0}_{p}$
and $\Lambda^{0,1}_{p}$ lie in different components of
$\mathcal{G}_{V_{p}}$ and the hypothesis \eqref{eq:condondet}
ensures that the same is true of $\hat{\Lambda}^{1,0}_{p}$ and
$\hat{\Lambda}^{0,1}_{p}$, for all $p$.

We are left to verify that
\begin{equation}\label{eq:chapeusds}
\hat{d}^{1,0}\Gamma(\hat{\Lambda})\subset\Omega^{1}(\hat{\Lambda}^{1,0}),\,\,\,\,\,\,\hat{d}^{0,1}\Gamma(\hat{\Lambda})\subset\Omega^{1}(\hat{\Lambda}^{0,1})
\end{equation}
and that (recall equation \eqref{eq:curlyNphi})
\begin{equation}\label{eq:chapeusNs}
(\mathcal{N}_{V}^{\hat{d}})^{1,0}\hat{\Lambda}^{0,1}=0=(\mathcal{N}_{V}^{\hat{d}})^{0,1}\hat{\Lambda}^{1,0}.
\end{equation}
Equation \eqref{eq:chapeusNs} forces
$\mathcal{N}_{V}^{\hat{d}}\hat{\Lambda}=0$, in which situation,
\eqref{eq:chapeusds} reads
$$(\mathcal{D}_{V}^{\hat{d}})^{1,0}\Gamma(\hat{\Lambda})\subset\Omega^{1}(\hat{\Lambda}^{1,0}),\,\,\,\,\,\,(\mathcal{D}_{V}^{\hat{d}})^{0,1}\Gamma(\hat{\Lambda})\subset\Omega^{1}(\hat{\Lambda}^{0,1}),$$
which, in its turn, follows from
\begin{equation}\label{eq:vcsl'1'08763trfgvhbjvuf364rgbcn}
(\mathcal{D}_{V}^{\hat{d}})^{1,0}\Gamma(\hat{\Lambda}^{1,0})\subset\Omega^{1}(\hat{\Lambda}^{1,0}),\,\,\,\,\,\,(\mathcal{D}_{V}^{\hat{d}})^{0,1}\Gamma(\hat{\Lambda}^{0,1})\subset\Omega^{1}(\hat{\Lambda}^{0,1}).
\end{equation}
It is \eqref{eq:chapeusNs} and
\eqref{eq:vcsl'1'08763trfgvhbjvuf364rgbcn} that we shall establish.

First of all, note that, according to \eqref{eq:mathcalDdosr},
$$(\mathcal{D}_{V}^{\hat{d}})^{1,0}=r(\infty)\circ(\mathcal{D}_{V}^{1,0}-q^{1,0})\circ
r(\infty)^{-1}+\hat{q}^{1,0}$$ and
$$(\mathcal{D}^{\hat{d}}_{V})^{0,1}=r(0)\circ(\mathcal{D}_{V}^{0,1}-q^{0,1})\circ r(0)^{-1}+\hat{q}^{0,1}.$$
Now $q^{1,0}$ takes values in $\Lambda\wedge\Lambda^{0,1}$, so
$q^{1,0}\Lambda^{1,0}\subset\Lambda\subset\Lambda^{1,0}$, by the
isotropy of $\Lambda^{1,0}$. On the other hand, since
$\mathrm{rank}\,\hat{\Lambda}=1$, we have
$\wedge^{2}\hat{\Lambda}^{0,1}=\hat{\Lambda}\wedge\hat{\Lambda}^{0,1}$
and, therefore,
$\hat{q}^{1,0}\hat{\Lambda}^{1,0}\subset\hat{\Lambda}\subset\hat{\Lambda}^{1,0}$.
Together with Lemma \ref{curlyDfacts}, this establishes the
$(1,0)$-part of \eqref{eq:vcsl'1'08763trfgvhbjvuf364rgbcn}. A
similar argument establishes the $(0,1)$-part of it.

Finally, we establish \eqref{eq:chapeusNs}. According to Proposition
\ref{cruxh},
\begin{eqnarray*}
(\mathcal{N}^{\hat{d}}_{V})^{1,0}&=&\mathrm{lim}_{\lambda\rightarrow
0}\,\lambda((\hat{d}_{V}^{\lambda,\hat{q}})^{1,0}-(\mathcal{D}^{\hat{d}}_{V})^{1,0}-(\lambda^{-2}-1)\hat{q}^{1,0})\\&=&\mathrm{lim}_{\lambda\rightarrow
0}\,\lambda((\hat{d}_{V}^{\lambda,\hat{q}})^{1,0}-\lambda^{-2}\hat{q}^{1,0})\\&=&\mathrm{lim}_{\lambda\rightarrow
0}\,(r(\lambda)\circ(\lambda\, (d^{\lambda,q}_{V})^{1,0})\circ
r(\lambda)^{-1}-\lambda^{-1}\mathrm{Ad}_{r(0)}q^{1,0})\\&=&\mathrm{Ad}_{r(0)}\mathcal{N}^{1,0}_{V}+\mathrm{lim}_{\lambda\rightarrow
0}\,\frac{1}{\lambda}(\mathrm{Ad}_{r(\lambda)}-\mathrm{Ad}_{r(0)})q^{1,0}.
\end{eqnarray*}
so that
\begin{equation}\label{eq:calNhatd0,1}
(\mathcal{N}^{\hat{d}}_{V})^{1,0}=\mathrm{Ad}_{r(0)}\mathcal{N}^{1,0}_{V}+\frac{d}{d\lambda}_{\vert_{\lambda=0}}\mathrm{Ad}_{r(\lambda)}q^{1,0};
\end{equation}
and, similarly,
\begin{eqnarray*}
(\mathcal{N}^{\hat{d}}_{V})^{0,1}&=&\mathrm{lim}_{\lambda\rightarrow
\infty}\,\lambda^{-1}((\hat{d}_{V}^{\lambda,\hat{q}})^{0,1}-(\mathcal{D}^{\hat{d}}_{V})^{0,1}-(\lambda^{2}-1)\hat{q}^{0,1})\\&=&
\mathrm{Ad}_{r(\infty)}\mathcal{N}^{0,1}_{V}+\mathrm{lim}_{\lambda\rightarrow
\infty}\,(r(\lambda)\circ \lambda q^{0,1}\circ
r(\lambda)^{-1}-\lambda\mathrm{Ad}_{r(\infty)}q^{0,1})
\end{eqnarray*}
and, therefore,
\begin{equation}\label{eq:calNhatd1,0}
(\mathcal{N}^{\hat{d}}_{V})^{0,1}=\mathrm{Ad}_{r(\infty)}\mathcal{N}^{0,1}_{V}+\frac{d}{d\lambda}_{\vert_{\lambda=0}}\mathrm{Ad}_{r(\lambda^{-1})}q^{0,1}.
\end{equation}
Furthermore, by \eqref{eq:calNhatd0,1},
$$(\mathcal{N}^{\hat{d}}_{V})^{1,0}= \mathrm{Ad}_{r(0)}(\mathcal{N}^{1,0}+[r(0)^{-1}\frac{d}{d\lambda}_{\vert_{\lambda=0}}r(\lambda),q^{1,0}]).$$
The centrality of $V$ with respect to
$(\Lambda^{1,0},\Lambda^{0,1})$ establishes, in particular,
$\mathcal{N}_{V}^{1,0}\Lambda^{0,1}=0$, whilst the isotropy of
$\Lambda^{0,1}$ ensures, in particular, that
$q^{1,0}\Lambda^{0,1}=0$. Hence
$$\mathrm{Ad}_{r(0)}(\mathcal{N}^{1,0}+r(0)^{-1}\frac{d}{d\lambda}_{\vert_{\lambda=0}}r(\lambda)\,q^{1,0})\hat{\Lambda}^{0,1}=0.$$
On the other hand, differentiation of  $r(\lambda)^{-1}=\rho\,
r(-\lambda)^{-1} \rho$, derived from equation
\eqref{eq:RcommuteRho}, gives
$$-r(\lambda)^{-1}\frac{d}{dk}_{\vert_{k=\lambda}}r(k)\,r(\lambda)^{-1}=\rho\, r(-\lambda)^{-1}\frac{d}{dk}_{\vert_{k=-\lambda}}r(k)\,r(-\lambda)^{-1}\rho,$$
or, equivalently,
$$\rho\,r(\lambda)^{-1}\frac{d}{dk}_{\vert_{k=\lambda}}r(k)\rho=-r(-\lambda)^{-1}\frac{d}{dk}_{\vert_{k=-\lambda}}r(k)\,r(-\lambda)^{-1}\rho\,r(\lambda)\rho,$$
and, therefore, yet again by equation \eqref{eq:RcommuteRho},
$$\rho\,r(\lambda)^{-1}\frac{d}{dk}_{\vert_{k=\lambda}}r(k)\rho=-r(-\lambda)^{-1}\frac{d}{dk}_{\vert_{k=-\lambda}}r(k).$$
Evaluation at $\lambda=0$ shows then that $$\rho\,
r(0)^{-1}\frac{d}{d\lambda}_{\vert_{\lambda=0}}r(\lambda)\rho=-r(0)^{-1}\frac{d}{d\lambda}_{\vert_{\lambda=0}}r(\lambda).$$
Equivalently,
\begin{equation}\label{eq:rrsat0}
r(0)^{-1}\frac{d}{d\lambda}_{\vert_{\lambda=0}}r(\lambda)\in\Gamma(V\wedge
V^{\perp}).
\end{equation}
Since $qV^{\perp}=0$, we conclude that
$$Ad_{r(0)}(q^{1,0}r(0)^{-1}\frac{d}{d\lambda}_{\vert_{\lambda=0}}r(\lambda))\hat{\Lambda}^{0,1}=0$$
and, ultimately, that
$(\mathcal{N}_{V}^{\hat{d}})^{1,0}\hat{\Lambda}^{0,1}=0$. A similar
argument near $\lambda=\infty$ establishes
$(\mathcal{N}_{V}^{\hat{d}})^{0,1}\hat{\Lambda}^{1,0}=0$, completing
the proof.
\end{proof}

\subsection{B\"{a}cklund transformation}
We now construct $r=r(\lambda)$ satisfying the hypothesis of the
previous section. As the philosophy underlying the work of C.-L.
Terng and K. Uhlenbeck \cite{uhlenbeck} suggests, we consider linear
fractional transformations. As we shall see, a two-step process will
produce a desired $r$.\newline

Given $\alpha\in\C\backslash \{-1,0,1\}$ and $L$ a null line
subbundle of $\underline{\C}^{n+2}$ such that, locally, $\rho L\cap
L^{\perp}=\{0\}$, set
$$p_{\alpha,L}(\lambda):=
I\left\{
\begin{array}{ll} \frac{\alpha-\lambda}{\alpha+\lambda} & \mbox{$\mathrm{on}\,L$}\\ 1 &
\mbox{$\mathrm{on}\,(L\oplus \rho
L)^{\perp}$}\\\frac{\alpha+\lambda}{\alpha-\lambda}&
\mbox{$\mathrm{on}\,\rho L$}\end{array}\right.$$ and
$$q_{\alpha,L}(\lambda):= I\left\{
\begin{array}{ll}\frac{\lambda-\alpha}{\lambda+\alpha} & \mbox{$\mathrm{on}\,L$}\\ 1 &
\mbox{$\mathrm{on}\,(L\oplus \rho
L)^{\perp}$}\\\frac{\lambda+\alpha}{\lambda-\alpha}&
\mbox{$\mathrm{on}\,\rho L$}\end{array}\right.,$$for
$\lambda\in\C\backslash\{\pm\alpha\}$, defining in this way maps
$$p_{\alpha,L}, q_{\alpha,L}:\C\backslash\{\pm\alpha\}\rightarrow
\Gamma(O(\underline{\C}^{n+2}))$$ that, clearly, extend
holomorphically to $\mathbb{P}^{1}\backslash\{\pm \alpha\}$, by
setting
$$p_{\alpha,L}(\infty):=
I\left\{
\begin{array}{ll} -1 & \mbox{$\mathrm{on}\,L$}\\ 1 &
\mbox{$\mathrm{on}\,(L\oplus \rho L)^{\perp}$}\\-1&
\mbox{$\mathrm{on}\,\rho L$}\end{array}\right.$$ and
$$q_{\alpha,L}(\infty):=I.$$ Obviously, $p_{\alpha,L}(\infty)$ and
$q_{\alpha,L}(\infty)$ do not depend on $\alpha$. For further
reference, note that, for all
$\lambda\in\C\backslash\{\pm\alpha,0\}$, we have
\begin{equation}\label{eq:conjap}
p_{\alpha,L}(\lambda)=q_{\alpha^{-1},L}(\lambda^{-1}),
\end{equation}
whilst $$p_{\alpha,L}(0)=q_{\alpha,L}(\infty),\,\,\,\,p_{\alpha,L}(\infty)=q_{\alpha,L}(0).$$

The isometry $\rho=\rho ^{-1}$ intertwines $L$ and $\rho L$ and,
therefore, preserves $(L\oplus \rho L)^{\perp}$, which makes clear
that $\rho\circ p_{\alpha,L}(\lambda)$ and
$p_{\alpha,L}(\lambda)^{-1}\circ\rho$ coincide in $L$, $\rho L$ and
$(L\oplus \rho L)^{\perp}$ and, therefore,
\begin{equation}\label{twistespandq}
\rho\,p_{\alpha,L}(\lambda)\rho=p_{\alpha,L}(\lambda)^{-1}=p_{\alpha,L}(-\lambda),
\end{equation}
and, similarly,
\begin{equation}\label{twistespandq2}
\rho\,q_{\alpha,L}(\lambda)\rho=q_{\alpha,L}(\lambda)^{-1}=q_{\alpha,L}(-\lambda),
\end{equation}
for all $\lambda$ - both $p_{\alpha,L}$ and $q_{\alpha,L}$ are
twisted in the sense of Section \ref{dressingaction}.

Since $\rho L$ is not orthogonal to $L$, $\rho L\neq L$, so $L$ is
not a subbundle of $V$ and, therefore,
$\mathrm{rank}\,V\cap(L\oplus\rho L)=1$. We conclude that
$$p_{\alpha,L}(\infty)_{\vert_{V}}=q_{\alpha,L}(0)_{\vert_{V}}
=I\left\{
\begin{array}{ll} -1 & \mbox{$\mathrm{on}\,V\cap(L\oplus\rho L)$}\\ 1 &
\mbox{$\mathrm{on}\,V\cap(L\oplus\rho
L)^{\perp}$}\end{array}\right.$$ has determinant $-1\neq
1=\mathrm{det}\,p_{\alpha,L}(0)_{\vert_{V}}=\mathrm{det}\,q_{\alpha,L}(\infty)_{\vert_{V}}$,
so we cannot take $p_{\alpha,L}=r$ or $q_{\alpha,L}=r$ in the
analysis of Section \ref{dressingaction}. However, we will be able
to take $r=q_{\beta,\hat{L}}p_{\alpha,L}$, for suitable $\beta$ and
$\hat{L}$, as we shall see.

Now choose $\alpha\in\C\backslash \{-1,0,1\}$ and $L^{\alpha}$ a
$d^{\alpha,q}_{V}$-parallel null line subbundle of
$\underline{\C}^{n+2}$ such that, locally,
\begin{equation}\label{eq:LrhoL}
\rho L^{\alpha}\cap (L^{\alpha})^{\perp}=\{0\}.
\end{equation}
Such a bundle $L^{\alpha}$ can be obtained by
$d^{\alpha,q}_{V}$-parallel transport of
$l^{\alpha}_{p}\in\C^{n+2}$, with $l^{\alpha}_{p}$ null
 and non-orthogonal to $\rho_{p}l^{\alpha}_{p}$, for some $p\in\Sigma$.

\begin{lemma}\cite{BQ}\label{lemmaext}
There exists a holomorphic extension of $$\lambda\mapsto
d^{\lambda,q}_{p_{_{\alpha,L^{\alpha}}}}:=p_{\alpha,L^{\alpha}}(\lambda)\circ
d^{\lambda,q}_{V}\circ p_{\alpha,L^{\alpha}}(\lambda)^{-1}$$ to
$\lambda\in\C\backslash\{0\}$ through metric connections on
$\underline{\C}^{n+2}$.
\end{lemma}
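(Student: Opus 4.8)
\emph{Proposal.} Abbreviate $p(\lambda):=p_{\alpha,L^{\alpha}}(\lambda)$ and set $A(\lambda):=d^{\lambda,q}_{V}-d\in\Omega^{1}(o(\underline{\C}^{n+2}))$, so that $d^{\lambda,q}_{V}$ depends on $\lambda$ through a finite Laurent expansion with poles only at $\lambda=0$. The gauged object is the connection $d+\hat{A}(\lambda)$ with connection form $\hat{A}(\lambda)=p(\lambda)\,A(\lambda)\,p(\lambda)^{-1}+p(\lambda)\,d(p(\lambda)^{-1})$. Since $p$ and $p^{-1}$ are holomorphic and invertible on $\C\backslash\{\pm\alpha\}$ with values in $O(\underline{\C}^{n+2})$, the family $\lambda\mapsto d^{\lambda,q}_{p_{\alpha,L^{\alpha}}}$ is already holomorphic through metric connections on $\C\backslash\{0,\alpha,-\alpha\}$, so everything reduces to removability of the apparent singularities at $\lambda=\pm\alpha$. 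Moreover, \eqref{eq:rholam} together with \eqref{twistespandq} gives $p(-\lambda)=\rho\,p(\lambda)\,\rho$, whence $d^{-\lambda,q}_{p_{\alpha,L^{\alpha}}}=\rho\circ d^{\lambda,q}_{p_{\alpha,L^{\alpha}}}\circ\rho^{-1}$; as $\rho$ is independent of $\lambda$, removability at $\alpha$ yields it at $-\alpha$, and it is enough to treat $\lambda=\alpha$.

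I would analyse $\lambda=\alpha$ in the direct sum decomposition $\underline{\C}^{n+2}=L^{\alpha}\oplus(L^{\alpha}\oplus\rho L^{\alpha})^{\perp}\oplus\rho L^{\alpha}$, legitimate by \eqref{eq:LrhoL}, with associated (non-orthogonal) projections $\pi_{L},\pi_{\perp},\pi_{\rho L}$; the key elementary observation is that $(L^{\alpha})^{\perp}=L^{\alpha}\oplus(L^{\alpha}\oplus\rho L^{\alpha})^{\perp}=\ker\pi_{\rho L}$, because $L^{\alpha}$ is null. In these terms $p(\lambda)=\tfrac{\alpha-\lambda}{\alpha+\lambda}\pi_{L}+\pi_{\perp}+\tfrac{\alpha+\lambda}{\alpha-\lambda}\pi_{\rho L}$, so at $\lambda=\alpha$ the factor $p$ has a simple pole with principal part $-\tfrac{2\alpha}{\lambda-\alpha}\pi_{\rho L}$, $p^{-1}$ one with principal part $-\tfrac{2\alpha}{\lambda-\alpha}\pi_{L}$, and $A(\lambda)$ is holomorphic at $\alpha$. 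Substituting the Laurent expansions of $p$, $p^{-1}$, $A$ into $\hat{A}(\lambda)$ and collecting the negative powers of $\lambda-\alpha$, a short bookkeeping shows that $\hat{A}$ has at worst a double pole at $\lambda=\alpha$ whose principal part is, up to factors polynomial in $\alpha$, a combination of the four blocks
$$\pi_{\rho L}\circ d^{\alpha,q}_{V}\circ\pi_{L},\qquad\pi_{\perp}\circ d^{\alpha,q}_{V}\circ\pi_{L},\qquad\pi_{\rho L}\circ d^{\alpha,q}_{V}\circ\pi_{\perp},\qquad\pi_{\rho L}\circ\dot{A}(\alpha)\circ\pi_{L},$$
where $\dot{A}(\alpha):=\frac{d}{d\lambda}_{\vert{\lambda=\alpha}}A(\lambda)$.

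Each of these four blocks then vanishes, which finishes the proof. As $L^{\alpha}$ is $d^{\alpha,q}_{V}$-parallel, $d^{\alpha,q}_{V}$ maps $\mathrm{im}\,\pi_{L}=L^{\alpha}$ into $L^{\alpha}\subset\ker\pi_{\rho L}\cap\ker\pi_{\perp}$, killing the first two blocks. Since $d^{\alpha,q}_{V}$ is a \emph{metric} connection, its orthogonal complement $(L^{\alpha})^{\perp}=\ker\pi_{\rho L}$ is $d^{\alpha,q}_{V}$-parallel as well, and $\mathrm{im}\,\pi_{\perp}=(L^{\alpha}\oplus\rho L^{\alpha})^{\perp}\subset(L^{\alpha})^{\perp}$, which kills the third block. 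Finally, $\dot{A}(\alpha)$ is skew-symmetric, being a $\lambda$-derivative of the family $A(\lambda)\in\Omega^{1}(o(\underline{\C}^{n+2}))$, so it sends the null line $L^{\alpha}$ into $(L^{\alpha})^{\perp}=\ker\pi_{\rho L}$, killing the last block. Hence $\hat{A}$ extends holomorphically across $\lambda=\alpha$; skew-symmetry of $\hat{A}(\lambda)$ for $\lambda\neq\pm\alpha$ passes to the limit, so the extension is through metric connections, and with the reduction of the first paragraph the lemma follows.

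The only real work is the bookkeeping in the second paragraph: one must verify that, once the cancellations produced by the $d^{\alpha,q}_{V}$-parallelism of $L^{\alpha}$ are carried out, the surviving principal part of $\hat{A}$ at $\lambda=\alpha$ involves \emph{exactly} those four blocks and nothing else. The conceptual point not to overlook is that $d^{\alpha,q}_{V}$-parallelism of $L^{\alpha}$ by itself does not kill the \emph{simple}-pole coefficient: one also needs $(L^{\alpha})^{\perp}$ to be $d^{\alpha,q}_{V}$-parallel — which is where the metric condition enters — and the skew-symmetry of $\dot{A}(\alpha)$, both available only because $L^{\alpha}$ is a \emph{null} line, so that $\ker\pi_{\rho L}$ is precisely $(L^{\alpha})^{\perp}$.
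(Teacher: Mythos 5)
Your argument is correct and is essentially the paper's own proof: both reduce to $\lambda=\alpha$ via the twisting symmetry, split $d^{\lambda,q}_{V}=d^{\alpha,q}_{V}+(\lambda-\alpha)(\cdot)$, and kill the singular blocks using the $d^{\alpha,q}_{V}$-parallelism of $L^{\alpha}$, the metric property of the connection, and skew-symmetry combined with the nullity of $L^{\alpha}$ (which is what makes $\ker\pi_{\rho L}=(L^{\alpha})^{\perp}$). The only difference is presentational: the paper packages the connection part as $D+\beta$ with $\beta\in\Omega^{1}(L^{\alpha}\wedge(L^{\alpha}\oplus\rho L^{\alpha})^{\perp})$, so that $\mathrm{Ad}_{p_{\alpha,L^{\alpha}}(\lambda)}\beta=\frac{\alpha-\lambda}{\alpha+\lambda}\,\beta$ is manifestly holomorphic and only the remainder needs a pole count, whereas you compute the Laurent principal part block by block and check each coefficient vanishes.
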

\begin{proof}
We prove holomorphicity at $\lambda=\alpha$. For
$\lambda\in\C\backslash \{0,\alpha\}$, write
$$d^{\lambda,q}_{V}=d^{\alpha,q}_{V}+(\lambda-\alpha)A(\lambda),$$ with $\lambda\mapsto
A(\lambda)\in\Omega^{1}(o(\underline{\C}^{n+2}))$ holomorphic.
Decompose $d^{\alpha,q}_{V}=D+\beta$ according to the decomposition
$$\underline{\C}^{n+2}=(L^{\alpha}\oplus\rho L^{\alpha})\oplus
(L^{\alpha}\oplus\rho L^{\alpha})^{\perp}.$$ The fact that
$d^{\alpha,q}_{V}$ is a metric connection establishes
$$d^{\alpha,q}_{V}\,\Gamma (\rho L^{\alpha})\subset \Omega^{1}(\rho
L^{\alpha})^{\perp},$$ as well as
$$d^{\alpha,q}_{V}\,\Gamma ((L^{\alpha}\oplus\rho
L^{\alpha})^{\perp})\subset \Omega^{1} (L^{\alpha})^{\perp},$$ 
in view of the
$d^{\alpha,q}_{V}$-parallelness of $L^{\alpha}$. By
\eqref{eq:LrhoL}, we conclude that the $1$-form
$\beta\in\Omega^{1}((L^{\alpha}\oplus\rho L^{\alpha})\wedge
(L^{\alpha}\oplus\rho L^{\alpha})^{\perp})$ takes values in
$L^{\alpha}\wedge (L^{\alpha}\oplus\rho L^{\alpha})^{\perp}$. Hence
$$p_{\alpha,L^{\alpha}}(\lambda)\circ \beta\circ
p_{\alpha,L^{\alpha}}(\lambda)^{-1}=\frac{\alpha-\lambda}{\alpha+\lambda}\,\beta.$$
On the other hand, $$p_{\alpha,L^{\alpha}}(\lambda)\circ D\circ
p_{\alpha,L^{\alpha}}(\lambda)^{-1}=D,$$ as $L^{\alpha}$, $\rho
L^{\alpha}$ and $(L^{\alpha}\oplus\rho L^{\alpha})^{\perp}$ are all
$D$-parallel. Thus
$$d^{\lambda,q}_{p_{_{\alpha,L}}}=D+\frac{\alpha-\lambda}{\alpha+\lambda}\,\beta+(\lambda-\alpha)\,p_{\alpha,L^{\alpha}}(\lambda)\,A(\lambda)\,p_{\alpha,L^{\alpha}}(\lambda)^{-1}.$$
Lastly, note that, by the skew-symmetry of $A(\lambda)$, we have
$A(\lambda)L^{\alpha}\subset (L^{\alpha})^{\perp}$ and
$A(\lambda)\rho L^{\alpha}\subset (\rho L^{\alpha})^{\perp}$ and,
therefore, by \eqref{eq:LrhoL}, $$A(\lambda)L^{\alpha}\subset
L^{\alpha}\oplus (L^{\alpha}\oplus\rho L^{\alpha})^{\perp}$$ and
$$A(\lambda)\rho L^{\alpha}\subset \rho L^{\alpha}\oplus
(L^{\alpha}\oplus\rho L^{\alpha})^{\perp}.$$ We conclude that
$(\lambda-\alpha)\,\mathrm{Ad}_{p_{\alpha,L^{\alpha}}(\lambda)}\,A(\lambda)$
has at most a simple pole at $\lambda=-\alpha$ and, therefore, that
$d^{\lambda,q}_{p_{_{\alpha,L}}}$ is holomorphic at
$\lambda=\alpha$. Furthermore, the fact that $D$ is a metric
connection establishes that so is $d^{\lambda,q}_{p_{_{\alpha,L}}}$,
in view of the skew-symmetry of $A(\lambda)$ and of $\beta$.

Holomorphicity at $\lambda=-\alpha$ can either be proved in the same
way, having in consideration that the
$d^{\alpha,q}_{V}$-parallelness of $L^{\alpha}$ establishes the
$d^{-\alpha,q}_{V}$-parallelness of $\rho L^{\alpha}$, or by
exploiting the symmetry $\lambda\mapsto -\lambda$.
\end{proof}

\begin{rem}
$1)$ The same argument establishes the existence of a holomorphic
extension of $$\lambda\mapsto
d^{\lambda,q}_{q_{_{\alpha,L^{\alpha}}}}:=q_{\alpha,L^{\alpha}}(\lambda)\circ
d^{\lambda,q}_{V}\circ q_{\alpha,L^{\alpha}}(\lambda)^{-1}$$ to
$\lambda\in\C\backslash\{0\}$ through metric connections on
$\underline{\C}^{n+2}$.\newline

$2)$ This argument uses nothing about the precise form of
$d^{\lambda,q}_{V}$, only that it is holomorphic near
$\lambda=\pm\alpha$.
\end{rem}

Now we can iterate the procedure starting with the connections
$d^{\lambda,q}_{p_{_{\alpha,L^{\alpha}}}}$. Choose
$\beta\neq\pm\alpha$ in $\C\backslash\{-1,0,1\}$ and $L^{\beta}$ a
$d^{\beta,q}_{V}$-parallel null line subbundle of
$\underline{\C}^{n+2}$. The fact that
$$p_{\alpha,L^{\alpha}}:(\underline{\C}^{n+2},d^{\lambda,q}_{V})\rightarrow
(\underline{\C}^{n+2},d^{\lambda,q}_{p_{_{\alpha,L^{\alpha}}}})$$
preserves connections establishes the
$d^{\beta,q}_{p_{_{\alpha,L^{\alpha}}}}$-parallelness of
$$\hat{L}^{\beta}_{\alpha}:=p_{\alpha,L^{\alpha}}(\beta)L^{\beta}.$$
Choose $L^{\beta}$ satisfying, furthermore, $\rho
\hat{L}^{\beta}_{\alpha}\cap(\hat{L}^{\beta}_{\alpha})^{\perp}=\{0\}$. Such
a bundle $L^{\beta}$ can be obtained by $d^{\beta,q}_{V}$-parallel
transport of $l^{\alpha}_{p}\in L^{\alpha}_{p}$, with
$l^{\alpha}_{p}$ non-zero, non-orthogonal to
$\rho_{p}l^{\alpha}_{p}$, for some
 $p\in\Sigma$. Indeed, by \eqref{twistespandq},
$$(\rho_{p} p_{\alpha,L^{\alpha}_{p}}(\beta)l^{\alpha}_{p},
p_{\alpha,L^{\alpha}_{p}}(\beta)l^{\alpha}_{p})=
(p_{\alpha,L^{\alpha}_{p}}(\beta)^{-1}\rho_{p}l^{\alpha}_{p},
p_{\alpha,L^{\alpha}_{p}}(\beta)l^{\alpha}_{p})=\frac{(\alpha-\beta)^{2}}{(\alpha+\beta)^{2}}\,(\rho_{p}l^{\alpha}_{p},
l^{\alpha}_{p}).$$
It follows that
$$\lambda\mapsto
q_{\beta,\hat{L}^{\beta}_{\alpha}}(\lambda)\,p_{\alpha,L^{\alpha}}(\lambda)\circ
d^{\lambda,q}_{V}\circ p_{\alpha,L^{\alpha}}(\lambda)^{-1}\,
q_{\beta,\hat{L}^{\beta}_{\alpha}}(\lambda)^{-1}$$ admits a
holomorphic extension to $\lambda\in\C\backslash\{0\}$ through
metric connections on $\underline{\C}^{n+2}$ and, furthermore, that
$$r^{*}:=q_{\beta,\hat{L}^{\beta}_{\alpha}}p_{\alpha,L^{\alpha}}$$
satisfies all the hypothesis of Section \ref{dressingaction} on $r$.
Set
$$q^{*}:=\mathrm{Ad}_{r^{*}(1)^{-1}}(\mathrm{Ad}_{r^{*}(0)}q^{1,0}+
\mathrm{Ad}_{r^{*}(\infty)}q^{0,1}).$$

\begin{defn}\label{BTdefn}
The $q^{*}$-perturbed harmonic bundle $$V^{*}:=r^{*}(1)^{-1}\,V$$ is
said to be the \emph{B\"{a}cklund transform of $V$ of parameters
$\alpha,\beta,L^{\alpha},L^{\beta}$}. The $q^{*}$-constrained
Willmore surface
$$(\Lambda^{1,0}, \,\Lambda^{0,1})^{*}:=(r^{*}(1)^{-1}\,r^{*}(\infty)\Lambda^{1,0},r^{*}(1)^{-1}\,r^{*}(0)\Lambda^{0,1})$$
is said to be the \emph{B\"{a}cklund transform of
$(\Lambda^{1,0},\Lambda^{0,1})$ of parameters
$\alpha,\beta,L^{\alpha},L^{\beta}$}.
\end{defn}

Note that transformations corresponding to the zero multiplier preserve the class of Willmore surfaces. 

For further reference, set
$$((\Lambda^{*})^{1,0},(\Lambda^{*})^{0,1}):=(r^{*}(1)^{-1}\,r^{*}(\infty)\Lambda^{1,0},r^{*}(1)^{-1}\,r^{*}(0)\Lambda^{0,1}).$$

\subsubsection{Bianchi permutability}
Next we establish a \textit{Bianchi permutability} of type $p$ and
type $q$ transformations, showing that starting the procedure above
with the connections $d^{\lambda,q}_{q_{_{\beta,L^{\beta}}}}$ (when
defined), instead of $d^{\lambda,q}_{p_{_{\alpha,L^{\alpha}}}}$,
produces the same transforms. The underlying argument will play a
crucial role when investigating the preservation of reality
conditions by B\"{a}cklund transformation, in the next
section.\newline

Suppose $\rho L^{\beta}\cap (L^{\beta})^{\perp}=\{0\}$ and set
$\tilde{L}^{\alpha}_{\beta}:=q_{\beta,L^{\beta}}(\alpha)L^{\alpha}$.
Suppose, furthermore, that $\rho \tilde{L}^{\alpha}_{\beta}\cap
(\tilde{L}^{\alpha}_{\beta})^{\perp}=\{0\}$ (this is
certainly the case for $L^{\beta}$ obtained by
$d^{\beta,q}_{V}$-parallel transport of $l^{\alpha}_{p}\in
L^{\alpha}_{p}$, with $l^{\alpha}_{p}$ non-zero, non-orthogonal to
$\rho_{p}l^{\alpha}_{p}$, for some
 $p\in\Sigma$). Analogously to $r^{*}$,
we verify that
$$\hat{r}^{*}:=p_{\alpha,\tilde{L}^{\alpha}_{\beta}}q_{\beta,L^{\beta}}$$
satisfies all the hypothesis of Section \ref{dressingaction} on $r$.
The next result, relating $\hat{r}^{*}$ to $r^{*}$, will be crucial
in all that follows.

\begin{lemma}\cite{BQ}\label{rstarvshatrstar}
\begin{equation}\label{eq:rel}
\hat{r}^{*}=K\,r^{*},
\end{equation}
for $K:=q_{\beta,L^{\beta}}(0)\,
q_{\beta,\hat{L}^{\beta}_{\alpha}}(0)$.
\end{lemma}

The proof of the lemma we present next will be based on the
following result:

\begin{lemma}\cite{IS}\label{TrioDeHolomorfia}
Let
$\gamma(\lambda)=\lambda\,\pi_{L_{1}}+\pi_{L_{0}}+\lambda^{-1}\,\pi_{L_{-1}}$
and
$\hat{\gamma}(\lambda)=\lambda\,\pi_{\hat{L}_{1}}+\pi_{\hat{L}_{0}}+\lambda^{-1}\,\pi_{\hat{L}_{-1}}$
 be homomorphisms of $\C^{n+2}$ corresponding to decompositions $$\C^{n+2}=L_{1}\oplus L_{0}\oplus L_{-1}=\hat{L}_{1}\oplus \hat{L}_{0}\oplus
 \hat{L}_{-1}$$ with $L_{\pm 1}$ and $\hat{L}_{\pm 1}$ null lines and
 $L_{0}=(L_{1}\oplus L_{-1})^{\perp}$, $\hat{L}_{0}=(\hat{L}_{1}\oplus
 \hat{L}_{-1})^{\perp}$. Suppose $\mathrm{Ad}\,\gamma$ and $\mathrm{Ad}\,\hat{\gamma}$ have simple poles. Suppose as well that $\xi$ is a map into
 $O(\C^{n+2})$ holomorphic near $0$ such that $L_{1}=\xi
 (0)\hat{L}_{1}$. Then $\gamma\xi\hat{\gamma}^{-1}$ is holomorphic and invertible at $0$.
\end{lemma}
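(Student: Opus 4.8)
The plan is to prove holomorphy at $\lambda=0$ by a direct Laurent computation, and then deduce invertibility by applying the holomorphy statement to the inverse. Near $\lambda=0$ I would write $\gamma(\lambda)=\lambda^{-1}\pi_{L_{-1}}+\pi_{L_{0}}+\lambda\,\pi_{L_{1}}$ and, since $\hat\gamma^{-1}$ reciprocates the grading, $\hat\gamma(\lambda)^{-1}=\lambda^{-1}\pi_{\hat L_{1}}+\pi_{\hat L_{0}}+\lambda\,\pi_{\hat L_{-1}}$, and Taylor-expand $\xi(\lambda)=\xi_{0}+\lambda\,\xi_{1}+O(\lambda^{2})$ with $\xi_{0}=\xi(0)$. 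Each of $\gamma$ and $\hat\gamma^{-1}$ carries a single simple pole at $0$ (visible from the displayed forms, and consistent with the simple-pole hypotheses on $\mathrm{Ad}\,\gamma$ and $\mathrm{Ad}\,\hat\gamma$, which reflect the nullity of the lines $L_{\pm 1},\hat L_{\pm 1}$), so the product $\gamma\xi\hat\gamma^{-1}$ has a priori only a double pole, and it suffices to kill the coefficients of $\lambda^{-2}$ and $\lambda^{-1}$. The $\lambda^{-2}$ coefficient is $\pi_{L_{-1}}\xi_{0}\pi_{\hat L_{1}}$, and here is precisely where the hypothesis $L_{1}=\xi(0)\hat L_{1}$ enters: it gives $\xi_{0}\hat L_{1}=L_{1}\subset L_{1}\oplus L_{0}=\ker\pi_{L_{-1}}$, so this coefficient vanishes.

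Next I would collect the three contributions to the coefficient of $\lambda^{-1}$, namely $\pi_{L_{0}}\xi_{0}\pi_{\hat L_{1}}+\pi_{L_{-1}}\xi_{0}\pi_{\hat L_{0}}+\pi_{L_{-1}}\xi_{1}\pi_{\hat L_{1}}$, and show each term is zero. The first vanishes again from $\xi_{0}\hat L_{1}=L_{1}$, on which $\pi_{L_{0}}$ is zero. For the second I would combine the orthogonality of $\xi_{0}$ with the nullity of $L_{1}$: orthogonality yields $\xi_{0}\hat L_{0}\perp\xi_{0}\hat L_{1}=L_{1}$ since $\hat L_{0}\perp\hat L_{1}$, while the nullity of $L_{1}$ together with $L_{0}\perp L_{1}$ identifies $L_{1}^{\perp}=L_{1}\oplus L_{0}=\ker\pi_{L_{-1}}$; hence $\xi_{0}\hat L_{0}\subset\ker\pi_{L_{-1}}$.

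The crux is the third term $\pi_{L_{-1}}\xi_{1}\pi_{\hat L_{1}}$, the only one involving the derivative $\xi_{1}$, which is not directly controlled by the hypotheses. Here I would differentiate the identity $\xi(\lambda)\in O(\C^{n+2})$, i.e. $g(\xi u,\xi v)\equiv g(u,v)$, at $\lambda=0$ to get $g(\xi_{1}u,\xi_{0}v)+g(\xi_{0}u,\xi_{1}v)=0$ for all $u,v$; evaluating at $u=v=\hat u$ with $\hat u$ spanning the null line $\hat L_{1}$ forces $g(\xi_{1}\hat u,\xi_{0}\hat u)=0$, that is $\xi_{1}\hat L_{1}\perp L_{1}$. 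Using once more that $L_{1}$ is null, so that $\ker\pi_{L_{-1}}=L_{1}^{\perp}$, this places $\xi_{1}\hat L_{1}\subset\ker\pi_{L_{-1}}$ and the term vanishes. With the $\lambda^{-2}$ and $\lambda^{-1}$ coefficients gone, $\gamma\xi\hat\gamma^{-1}$ is holomorphic at $0$. I expect this third term to be the main obstacle, the key point being that the nullity of $L_{1}$ does double duty, identifying $\ker\pi_{L_{-1}}$ with $L_{1}^{\perp}$ and making the single diagonal relation extracted from the differentiated orthogonality suffice.

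Finally, for invertibility I would apply the holomorphy statement just established to $(\gamma\xi\hat\gamma^{-1})^{-1}=\hat\gamma\,\xi^{-1}\gamma^{-1}$. This has the same shape with the roles of $(\gamma,L_{\bullet})$ and $(\hat\gamma,\hat L_{\bullet})$ exchanged and $\xi$ replaced by $\xi^{-1}$, which is again valued in $O(\C^{n+2})$ and holomorphic near $0$ with $\xi^{-1}(0)=\xi_{0}^{-1}$; the hypothesis required for the swapped statement is $\hat L_{1}=\xi^{-1}(0)L_{1}$, which is exactly $L_{1}=\xi(0)\hat L_{1}$ rearranged. Hence $\hat\gamma\,\xi^{-1}\gamma^{-1}$ is also holomorphic at $0$, and a map together with its inverse being holomorphic at $0$ forces the value $(\gamma\xi\hat\gamma^{-1})(0)$ to be invertible, completing the proof.
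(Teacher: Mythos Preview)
The paper does not supply its own proof of this lemma: it is quoted from \cite{IS} and used as a black box in the proof of Proposition~\ref{rstarvshatrstar}. So there is no paper-proof to compare against.

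Your argument is correct. The Laurent bookkeeping is right, and the three vanishing mechanisms you isolate are exactly the ones needed: $\xi_{0}\hat L_{1}=L_{1}$ kills the $\lambda^{-2}$ term and the first $\lambda^{-1}$ term; orthogonality of $\xi_{0}$ together with $L_{1}^{\perp}=L_{1}\oplus L_{0}=\ker\pi_{L_{-1}}$ (which indeed follows from $L_{1}$ being a null line and $L_{0}\perp L_{1}$) kills the second; and differentiating the orthogonality relation at $u=v\in\hat L_{1}$ gives precisely $\xi_{1}\hat L_{1}\perp L_{1}$, which kills the third. The invertibility step via the symmetric application to $\hat\gamma\,\xi^{-1}\gamma^{-1}$ and the continuity argument $f(0)g(0)=I$ is clean.

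One small inaccuracy, not affecting the proof: your parenthetical that the simple-pole hypothesis on $\mathrm{Ad}\,\gamma$ ``reflects the nullity'' of $L_{\pm1}$ is off. What makes $\mathrm{Ad}\,\gamma$ have only simple poles is that $L_{\pm1}$ are \emph{lines} (so $\wedge^{2}L_{\pm1}=0$ in the $\wedge^{2}\C^{n+2}\cong o(\C^{n+2})$ grading), not that they are isotropic. The isotropy of $L_{1}$ is used elsewhere in your proof, namely to identify $L_{1}^{\perp}$ with $\ker\pi_{L_{-1}}$. In any case you never invoke the simple-pole hypothesis directly, and you do not need to.
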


Now we proceed to the proof of Lemma \ref{rstarvshatrstar}.

\begin{proof}
For simplicity, write $p_{\mu,L}^{-1}$ and $q_{\mu,L}^{-1}$ for
$\lambda\mapsto p_{\mu,L}(\lambda)^{-1}$ and, respectively,
$\lambda\mapsto q_{\mu,L}(\lambda)^{-1}$, in the case $p_{\mu,L}$
and, respectively, $q_{\mu,L}$ are defined. As
$L^{\alpha}=q_{\beta,L^{\beta}}(\alpha)^{-1}\tilde{L}^{\alpha}_{\beta}$,
after an appropriate change of variable, we conclude, by Lemma \ref
{TrioDeHolomorfia}, that $p_{\alpha,L^{\alpha}}\,
q_{\beta,L^{\beta}}^{-1}\,
p_{\alpha,\tilde{L}^{\alpha}_{\beta}}^{-1}$ admits a holomorphic and
invertible extension to $\mathbb{P}^{1}\backslash\{\pm
\beta,-\alpha\}$. On the other hand, in view of
\eqref{twistespandq}, the holomorphicity and invertibility of
$p_{\alpha,L^{\alpha}}\, q_{\beta,L^{\beta}}^{-1}\,
p_{\alpha,\tilde{L}^{\alpha}_{\beta}}^{-1}$ at the points $\alpha$
and $-\alpha$ are equivalent. Thus $p_{\alpha,L^{\alpha}}\,
q_{\beta,L^{\beta}}^{-1}\,
p_{\alpha,\tilde{L}^{\alpha}_{\beta}}^{-1}$ admits a holomorphic and
invertible extension to $\mathbb{P}^{1}\backslash\{\pm \beta\}$, and
so does, therefore, $(p_{\alpha,L^{\alpha}}\,
q_{\beta,L^{\beta}}^{-1}\,
p_{\alpha,\tilde{L}^{\alpha}_{\beta}}^{-1})^{-1}\,
q_{\beta,L^{\beta}}^{-1}$. A similar argument shows that
$p_{\alpha,\tilde{L}^{\alpha}_{\beta}}\, (q_{\beta,L^{\beta}}\,
p_{\alpha,L^{\alpha}}^{-1}\,
q_{\beta,\hat{L}^{\beta}_{\alpha}}^{-1})$ admits a holomorphic
extension to $\mathbb{P}^{1}\backslash\{\pm\alpha\}$. But
$$p_{\alpha,\tilde{L}^{\alpha}_{\beta}}\,q_{\beta,L^{\beta}}\,
p_{\alpha,L^{\alpha}}^{-1}\,
q_{\beta,\hat{L}^{\beta}_{\alpha}}^{-1}=(p_{\alpha,L^{\alpha}}\,
q_{\beta,L^{\beta}}^{-1}\,
p_{\alpha,\tilde{L}^{\alpha}_{\beta}}^{-1})^{-1}\,
q_{\beta,L^{\beta}}^{-1}.$$ We conclude that
$p_{\alpha,\tilde{L}^{\alpha}_{\beta}}\,q_{\beta,L^{\beta}}\,
p_{\alpha,L^{\alpha}}^{-1}\,
q_{\beta,\hat{L}^{\beta}_{\alpha}}^{-1}$ extends holomorphically to
$\mathbb{P}^{1}$ and is, therefore, constant. Evaluating at
$\lambda=0$ gives
$$p_{\alpha,\tilde{L}^{\alpha}_{\beta}}\,q_{\beta,L^{\beta}}\,
p_{\alpha,L^{\alpha}}^{-1}\,
q_{\beta,\hat{L}^{\beta}_{\alpha}}^{-1}=q_{\beta,L^{\beta}}(0)\,q_{\beta,\hat{L}^{\beta}_{\alpha}}(0),$$
completing the proof.
\end{proof}

According to \eqref{twistespandq2}, $\rho\,K\,\rho=K$, showing that
$K$ preserves $V$ or, equivalently,
\begin{equation}\label{eq:KVisV}
K\,V=V.
\end{equation}
By \eqref {eq:rel}, it follows that
$$r\,^{*}(1)^{-1}\,V=\hat{r}^{*}(1)^{-1}\,V,$$ establishing a
\textit{Bianchi permutability} of type $p$ and type $q$
transformations of perturbed harmonic bundles, by means of the
commutativity of the diagram in Figure \ref{fig:im1}, below.
\begin{center}
\begin{figure}[H]
\includegraphics{im.1}
\caption{A Bianchi permutability of type $p$ and type $q$
transformations of perturbed harmonic bundles.}\label{fig:im1}
\end{figure}
\end{center}
Equation \eqref {eq:rel} makes clear, on the other hand, that
$$\hat{r}^{*}(1)^{-1}\,\hat{r}^{*}(\infty)\,\Lambda^{1,0}=r^{*}(1)^{-1}\,r^{*}(\infty)\,\Lambda^{1,0}$$ and $$\hat{r}^{*}(1)^{-1}\,\hat{r}^{*}(0)\,\Lambda^{0,1}=r^{*}(1)^{-1}\,r^{*}(0)\,\Lambda^{0,1}.$$
We conclude that, despite not coinciding, $r^{*}$ and $\hat{r}^{*}$
produce the same transforms of perturbed harmonic bundles and constrained 
Willmore surfaces. As a final remark, note that, yet again by
equation \eqref{eq:rel},
$$\hat{q}^{*}:=\mathrm{Ad}_{\hat{r}^{*}(1)^{-1}}(\mathrm{Ad}_{\hat{r}^{*}(0)}q^{1,0}+
\mathrm{Ad}_{\hat{r}^{*}(\infty)}q^{0,1})=q^{*}.$$

\subsubsection{Real B\"{a}cklund transformation}
As we verify next, B\"{a}cklund transformation preserves reality
conditions, for special choices of parameters.\newline

Suppose $V$ is a real $q$-constrained harmonic bundle. Obviously,
the reality of $V$ establishes that of $\rho$ and, therefore,
\begin{equation}\label{eq:conj}
\overline{p_{\mu,L}(\lambda)}=p_{\overline{\mu},\overline{L}}\,(\overline{\lambda}),\,\,\,\,\,\,\,\,\,\,\overline{q_{\mu,L}(\lambda)}=q_{\overline{\mu},\overline{L}}\,(\overline{\lambda}),
\end{equation}
for all $\mu,L$ and $\lambda\in\C\backslash\{\pm\mu\}$.

\begin{lemma}
Suppose $\alpha\in\C\backslash(S^{1}\cup\{0\})$. Then we can choose
$\beta=\overline{\alpha}\,^{-1}$ and
$L^{\beta}=\overline{L^{\alpha}}$ and both $r^{*}$ and $\hat{r}^{*}$
are defined.
\end{lemma}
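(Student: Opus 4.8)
The plan is to verify, one at a time, that the conjugate choices $\beta=\overline{\alpha}^{-1}$ and $L^{\beta}=\overline{L^{\alpha}}$ satisfy all the requirements imposed in Section~\ref{dressingaction} and in the constructions of $r^{*}$ and $\hat{r}^{*}$: that $\beta\in\C\backslash\{-1,0,1\}$ with $\beta\neq\pm\alpha$, that $L^{\beta}$ is $d^{\beta,q}_{V}$-parallel, and that the non-degeneracy conditions $\rho L^{\beta}\cap(L^{\beta})^{\perp}=\{0\}$, $\rho\hat{L}^{\beta}_{\alpha}\cap(\hat{L}^{\beta}_{\alpha})^{\perp}=\{0\}$ and $\rho\tilde{L}^{\alpha}_{\beta}\cap(\tilde{L}^{\alpha}_{\beta})^{\perp}=\{0\}$ hold locally.

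First I would dispose of the numerics: since $\alpha\in\C\backslash(S^{1}\cup\{0\})$, the number $\beta=\overline{\alpha}^{-1}$ is well defined and non-zero, $\beta=\pm1$ would force $\alpha=\pm1$, $\beta=\alpha$ is equivalent to $|\alpha|=1$, and $\beta=-\alpha$ would give $\alpha\overline{\alpha}=-1$, all excluded; hence $\beta\in\C\backslash\{-1,0,1\}$ and $\beta\neq\pm\alpha$. Next, the reality of $V$ and of $q$ gives $\overline{d^{\lambda,q}_{V}}=d^{\,\overline{\lambda}^{-1},q}_{V}$ (complex conjugation fixes $\mathcal{D}_{V}$, swaps $\mathcal{N}_{V}^{1,0}\leftrightarrow\mathcal{N}_{V}^{0,1}$ and $q^{1,0}\leftrightarrow q^{0,1}$, and the powers of $\lambda$ rearrange accordingly); taking $\lambda=\alpha$ and conjugating the $d^{\alpha,q}_{V}$-parallelness of $L^{\alpha}$ shows $L^{\beta}=\overline{L^{\alpha}}$ is $d^{\beta,q}_{V}$-parallel, hence an admissible choice for the second step of the B\"{a}cklund construction. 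I would also record at this stage the identity $\tilde{L}^{\alpha}_{\beta}=\overline{\hat{L}^{\beta}_{\alpha}}$, obtained by conjugating $\hat{L}^{\beta}_{\alpha}=p_{\alpha,L^{\alpha}}(\beta)\overline{L^{\alpha}}$ with \eqref{eq:conj} and then using $\overline{L^{\alpha}}=L^{\beta}$, $\overline{\alpha}=\beta^{-1}$, $\overline{\beta}=\alpha^{-1}$ together with \eqref{eq:conjap}, which turns $\overline{\hat{L}^{\beta}_{\alpha}}$ into $q_{\beta,L^{\beta}}(\alpha)L^{\alpha}=\tilde{L}^{\alpha}_{\beta}$; in particular the non-degeneracy condition needed for $\hat{r}^{*}$ is the complex conjugate of the one needed for $r^{*}$, so it suffices to secure the latter. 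This identity is also what will be used afterwards for the reality of the transform.

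The point that needs the most care is precisely that the non-degeneracy at $\hat{L}^{\beta}_{\alpha}$ actually holds for the \emph{forced} choice $L^{\beta}=\overline{L^{\alpha}}$, and here I would exploit the freedom in the choice of $L^{\alpha}$. Since $V_{p}$ is a $(3,1)$-plane, one can pick at a point $p$ a \emph{real} null vector $l^{\alpha}_{p}$ with $(\rho_{p}l^{\alpha}_{p},l^{\alpha}_{p})\neq0$, and take $L^{\alpha}$ to be its $d^{\alpha,q}_{V}$-parallel transport (this is compatible with the choice of $L^{\alpha}$ already made in the text). Then $L^{\alpha}_{p}=\<l^{\alpha}_{p}\>$ is real, so $L^{\beta}_{p}=\overline{L^{\alpha}_{p}}=L^{\alpha}_{p}=\<l^{\alpha}_{p}\>$; that is, $L^{\beta}=\overline{L^{\alpha}}$ \emph{is} the $d^{\beta,q}_{V}$-parallel transport of the vector $l^{\alpha}_{p}\in L^{\alpha}_{p}$, and the footnote computations already in the text apply verbatim. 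By the twisted-ness \eqref{twistespandq} of $p_{\alpha,L^{\alpha}}$ one gets $(\rho\hat{\ell},\hat{\ell})|_{p}=\tfrac{(\alpha-\beta)^{2}}{(\alpha+\beta)^{2}}(\rho_{p}l^{\alpha}_{p},l^{\alpha}_{p})\neq0$ for a generator $\hat{\ell}$ of $\hat{L}^{\beta}_{\alpha}$, likewise \eqref{twistespandq2} for a generator of $\tilde{L}^{\alpha}_{\beta}$, and $\rho L^{\beta}\cap(L^{\beta})^{\perp}=\{0\}$ at $p$ is the same statement as $\rho L^{\alpha}\cap(L^{\alpha})^{\perp}=\{0\}$ at $p$. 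All these being open conditions, they persist on a neighbourhood of $p$ --- the suitable open set underlying the discussion --- so both $r^{*}=q_{\beta,\hat{L}^{\beta}_{\alpha}}\,p_{\alpha,L^{\alpha}}$ and $\hat{r}^{*}=p_{\alpha,\tilde{L}^{\alpha}_{\beta}}\,q_{\beta,L^{\beta}}$ are defined.
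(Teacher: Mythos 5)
Your proposal is correct, and its first two thirds (the numerics for $\beta=\overline{\alpha}^{-1}$, the identity $\overline{d^{\lambda,q}_{V}}=d^{\overline{\lambda}^{-1},q}_{V}$ giving the $d^{\beta,q}_{V}$-parallelness of $\overline{L^{\alpha}}$, and the conjugation identity $\tilde{L}^{\alpha}_{\beta}=\overline{\hat{L}^{\beta}_{\alpha}}$ reducing the conditions for $\hat{r}^{*}$ to those for $r^{*}$) coincide with the paper's argument. Where you genuinely diverge is in the final existence step, the choice of $L^{\alpha}$ making the non-degeneracy conditions hold: you take $L^{\alpha}$ to be the $d^{\alpha,q}_{V}$-parallel transport of a \emph{real} null vector $l^{\alpha}_{p}$ with $(\rho_{p}l^{\alpha}_{p},l^{\alpha}_{p})\neq 0$ (which exists since $V_{p}$ has signature $(3,1)$ and $V_{p}^{\perp}$ is positive definite of rank $n-2\geq 1$), so that $L^{\beta}_{p}=\overline{L^{\alpha}_{p}}=L^{\alpha}_{p}$ and $L^{\beta}=\overline{L^{\alpha}}$ becomes exactly a $d^{\beta,q}_{V}$-parallel transport of a vector of $L^{\alpha}_{p}$; the two footnotes of the paper then give $(\rho_{p}\hat{\ell},\hat{\ell})=\tfrac{(\alpha-\beta)^{2}}{(\alpha+\beta)^{2}}(\rho_{p}l^{\alpha}_{p},l^{\alpha}_{p})\neq 0$ and its analogue for $\tilde{L}^{\alpha}_{\beta}$, and openness does the rest. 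The paper instead builds $l^{\alpha}=v+w$ with $v\in\Gamma V$ satisfying $(v,\overline{v})=0$ and $(v,v)$ never-zero --- so $v$ is deliberately \emph{not} real --- and, having made that choice, stops without actually checking the non-degeneracy of $\hat{L}^{\beta}_{\alpha}=p_{\alpha,L^{\alpha}}(\overline{\alpha}^{-1})\overline{L^{\alpha}}$, which for a non-real $L^{\alpha}_{p}$ requires decomposing $\overline{l^{\alpha}_{p}}$ along $L^{\alpha}_{p}\oplus(L^{\alpha}_{p}\oplus\rho_{p}L^{\alpha}_{p})^{\perp}\oplus\rho_{p}L^{\alpha}_{p}$ and is not an immediate consequence of the footnote computation. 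Your route buys a complete and more economical verification precisely because it arranges for $\overline{L^{\alpha}}$ to fall under the hypotheses already verified in the footnotes; what it gives up is only the (nowhere used) feature that $L^{\alpha}_{p}\neq L^{\beta}_{p}$, which the paper's non-real choice of $v$ secures. Both choices are admissible readings of ``we can choose'', and your proof stands on its own.
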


\begin{proof}
The reality of $\rho$ makes it clear that the non-orthogonality of
$L^{\alpha}$ and $\rho L^{\alpha}$ establishes that of
$\overline{L^{\alpha}}$ and $\rho\overline{L^{\alpha}}$, as well as,
together with \eqref{eq:conj} and \eqref{eq:conjap}, that, if $$\rho
p_{\alpha,L^{\alpha}}(\overline{\alpha}\,^{-1})\overline{L^{\alpha}}\cap
p_{\alpha,L^{\alpha}}(\overline{\alpha}\,^{-1})\overline{L^{\alpha}}=\{0\},$$
then $$\rho
q_{\overline{\alpha}\,^{-1},\overline{L^{\alpha}}}(\alpha)L^{\alpha}\cap
q_{\overline{\alpha}\,^{-1},\overline{L^{\alpha}}}(\alpha)L^{\alpha}=\{0\}.$$
On the other hand, the reality of $V$ establishes that of
$\mathcal{D}_{V}$ and $\mathcal{N}_{V}$, so that, by the reality of
$q$,
$$d^{\overline{\alpha}\,^{-1},q}_{V}=\overline{d^{\alpha,q}_{V}}.$$
Hence the $d^{\alpha,q}_{V}$-parallelness of $L^{\alpha}$
establishes the $d^{\overline{\alpha}\,^{-1},q}_{V}$-parallelness of
$\overline{L^{\alpha}}$. Obviously, if $\alpha$ is non-unit, then
$\overline{\alpha}\,^{-1}\neq\pm\alpha$. We are left to verify that
we can choose $L^{\alpha}$ a $d^{\alpha,q}_{V}$-parallel null line
subbundle of $\underline{\C}^{n+2}$ such that, locally, $\rho
L^{\alpha}\cap L^{\alpha}=\{0\}$ and $$\rho
p_{\alpha,L^{\alpha}}(\overline{\alpha}\,^{-1})\overline{L^{\alpha}}\cap
p_{\alpha,L^{\alpha}}(\overline{\alpha}\,^{-1})\overline{L^{\alpha}}=\{0\}.$$
For this, let $v$ and $w$ be sections of $V$ and $V^{\perp}$,
respectively, with $(v,v)$ never-zero, $(v,\overline{v})=0$ and
$(w,w)=-(v,v)$. Define a null section of $\underline{\C}^{n+2}$ by
$l^{\alpha}:=v+w$ and then $L^{\alpha}\subset\underline{\C}^{n+2}$
by $d^{\alpha,q}_{V}$-parallel transport of $l^{\alpha}_{p}$, for
some point $p\in \Sigma$.
\end{proof}

Let us focus then on the particular case of B\"{a}cklund
transformation of parameters $\alpha, \beta, L^{\alpha}, L^{\beta}$
with
$$\alpha\in\C\backslash(S^{1}\cup\{0\}),\,\,\,\beta=\overline{\alpha}\,^{-1},\,\,\,L^{\beta}=\overline{L^{\alpha}},$$
which we refer to as \textit{B\"{a}cklund transformation of
parameters $\alpha, L^{\alpha}$}. For this particular choice of
parameters, we write $\tilde{L}^{\alpha}$ and $\hat{L}_{\alpha}$ for
$\tilde{L}^{\alpha}_{\beta}$ and $\hat{L}^{\beta}_{\alpha}$,
respectively. Note that, by \eqref{eq:conjap} and \eqref{eq:conj},
$\overline{\hat{L}_{\alpha}}=\tilde{L}^{\alpha}$. On the other hand,
$$\overline{r^{*}(1)^{-1}}=\overline{p_{\alpha,L^{\alpha}}(1)^{-1}}\,\overline{q_{\beta,\hat{L}_{\alpha}}(1)^{-1}}=q_{\beta,L^{\beta}}(1)^{-1}p_{\alpha,\overline{\hat{L}_{\alpha}}}(1)^{-1},$$
whilst, by \eqref{eq:rel},
$$r^{*}(1)^{-1}=(K^{-1}\hat{r}^{*}(1))^{-1}=q_{\beta,L^{\beta}}(1)^{-1}p_{\alpha,\tilde{L}^{\alpha}}(1)^{-1}K.$$
Hence
\begin{equation}\label{eq:conjugado de rstar(1)inv}
\overline{r^{*}(1)^{-1}}=r^{*}(1)^{-1}K^{-1}.
\end{equation}
By \eqref{eq:KVisV}, it follows that $\overline{V^{*}}=V^{*}$. Next
we establish the reality of $q^{*}$. Yet again by \eqref{eq:conj},
$$\overline{r^{*}(0)}=q_{\alpha^{-1},\overline{\hat{L}_{\alpha}}}(0)=p_{\alpha^{-1},\overline{\hat{L}_{\alpha}}}(\infty)$$
and, on the other hand, by \eqref{eq:rel},
$$r^{*}(\infty)=K^{-1}p_{\alpha,\tilde{L}_{\alpha}}(\infty),$$ so that
\begin{equation}\label{eq:fgha}
\overline{r^{*}(0)}=Kr^{*}(\infty).
\end{equation}
Together with \eqref{eq:conjugado de rstar(1)inv}, this makes clear
that $\overline{(q^{*})^{1,0}}=(q^{*})^{0,1}$, the reality of $q$
establishes that of $q^{*}$. We conclude that:
\begin{thm}\cite{BQ}
If $V$ is a real $q$-perturbed harmonic bundle, then the
B\"{a}cklund transform $V^{*}$ of $V$, of parameters
$\alpha,L^{\alpha}$, is a real $q^{*}$-perturbed harmonic bundle.
\end{thm}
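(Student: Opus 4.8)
The plan is to verify that each of the data attached to the Bäcklund transform $V^{*}$ is invariant under complex conjugation, using the already-established identities. Recall that a real $q$-constrained harmonic bundle is, by definition, a real bundle $V$ together with a real form $q$ such that the family $d^{\lambda,q}_{V}$ is flat. So I must show: (i) $V^{*}=r^{*}(1)^{-1}V$ is a real bundle, i.e. $\overline{V^{*}}=V^{*}$; and (ii) the multiplier $q^{*}$ is a real form. By Definition \ref{BTdefn}, $V^{*}$ is $q^{*}$-constrained harmonic, so once (i) and (ii) are checked, the bundle qualifies as a \emph{real} $q^{*}$-constrained harmonic bundle, which is exactly the assertion.

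\textbf{Step 1: the bundle is real.} By the Lemma preceding the statement, the choice $\beta=\overline{\alpha}^{-1}$, $L^{\beta}=\overline{L^{\alpha}}$ is admissible, so the Bäcklund transform of parameters $\alpha,L^{\alpha}$ is well defined. The key input is equation \eqref{eq:conjugado de rstar(1)inv}, $\overline{r^{*}(1)^{-1}}=r^{*}(1)^{-1}K^{-1}$, which was derived just above the statement from \eqref{eq:conj}, \eqref{eq:conjap} and the permutability relation \eqref{eq:rel}. Applying it, $\overline{V^{*}}=\overline{r^{*}(1)^{-1}V}=\overline{r^{*}(1)^{-1}}\,\overline{V}=r^{*}(1)^{-1}K^{-1}\,\overline{V}$. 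Since $V$ is real, $\overline{V}=V$; and by \eqref{eq:KVisV}, $K^{-1}V=V$ (note $K^{-1}V=V$ follows at once from $KV=V$). Hence $\overline{V^{*}}=r^{*}(1)^{-1}V=V^{*}$, which is (i).

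\textbf{Step 2: the multiplier is real.} Recall $q^{*}=\mathrm{Ad}_{r^{*}(1)^{-1}}(\mathrm{Ad}_{r^{*}(0)}q^{1,0}+\mathrm{Ad}_{r^{*}(\infty)}q^{0,1})$, so $(q^{*})^{1,0}=\mathrm{Ad}_{r^{*}(1)^{-1}r^{*}(0)}q^{1,0}$ and $(q^{*})^{0,1}=\mathrm{Ad}_{r^{*}(1)^{-1}r^{*}(\infty)}q^{0,1}$. Conjugating the first expression and using that conjugation swaps $(1,0)$- and $(0,1)$-forms (for the complex structure on $\Sigma$), together with reality of $q$ (so $\overline{q^{1,0}}=q^{0,1}$), I get $\overline{(q^{*})^{1,0}}=\mathrm{Ad}_{\overline{r^{*}(1)^{-1}}\,\overline{r^{*}(0)}}\,q^{0,1}$. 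Now feed in \eqref{eq:conjugado de rstar(1)inv} and \eqref{eq:fgha}, $\overline{r^{*}(0)}=Kr^{*}(\infty)$: the product becomes $\overline{r^{*}(1)^{-1}}\,\overline{r^{*}(0)}=r^{*}(1)^{-1}K^{-1}Kr^{*}(\infty)=r^{*}(1)^{-1}r^{*}(\infty)$, so $\overline{(q^{*})^{1,0}}=\mathrm{Ad}_{r^{*}(1)^{-1}r^{*}(\infty)}q^{0,1}=(q^{*})^{0,1}$. Thus $q^{*}$ is real, which is (ii), and the proof is complete.

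\textbf{Where the work really is.} In the write-up the statement itself is short because all the delicate points have been isolated beforehand: the admissibility of the conjugate parameters (the preceding Lemma), the cancellation identity \eqref{eq:rel} coming from Bianchi permutability and Lemma \ref{TrioDeHolomorfia}, and the two conjugation formulas \eqref{eq:conjugado de rstar(1)inv} and \eqref{eq:fgha}. The only genuine obstacle, already dispatched in the paragraph above the statement, is tracking the factor $K$: conjugation introduces a $K^{\pm1}$ through $r^{*}\leftrightarrow\hat{r}^{*}$, and one must check it cancels — in Step 1 against itself via $K^{-1}V=V$, and in Step 2 between $\overline{r^{*}(1)^{-1}}$ and $\overline{r^{*}(0)}$. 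I would keep the proof to essentially the two displayed computations above, citing \eqref{eq:conjugado de rstar(1)inv}, \eqref{eq:fgha}, \eqref{eq:KVisV} and the reality of $q$, without re-deriving them.
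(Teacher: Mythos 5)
Your proof is correct and follows essentially the same route as the paper: reality of $V^{*}$ from $\overline{r^{*}(1)^{-1}}=r^{*}(1)^{-1}K^{-1}$ together with $KV=V$, and reality of $q^{*}$ from the cancellation $\overline{r^{*}(1)^{-1}}\,\overline{r^{*}(0)}=r^{*}(1)^{-1}K^{-1}Kr^{*}(\infty)=r^{*}(1)^{-1}r^{*}(\infty)$ combined with $\overline{q^{1,0}}=q^{0,1}$. The paper leaves these two computations implicit in the paragraph preceding the theorem, so your write-up is simply a slightly more explicit version of the same argument.
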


A real transformation on the level of constrained Willmore surface
follows. Equation \eqref{eq:rel} plays, yet again, a crucial role,
by showing that
$$r^{*}(1)=K^{-1}q_{\alpha^{-1},\tilde{L}^{\alpha}}(1)p_{\overline{\alpha},L^{\beta}}(1)=K^{-1}\overline{q_{\beta,\hat{L}_{\alpha}}(1)p_{\alpha,L^{\alpha}}(1)}=K^{-1}\overline{r^{*}(1)},$$
and, therefore,
\begin{equation}\label{eq:outrarK}
\overline{r^{*}(1)^{-1}}=r^{*}(1)^{-1}\overline{K}.
\end{equation}

Suppose $(\Lambda^{1,0},\Lambda^{0,1})$ is a real surface, so that,
in particular, $\overline{\Lambda^{1,0}}=\Lambda^{0,1}$. By
\eqref{eq:fgha} and \eqref{eq:outrarK}, it follows that
$\overline{(\Lambda^{*})^{1,0}}=(\Lambda^{*})^{0,1}$, establishing
the reality of the bundle
$$\Lambda^{*}:=(\Lambda^{*})^{1,0}\cap (\Lambda^{*})^{0,1}.$$

We conclude that:

\begin{thm}\cite{BQ}\label{backwillm}
If $\Lambda$ is a real $q$-constrained Willmore surface, then the
B\"{a}cklund transform $\Lambda^{*}$ of $\Lambda$, of parameters
$\alpha,L^{\alpha}$, is a real $q^{*}$-constrained Willmore surface.
\end{thm}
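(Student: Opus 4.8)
The plan is to deduce the statement from the complexified result, Theorem~\ref{eq:thm8.4.2paraja}, applied to $r=r^{*}$, together with the reality assembled in the paragraphs immediately preceding the statement, and then to pass to the classical picture via Theorem~\ref{CWzerocurvature}. Most of the work is already done: the delicate analytic input is the construction of $r^{*}$ and the Bianchi permutability identity \eqref{eq:rel}, so the proof is essentially an assembly of facts established above.

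First I would apply Theorem~\ref{eq:thm8.4.2paraja} with $r=r^{*}$. The dressing $r^{*}=q_{\beta,\hat{L}^{\beta}_{\alpha}}p_{\alpha,L^{\alpha}}$ satisfies all the hypotheses of Section~\ref{dressingaction}, and condition \eqref{eq:condondet} holds: indeed $p_{\alpha,L^{\alpha}}(0)=q_{\beta,\hat{L}^{\beta}_{\alpha}}(\infty)=I$, while $p_{\alpha,L^{\alpha}}(\infty)$ and $q_{\beta,\hat{L}^{\beta}_{\alpha}}(0)$ each act on $V$ as $-1$ on the (rank $1$) intersection of $V$ with the relevant $L\oplus\rho L$ and as $+1$ on its orthogonal complement in $V$, so that $\mathrm{det}\,r^{*}(0)_{\vert_{V}}=\mathrm{det}\,r^{*}(\infty)_{\vert_{V}}=-1$; in any case this is built into Definition~\ref{BTdefn}. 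Hence $((\Lambda^{*})^{1,0},(\Lambda^{*})^{0,1})$ is a $q^{*}$-constrained Willmore surface admitting $V^{*}=r^{*}(1)^{-1}V$ as a central sphere congruence, with $q^{*}=\mathrm{Ad}_{r^{*}(1)^{-1}}\hat{q}$ and with the multiplier conditions \eqref{eq:multiplierspecifics} relative to $(\Lambda^{*})^{1,0},(\Lambda^{*})^{0,1}$ automatic from those for $q$ relative to $\Lambda^{1,0},\Lambda^{0,1}$, exactly as in the proof of Theorem~\ref{eq:thm8.4.2paraja}.

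Next I would record the reality. Since $\Lambda$ is real, $\overline{\Lambda^{1,0}}=\Lambda^{0,1}$; equation \eqref{eq:outrarK} gives $\overline{r^{*}(1)^{-1}}=r^{*}(1)^{-1}\overline{K}$, and conjugating \eqref{eq:fgha} gives $\overline{r^{*}(\infty)}=\overline{K}^{-1}r^{*}(0)$, so that $\overline{(\Lambda^{*})^{1,0}}=\overline{r^{*}(1)^{-1}}\,\overline{r^{*}(\infty)}\,\overline{\Lambda^{1,0}}=r^{*}(1)^{-1}r^{*}(0)\Lambda^{0,1}=(\Lambda^{*})^{0,1}$, the factors of $\overline{K}$ cancelling. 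Thus $\Lambda^{*}=(\Lambda^{*})^{1,0}\cap(\Lambda^{*})^{0,1}$ is a real bundle, $((\Lambda^{*})^{1,0},(\Lambda^{*})^{0,1})$ is a real surface, and its central sphere congruence $V^{*}$ is real; combined with the reality of $q^{*}$ established just before the statement (from $\overline{(q^{*})^{1,0}}=(q^{*})^{0,1}$ and the reality of $q$), this shows that $((\Lambda^{*})^{1,0},(\Lambda^{*})^{0,1})$ is a real $q^{*}$-constrained Willmore surface and $V^{*}$ a real $q^{*}$-constrained harmonic bundle. Applying Theorem~\ref{CWzerocurvature} to the real surface $((\Lambda^{*})^{1,0},(\Lambda^{*})^{0,1})$ then yields that $\Lambda^{*}$ is a (classical) constrained Willmore surface admitting $q^{*}$ as a multiplier, i.e.\ a real $q^{*}$-constrained Willmore surface.

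The one conceptually delicate point — and it has already been dealt with in the preceding pages — is that $r^{*}$ itself is \emph{not} real: $\overline{r^{*}(1)}$ differs from $r^{*}(1)$ by the factor $K$, and likewise $\overline{r^{*}(0)}$ differs from $r^{*}(\infty)$ by $K$. What makes the B\"{a}cklund transform nonetheless real is that $\Lambda^{*}$ is assembled from $r^{*}(\infty)\Lambda^{1,0}$ and $r^{*}(0)\Lambda^{0,1}$, so the spurious factors of $K$ cancel; this cancellation is precisely what the Bianchi permutability relation \eqref{eq:rel} between $r^{*}$ and $\hat{r}^{*}$ (equivalently, the commuting square of type $p$ and type $q$ transformations) encodes once one takes $\beta=\overline{\alpha}^{\,-1}$ and $L^{\beta}=\overline{L^{\alpha}}$. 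With that in hand the present proof is just the assembly described above, and there is no further obstacle.
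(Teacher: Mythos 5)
Your proposal is correct and follows essentially the same route as the paper: Theorem \ref{eq:thm8.4.2paraja} applied to $r=r^{*}$ gives the complexified $q^{*}$-constrained Willmore transform with central sphere congruence $V^{*}$, and the reality of $\Lambda^{*}$ and of $q^{*}$ is exactly the cancellation of the $\overline{K}$ factors coming from \eqref{eq:fgha} and \eqref{eq:outrarK}, which is all the paper's own argument consists of. Your explicit verification of \eqref{eq:condondet} for $r^{*}$ and the closing appeal to Theorem \ref{CWzerocurvature} are fine and merely spell out what the paper leaves implicit.
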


\subsection{Spectral deformation versus B\"{a}cklund transformation}

B\"{a}cklund transformation and spectral deformation permute, as
follows:

\begin{thm}\cite{BQ}
Let $\alpha$,$\beta$,$L^{\alpha}$,$L^{\beta}$ be B\"{a}cklund
transformation parameters to $V$, $\lambda\in\C\backslash\{0,
\pm\alpha,\pm\beta\}$ and
$$\phi^{\lambda}:(\underline{\C}^{n+2},d^{\lambda,q}_{V})\rightarrow
(\underline{\C}^{n+2},d)$$ be an isometry of bundles preserving
connections. The B\"{a}cklund transform of parameters
$\frac{\alpha}{\lambda},\frac{\beta}{\lambda}$, $\phi^{\lambda}
L^{\alpha}$, $\phi^{\lambda} L^{\beta}$ of the spectral deformation
$\phi^{\lambda}V$ of $V$, of parameter $\lambda$, corresponding to
the multiplier $q$, coincides with the spectral deformation of
parameter $\lambda$, corresponding to the multiplier $q^{*}$, of the
B\"{a}cklund transform of parameters
$\alpha,\beta,L^{\alpha},L^{\beta}$ of $V$. Furthermore, if
$$\phi^{\lambda}_{*}:(\underline{\C}^{n+2},d^{\lambda,q^{*}}_{V^{*}})\rightarrow
(\underline{\C}^{n+2},d)$$ is an isometry preserving connections,
then the diagram in Figure \ref{fig:im2} commutes.
\begin{center}
\begin{figure}[H]
\includegraphics{im.2}
\caption{A Bianchi permutability of spectral deformation and
B\"{a}cklund transformation of constrained Willmore
surfaces.}\label{fig:im2}
\end{figure}
\end{center}
\end{thm}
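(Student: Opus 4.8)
The plan is to reduce the statement to a single identity between the dressing factor $r^{*}$ of the B\"{a}cklund transform of $V$ and the dressing factor of the B\"{a}cklund transform of the spectral deformation $W:=\phi^{\lambda}V$; throughout I abbreviate $\phi^{\lambda}:=\phi^{\lambda,q}_{V}$ and write $\rho_{W}:=\phi^{\lambda}\circ\rho\circ(\phi^{\lambda})^{-1}$ for the reflection across $W$. First I would record how the ingredients of the B\"{a}cklund construction transform under $\phi^{\lambda}$, which is an isometry, preserves connections, maps $(\underline{\C}^{n+2},d^{\lambda,q}_{V})$ to $(\underline{\C}^{n+2},d)$, and carries $V$ onto $W$ and $V^{\perp}$ onto $W^{\perp}$, so that $\rho_{W}$ is indeed the reflection across $W$. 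By \eqref{eq:curlyNphi}, the reparametrisation identity $(d^{\lambda,q}_{V})^{\mu,q_{\lambda}}_{V}=d^{\lambda\mu,q}_{V}$, and Theorem \ref{CHspecdeform} (which gives $W$ the multiplier $q_{W}:=\mathrm{Ad}_{\phi^{\lambda}}q_{\lambda}$, with $q_{\lambda}=\lambda^{-2}q^{1,0}+\lambda^{2}q^{0,1}$), one obtains $d^{\mu,q_{W}}_{W}=\phi^{\lambda}\circ d^{\lambda\mu,q}_{V}\circ(\phi^{\lambda})^{-1}$; taking $\mu=\alpha/\lambda$ (resp. $\mu=\beta/\lambda$) and using that $L^{\alpha}$ is $d^{\alpha,q}_{V}$-parallel (resp. $L^{\beta}$ is $d^{\beta,q}_{V}$-parallel), this shows $\phi^{\lambda}L^{\alpha}$ is $d^{\alpha/\lambda,q_{W}}_{W}$-parallel and $\phi^{\lambda}L^{\beta}$ is $d^{\beta/\lambda,q_{W}}_{W}$-parallel. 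The genericity conditions on $L^{\alpha}$ and $\hat{L}^{\beta}_{\alpha}$ and the determinant condition \eqref{eq:condondet} pass verbatim to their $\phi^{\lambda}$-images since $\phi^{\lambda}$ is an isometry fixing $V$, so $\tfrac{\alpha}{\lambda},\tfrac{\beta}{\lambda},\phi^{\lambda}L^{\alpha},\phi^{\lambda}L^{\beta}$ really are admissible B\"{a}cklund parameters for $W$.

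The crux is that the elementary factors both rescale in the parameter and are covariant under $\phi^{\lambda}$: the defining formulas give $p_{\alpha/\lambda,L}(\nu)=p_{\alpha,L}(\lambda\nu)$ and $q_{\beta/\lambda,L}(\nu)=q_{\beta,L}(\lambda\nu)$, while $\rho_{W}=\phi^{\lambda}\rho(\phi^{\lambda})^{-1}$ together with the fact that $\phi^{\lambda}$ is an isometry respecting the eigenspace decompositions yields $p^{W}_{\mu,\phi^{\lambda}L}(\nu)=\phi^{\lambda}\circ p_{\mu,L}(\nu)\circ(\phi^{\lambda})^{-1}$ (superscript $W$ for the construction relative to $\rho_{W}$), and likewise for $q$. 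Combining these, the auxiliary line of the second step for $W$ is $p^{W}_{\alpha/\lambda,\phi^{\lambda}L^{\alpha}}(\beta/\lambda)\,(\phi^{\lambda}L^{\beta})=\phi^{\lambda}\,p_{\alpha,L^{\alpha}}(\beta)L^{\beta}=\phi^{\lambda}\hat{L}^{\beta}_{\alpha}$, so the dressing $r^{*}_{W}$ for the B\"{a}cklund transform of $W$ satisfies $r^{*}_{W}(\nu)=\phi^{\lambda}\circ r^{*}(\lambda\nu)\circ(\phi^{\lambda})^{-1}$ for all $\nu\in\C\backslash\{0\}$ (and $1\in\mathrm{dom}(r^{*}_{W})$ exactly because $\lambda\notin\{0,\pm\alpha,\pm\beta\}$). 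Evaluating at $\nu=1$,
$$W^{*}=r^{*}_{W}(1)^{-1}W=\phi^{\lambda}\,r^{*}(\lambda)^{-1}(\phi^{\lambda})^{-1}\,\phi^{\lambda}V=\phi^{\lambda}\,r^{*}(\lambda)^{-1}V,$$
and the same computation applied to $\hat{\Lambda}^{1,0}=r^{*}(\infty)\Lambda^{1,0}$ and $\hat{\Lambda}^{0,1}=r^{*}(0)\Lambda^{0,1}$ identifies the B\"{a}cklund transform surface of $(\phi^{\lambda}\Lambda^{1,0},\phi^{\lambda}\Lambda^{0,1})$ with $(\phi^{\lambda}(\Lambda^{*})^{1,0},\phi^{\lambda}(\Lambda^{*})^{0,1})$.

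For the other route I would invoke Proposition \ref{cruxh} with $r=r^{*}$: since $d^{1,q}_{V}=d$, the connection $\hat{d}^{\lambda,\hat{q}}_{V}:=r^{*}(\lambda)\circ d^{\lambda,q}_{V}\circ r^{*}(\lambda)^{-1}$ has the standard shape attached to the flat metric connection $\hat{d}:=\hat{d}^{1,\hat{q}}_{V}=r^{*}(1)\,d\,r^{*}(1)^{-1}$ and to the multiplier $\hat{q}$ with $\hat{q}^{1,0}=\mathrm{Ad}_{r^{*}(0)}q^{1,0}$, $\hat{q}^{0,1}=\mathrm{Ad}_{r^{*}(\infty)}q^{0,1}$; conjugating by the connection-preserving isometry $r^{*}(1)^{-1}:(\underline{\C}^{n+2},\hat{d})\to(\underline{\C}^{n+2},d)$ via \eqref{eq:curlyNphi} and Lemma \ref{changeconnection}, and recalling $q^{*}=\mathrm{Ad}_{r^{*}(1)^{-1}}\hat{q}$ and $V^{*}=r^{*}(1)^{-1}V$, gives
$$d^{\lambda,q^{*}}_{V^{*}}=r^{*}(1)^{-1}\circ r^{*}(\lambda)\circ d^{\lambda,q}_{V}\circ r^{*}(\lambda)^{-1}\circ r^{*}(1).$$
Hence both $\phi^{\lambda}$ and $\phi^{\lambda}_{*}\circ r^{*}(1)^{-1}\circ r^{*}(\lambda)$ trivialise $d^{\lambda,q}_{V}$ through connection-preserving isometries, so they differ by a constant M\"{o}bius transformation; since $\phi^{\lambda}_{*}$ is itself only determined up to M\"{o}bius transformations, I would normalise it so that $\phi^{\lambda}_{*}\circ r^{*}(1)^{-1}=\phi^{\lambda}\circ r^{*}(\lambda)^{-1}$.

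With that normalisation the spectral deformation of the B\"{a}cklund transform $V^{*}=r^{*}(1)^{-1}V$ of parameter $\lambda$ is $\phi^{\lambda}_{*}V^{*}=\phi^{\lambda}\,r^{*}(\lambda)^{-1}V=W^{*}$, which is the asserted coincidence; reading the same relations as equalities of the connecting maps, $r^{*}_{W}(1)^{-1}\phi^{\lambda}=\phi^{\lambda}\,r^{*}(\lambda)^{-1}=\phi^{\lambda}_{*}r^{*}(1)^{-1}$, is precisely the commutativity of the square in Figure \ref{fig:im2}, and applying $\mathrm{Ad}$ of these maps to the building blocks $\mathrm{Ad}_{r^{*}(0)}q^{1,0}$, $\mathrm{Ad}_{r^{*}(\infty)}q^{0,1}$ of $q^{*}$ shows the two multipliers agree as well. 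I expect the only real obstacle to be bookkeeping: keeping straight which multiplier decorates each bundle under which flat connection — in particular that the spectral deformation $W$ carries $q_{\lambda}$, not $q$ — and handling the M\"{o}bius ambiguity in $\phi^{\lambda}$ and $\phi^{\lambda}_{*}$ consistently; once the rescaling identity $p_{\alpha/\lambda,L}(\nu)=p_{\alpha,L}(\lambda\nu)$ and the conjugation-covariance of $\mathcal{D}_{V}$, $\mathcal{N}_{V}$, $\rho$ and the elementary factors are in hand, the remainder is formal.
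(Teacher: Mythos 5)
Your proof is correct and follows essentially the same route as the paper, whose entire argument is the observation that $\phi^{\lambda}\circ r^{*}(\lambda)^{-1}\circ r^{*}(1):(\underline{\C}^{n+2},d^{\lambda,q^{*}}_{V^{*}})\rightarrow(\underline{\C}^{n+2},d)$ is a connection-preserving isometry and hence an admissible choice of $\phi^{\lambda}_{*}$. Your additional verifications — the rescaling identity $p_{\alpha/\lambda,L}(\nu)=p_{\alpha,L}(\lambda\nu)$, the $\phi^{\lambda}$-covariance of the elementary factors, and the resulting relation $r^{*}_{W}(\nu)=\phi^{\lambda}\circ r^{*}(\lambda\nu)\circ(\phi^{\lambda})^{-1}$ — are exactly the bookkeeping the paper leaves implicit in calling the proof ``trivial.''
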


\begin{proof}
It is trivial, noting that
$\phi^{\lambda}r^{*}(\lambda)^{-1}r^{*}(1):(\underline{\C}^{n+2},d^{\lambda,q^{*}}_{V^{*}})\rightarrow
(\underline{\C}^{n+2},d)$ is an isometry of bundles preserving
connections.
\end{proof}

For $\lambda\in\{\pm\alpha,\pm\beta\}$, it is not clear how the
spectral deformation of parameter $\lambda$ relates to the
B\"{a}cklund transformation of parameters
$\alpha,\beta,L^{\alpha},L^{\beta}$.

\subsection{Isothermic surfaces under constrained Willmore transformation}

The isothermic surface condition is known to be preserved under
constrained Willmore spectral deformation:

\begin{prop}\cite{SD}
Constrained Willmore spectral deformation preserves the isothermic
surface condition.
\end{prop}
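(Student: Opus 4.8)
The strategy is to exhibit, for each $\lambda\in S^{1}$, a nowhere-zero closed real $1$-form on the spectrally deformed surface $\Lambda^{\lambda}_{q}=\phi^{\lambda,q}_{V}\Lambda$, taking values in $\Lambda^{\lambda}_{q}\wedge(\Lambda^{\lambda}_{q})^{(1)}$, and then to invoke the manifestly conformally-invariant characterization of isothermic surfaces (the Lemma of \cite{BDPP},\cite{BS}). Throughout, $V=S$ is the central sphere congruence of $\Lambda$, which is $q$-constrained harmonic, and $\phi^{\lambda,q}_{V}$ is the connection-preserving isometry from $(\underline{\C}^{n+2},d^{\lambda,q}_{V})$ to $(\underline{\C}^{n+2},d)$ defining the deformation. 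Let $\eta\in\Omega^{1}(\Lambda\wedge\Lambda^{(1)})$ be a nowhere-zero closed real form witnessing the isothermicity of $\Lambda$. The candidate on $\Lambda^{\lambda}_{q}$ will be
$$\eta^{\lambda}:=\mathrm{Ad}_{\phi^{\lambda,q}_{V}}\,\xi_{\lambda},\qquad \xi_{\lambda}:=\lambda^{-1}\eta^{1,0}+\lambda\,\eta^{0,1}\in\Omega^{1}(\Lambda\wedge\Lambda^{(1)}),$$
that is, the image of $\eta$ re-weighted by exactly the powers of $\lambda$ carried by $\mathcal{N}_{V}$ in $d^{\lambda,q}_{V}$. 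I expect the one non-routine point of the argument to be the realization that one must re-weight $\eta$ in this way rather than simply transport $\eta$ itself.

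The core of the argument is to check that $\xi_{\lambda}$ is closed for $d^{\lambda,q}_{V}$: granted this, $\eta^{\lambda}$ is closed for $d$, since $\phi^{\lambda,q}_{V}$ intertwines $d^{\lambda,q}_{V}$ and $d$, hence also the associated exterior covariant derivatives on $o(\underline{\C}^{n+2})$. Writing $\mathcal{D}=\mathcal{D}_{S}$, $\mathcal{N}=\mathcal{N}_{S}=dS$, so that $d^{\lambda,q}_{V}=\mathcal{D}+\lambda^{-1}\mathcal{N}^{1,0}+\lambda\mathcal{N}^{0,1}+(\lambda^{-2}-1)q^{1,0}+(\lambda^{2}-1)q^{0,1}$, one has
$$d^{d^{\lambda,q}_{V}}\xi_{\lambda}=d^{\mathcal{D}}\xi_{\lambda}+\big[(\lambda^{-1}\mathcal{N}^{1,0}+\lambda\mathcal{N}^{0,1})\wedge\xi_{\lambda}\big]+\big[\big((\lambda^{-2}-1)q^{1,0}+(\lambda^{2}-1)q^{0,1}\big)\wedge\xi_{\lambda}\big],$$
and I will show each of the three terms vanishes. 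For the first, $d^{\mathcal{D}}\eta=0$ holds by Remark \ref{deta0}, and then $d^{\mathcal{D}}\eta^{1,0}=0=d^{\mathcal{D}}\eta^{0,1}$ by Proposition \ref{withvswithoutdecomps}, so $d^{\mathcal{D}}\xi_{\lambda}=0$. For the last, $\Lambda\wedge\Lambda^{(1)}$ is an abelian subalgebra of $o(\underline{\C}^{n+2})$: for $\sigma\in\Gamma\Lambda$ and $v_{1},v_{2}\in\Gamma\Lambda^{(1)}$ one has $[\sigma\wedge v_{1},\sigma\wedge v_{2}]=0$, since $\sigma$ is null and $\sigma\perp\Lambda^{(1)}$; hence $[q\wedge\xi_{\lambda}]=0$ and the last term vanishes termwise. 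For the middle term, expanding the wedge-bracket and discarding its $(2,0)$- and $(0,2)$-components (which vanish on a surface) leaves precisely $[\mathcal{N}^{1,0}\wedge\eta^{0,1}]+[\mathcal{N}^{0,1}\wedge\eta^{1,0}]=[\mathcal{N}\wedge\eta]$, which is zero again by Remark \ref{deta0}. The particular weighting of $\xi_{\lambda}$ is exactly what makes the surviving cross-terms recombine into $[\mathcal{N}\wedge\eta]$, and this is where the proof hinges.

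Finally I record the auxiliary properties of $\eta^{\lambda}$. It is nowhere zero because $\mathrm{Ad}_{\phi^{\lambda,q}_{V}}$ is an isomorphism and the $(1,0)$-part $\lambda^{-1}\eta^{1,0}$ of $\xi_{\lambda}$ is nowhere zero. For $\lambda\in S^{1}$ the connection $d^{\lambda,q}_{V}$ is real, so $\phi^{\lambda,q}_{V}$ may be chosen real; since $\overline{\lambda}=\lambda^{-1}$ and $\overline{\eta^{1,0}}=\eta^{0,1}$, the form $\xi_{\lambda}$, hence $\eta^{\lambda}$, is real. The spectral deformation carries $(\Lambda^{1,0},\Lambda^{0,1})$ to $(\phi^{\lambda,q}_{V}\Lambda^{1,0},\phi^{\lambda,q}_{V}\Lambda^{0,1})$, and therefore $\Lambda^{(1)}=\Lambda^{1,0}+\Lambda^{0,1}$ to $(\Lambda^{\lambda}_{q})^{(1)}$, so $\eta^{\lambda}$ takes values in $\phi^{\lambda,q}_{V}(\Lambda)\wedge\phi^{\lambda,q}_{V}(\Lambda^{(1)})=\Lambda^{\lambda}_{q}\wedge(\Lambda^{\lambda}_{q})^{(1)}$. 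By the Lemma of \cite{BDPP},\cite{BS}, $(\Lambda^{\lambda}_{q},\eta^{\lambda})$ is an isothermic surface for every $\lambda\in S^{1}$, which is the assertion.
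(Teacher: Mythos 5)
Your proposal is correct and follows essentially the same route as the paper: the same re-weighted form $\lambda^{-1}\eta^{1,0}+\lambda\,\eta^{0,1}$ transported by $\mathrm{Ad}_{\phi^{\lambda,q}_{V}}$, with closedness reduced to $d^{\mathcal{D}}\eta=0=[\mathcal{N}\wedge\eta]$ via Remark \ref{deta0} and Proposition \ref{withvswithoutdecomps}, and the bracket terms involving $q$ killed by the fact that $\Lambda\wedge\Lambda^{(1)}$ is abelian. The only cosmetic difference is that you verify the three terms of $d^{d^{\lambda,q}_{V}}\xi_{\lambda}$ vanish directly, where the paper invokes the splitting \eqref{eq:SSperpparallelornot}; the content is identical.
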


Next we derive it in our setting.
\begin{proof}
Suppose $(\Lambda,\eta)$ is an isothermic $q$-constrained Willmore
surface, for some $\eta,q\in\Omega^{1}(\Lambda\wedge\Lambda^{(1)})$.
Fix $\lambda\in S^{1}$ and
$\phi^{\lambda}_{q}:(\underline{\R}^{n+1,1},d^{\lambda}_{q})\rightarrow
(\underline{\R}^{n+1,1},d)$ an isometry preserving connections. Set
$$\eta_{\lambda}:=\lambda^{-1}\eta^{1,0}+\lambda\,\eta^{0,1}.$$To
prove the theorem, we show that
$(\phi^{\lambda}_{q}\Lambda,\mathrm{Ad}_{\phi^{\lambda}_{q}}\eta_{\lambda})$
is isothermic. Obviously, the reality of $\eta$ establishes that of
$\mathrm{Ad}_{\phi^{\lambda}_{q}}\eta_{\lambda}$. Recall
\eqref{eq:dlambdacoincideswithdonLambda} to conclude that
$$(\phi^{\lambda}_{q}\Lambda)^{(1)}=\phi^{\lambda}_{q}\Lambda^{(1)}$$
and, therefore, that
$\mathrm{Ad}_{\phi^{\lambda}_{q}}\eta_{\lambda}$ takes values in
$\phi^{\lambda}_{q}\Lambda\wedge (\phi^{\lambda}_{q}\Lambda)^{(1)}$.
According to \eqref{eq:LieLambdawedgeLambda1}, we have $$[q^{1,0}\wedge\eta^{0,1}]=0=[q^{0,1}\wedge \eta^{1,0}]$$ and,
therefore,
\begin{eqnarray*}
d^{d^{\lambda}_{q}}\eta_{\lambda}&=&d^{\mathcal{D}}\eta_{\lambda}+[(\lambda^{-1}\mathcal{N}^{1,0}+\lambda\mathcal{N}^{0,1}+(\lambda^{-2}-1)q^{1,0}+(\lambda^{2}-1)q^{0,1})\wedge\eta_{\lambda}]
\\&=&d^{\mathcal{D}}\eta_{\lambda}+[\mathcal{N}\wedge\eta].
\end{eqnarray*}
According to the decomposition \eqref{eq:SSperpparallelornot}, we
conclude that
$$d(\mathrm{Ad}_{\phi^{\lambda}_{q}}\eta_{\lambda})=\phi^{\lambda}_{q}\circ
d^{d^{\lambda}_{q}}\eta_{\lambda}\circ (\phi^{\lambda}_{q})^{-1}$$ 
vanishes if and only if
$d^{\mathcal{D}}\eta_{\lambda}=0=[\mathcal{N}\wedge\eta]$. Remark
\ref{deta0} and Lemma \ref{withvswithoutdecomps} complete the proof.
\end{proof}

As for B\"{a}cklund transformation of isothermic constrained Willmore surfaces,
we believe it does not necessarily preserve the isothermic condition. This shall be the subject of further work. 

A very important subclass of isothermic constrained Willmore surfaces is the class of constant mean curvature surfaces in $3$-dimensional space-forms. The constancy of the mean curvature of a surface in $3$-dimensional
space-form is preserved by both constrained Willmore spectral
deformation, cf. \cite{SD}, and constrained Willmore B\"{a}cklund transformation, cf. \cite{thesis}, for special choices of
parameters, with preservation of both the space-form and the mean
curvature in the latter case. However, constant mean curvature surfaces are not conformally-invariant objects, requiring that we carry a distinguished space-form. This shall be the subject of a forthcoming paper. See \cite[Section~8.2]{thesis} and \cite{QS} for further details.

\end{document}